\newtheorem{definition}{Definition}[section]
\let\originaleqref=\eqref
\numberwithin{equation}{section}
\renewcommand{\eqref}{equation~\originaleqref}
\newtheorem{theorem}{THEOREM}[section]
\newtheorem{lemma}[theorem]{Lemma}
\title{Approximation of rigid obstacle by highly viscous fluid}
\author{Sadokat Malikova}
\affil{University of Warsaw,\\Institute of Applied Mathematics and Mechanics,\\  ul. Banacha 2, Warsaw, Poland\\ \smallskip sadokat.malikova@mimuw.edu.pl}
\date{}
\begin{document}

\maketitle
\begin{abstract} {In this paper, we study the problem concerning the approximation of a rigid obstacle for flows governed by the stationary Navier-Stokes equations in the two-dimensional case. The idea is to consider a highly viscous fluid in the place of the obstacle.
Formally, as the fluid viscosity goes to infinity inside the region occupied by the obstacle, we obtain the original problem in the limit.

The main goal is to establish a better regularity of approximate solutions. In particular, the pointwise estimate for the gradient of the velocity is proved.
 We give numerical evidence that the penalized solution can reasonably approximate the problem, even for relatively small values of the penalty parameter. 

  }
\end{abstract}
\noindent \textbf{MSC:} 35Q35, 35Q30, 76D05, 76D03.\\ 
\smallskip
\noindent \textbf{Keywords:} stationary Navier-Stokes system, stationary Stokes problem, weak solutions, tangential regularity, discontinuous viscosity, penalty method, Bogovskii approach.

\section{Introduction}

We are interested in approximation of rigid obstacle by highly viscous fluid. We prove that the weak solution to the approximate problem has some higher regularity. Moreover, we claim that the approximate problem tends to be a rigid obstacle problem in the high viscosity limit. Finally, we illustrate our claims using numerical simulations, showing that our approach is applicable in numerics. This approach should in particular enable us to approximate rough obstacles. We show numerical results for such case, leaving its theoretical analysis for future work. 
  
   Let $\Omega \subset \mathbb{R}^2 $ be an open bounded domain of class $C^2$. Suppose that $\Omega \setminus S $ filled with a homogeneous viscous incompressible fluid with an obstacle inside (Fig.\ref{fig:Flow1}). Denote by $ S \subset\Omega$ the domain occupied by the rigid obstacle, $f$ is a given vector function. To simplify calculations we assume the density $\rho=1$. 

Our approach to the problem is based on the penalization method. We represent rigid obstacle as a highly viscous fluid. Consider an approximate stationary Navier-Stokes system 
\begin{equation}\label{NS}
 (u\cdot\nabla)u -{\rm div\,}\left[\nu(x)\mathbf{D} u\right] +\nabla{p} =f
\end{equation}
\begin{equation}\label{NSdiv} 
{\rm div\,}u=0
\end{equation}
that satisfies the boundary condition
\begin{equation}\label{eq:bc}
u=0 \,\,\textrm{at}\,\, \partial\Omega ,
\end{equation}
where the kinematic viscosity $\nu(x)$ is a discontinuous function that has the following structure
\begin{equation}\label{visc}
\nu(x)= \begin{cases}
1, & x\in{\Omega\setminus{S}},\\
m, & x\in{S},
\end{cases}
\end{equation} 
and $\mathbf{D}u$ is the deformation rate tensor with the components:  
 \[\mathbf{D_{ij} }u=\frac{1}{2}\left( \frac{\partial u_i}{\partial x_j}+\frac{\partial u_j}{\partial x_i}\right). \]

 Since we interested in weak solutions, we introduce integral formulation of the problem:
  we say that a vector function $u\in V(\Omega)$ is a solution to the problem if the following integral identity 

\begin{equation}\label{weakNS}
\int_{\Omega}((u\cdot\nabla)u)w\,dx+\\
\int_{\Omega}\nu{\mathbf{D}(u)}:\mathbf{D}(w)\,dx=\int_{\Omega} f\cdot w\,dx
\end{equation}
holds true for arbitrary $w  \in V(\Omega)$, where $V$ is defined at the beginning of Section 2.

In this paper, we want to prove that the gradient of the velocity field of the approximate problem (\ref{NS})-(\ref{NSdiv}) has a pointwise estimate in $L^{\infty}$ norm.
% as discontinuous viscosity demands an extra regularity of weak solutions. 

%An important motivation for considering this type problems are modelled by Navier-Stokes equations is that if in such a setting one has highly viscous flows, then the elliptic equations with discontinuous viscosity are reasonable model problems. 

%The rigid obstacle is represented as of very viscous fluid and . 
The first method of our study is the penalty method that we gained from V. Starovoitov's work [\ref{staravoitov}]. The method considers the rigid obstacles as fluids whose viscosity tends to infinity. The author presented the application of the method to the classical problem of rigid obstacles in a viscous incompressible fluid. In this direction, we mention here the papers [\ref{hoffman}] by K.H. Hoffmann and V. Starovoitov, [\ref{sanmartin}] by A. San Martin, V. Starovoitov, and Tucsnak, [\ref{Aneta}] by A. Wr\'oblewska-Kami\'nska, who used the penalty method to prove the global existence of weak solutions. 
%That will lead us to a future goal that is approximating the shape of the rigid body up to spikes.
 
  Also, we use tangential regularity techniques that are borrowed from J.-Y. Chemin [\ref{chemin}], R. Danchin, F. Fanneli and M. Paicu [\ref{danchin}]. They proved the existence and of weak solutions for the problem with the jump density for the barotropic compressible Navier-Stokes equations. In the case where the density has tangential regularity with respect to some non-degenerate family of vector fields, they also have uniqueness. Furthermore, this method helps us to approximate non-smooth domains with information about the geometry of the obstacle.
%In [\ref{danchin}], the authors assumed that viscosity has better tangential regularity along with a non-degenerate family of vector fields.  

  Moreover, we aimed to show that (\ref{NS})-(\ref{visc}) may have some practical applications in numerics. We mention here M.Dryja's work [\ref{dryja}], where he analyzed the elliptic problem with highly discontinuous coefficients. The author shows that the convergence of the method (DG) presented in the work is almost optimal and only weakly depends on the jumps of coefficients.

\begin{figure}
\centering
  \includegraphics[width=80mm]{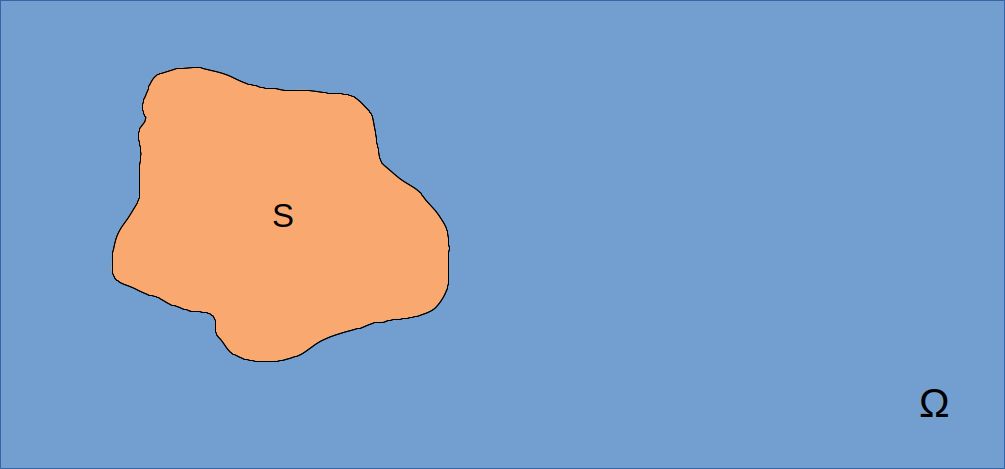}
  \caption{Fluid and rigid obstacle domains.}
  \label{fig:Flow1}
\end{figure}

 The paper is organized as follows. In section 2 we state our main results, Theorem \ref{thm_exis} (existence of weak solutions), Theorem \ref{limit} (the limit case), Theorem \ref{thm_main} (higher regularity of approximate solution) and introduce some general notations. Section 3 is devoted to the analysis of the Stokes system. In subsection 3.1  we introduce a regularity result for the model case, and the higher regularity of the Stokes system is proved in subsection 3.2.
   Section 4 is the core of this paper. There we introduce the tangential regularity results that are needed in the proof of Theorem 2.3. In section 5 we present numerical evidence that the approximate problem may have some practical applications.  
   % evidence that the regularized solutions can
%approximate the limiting case reasonably well, even for relatively small values of the penalty parameter.
   
   The proof of existence of weak solutions and their high viscosity limit (Theorems 2.1  and 2.2) are postponed in the Appendix. These results are based on the Bogovskii type approach and general theory.

\section{Notations and Main results }

In this section, we introduce some more general notation. In order to define spaces of divergence-free vector functions we introduce
$$
\mathcal{V} =\{ v\in {{C}_{0}^{\infty}(\Omega,\mathbb{R}^{2})}\,\vert\,{{\rm div\,}{v}=0}\}, \quad
V(\Omega)\; \textrm{is the closure of} \; \mathcal{V}(\Omega)\; {\rm in}\; H^{1}_0(\Omega).
$$

%$H(\Omega)$ is the closure of $\mathcal{V}(\Omega)$ in $L^{2}(\Omega)$.\\
According to classical result [\ref{temam}] for $\Omega$ an open Lipschitz set we have
$$V(\Omega)=\lbrace v\in{H^{1}_{0}}\,\vert\, {\rm div\,} v =0 )\rbrace .$$
Next we define
$$ 
H(\Omega)=\lbrace v\in L^{2}\,\vert\, {\rm div\,} v =0\;\textrm{in}\; \mathcal{D}^{'}(\Omega),\; v\cdot n =0\; \textrm{in}\; H^{-\frac{1}{2}}(\partial\Omega)\rbrace
.$$
The space $H$ is equipped by scalar product $(\cdot,\cdot)$, and the space $V$ is the Hilbert space with the scalar product 
 \[ ((u,v))=\sum_{i=1}^{n}(\nabla_{i}u,\nabla_{i}v).\] \\
By $"\cdot"$ we denote the scalar product of two vectors,
\[\xi\cdot \eta =\sum_{i=1}^{n}\xi_{i}\eta_{i},\]
for ${\xi}=\{\xi_{1},\xi_{2},...,\xi_{n}\}\in{\mathbb{R}^n}$ and ${\eta}=\{\eta_{1},\eta_{2},...,\eta_{n}\}\in{\mathbb{R}^n}$  and ":" stands for scalar product for two tensors,
\[ \xi : \eta =\sum_{i,j=1}^{n}\xi_{i,j}\eta_{i,j},\]\\
for $\xi =\{\xi_{i,j}\}_{i=1,...,n,j=1,...,n}\in{\mathbb{R}^{n\times{n}}}$ and $\eta =\{\eta_{i,j}\}_{i=1,...,n,j=1,...,n}\in{\mathbb{R}^{n\times{n}}}$.\\
Moreover, $f\in L^2(\mathbb{R}_{x_2};L^2(\mathbb{R}_{x_1}))$ stands notation of 1-dimensional space in $x_1$ and $x_2$ directions respectively, and  $f\in H^1 (\mathbb{R}_{x_2};H^1 (\mathbb{R}_{x_1}))$ means that $f, f_{x_2}\in L^2(\mathbb{R}_{x_2};H^1 (\mathbb{R}_{x_1}))$ i.e.
\begin{equation}\label{1-dimsp}
 \Vert f \Vert_{L^2(\mathbb{R}_{x_2};L^2 (\mathbb{R}_{x_1}))}:=\left[ \int_{\mathbb{R}_{x_2}}\left( \int_{\mathbb{R}_{x_1}}\vert f(x_{1},x_{2})\vert^2\, dx_{1}\right) dx_{2}\right]^{\frac{1}{2}} 
\end{equation}

\begin{equation}\label{1-Dsp}
\Vert f \Vert_{L^2(\mathbb{R}_{x_2};H^1 (\mathbb{R}_{x_1}))}:=\left[ \int_{\mathbb{R}_{x_2}}\left( \int_{\mathbb{R}_{x_1}}\vert f(x_{1},x_{2})\vert^2+\vert f_{x_1}(x_{1},x_{2})\vert^2\, dx_{1}\right) dx_{2}\right]^{\frac{1}{2}}  .
\end{equation}

%\textbf{ Tangential regularity.} Also, we give some definitions borrowed from works of Chemin in [\ref{chemin}] and Danchin in [\ref{danchin}].
% First of all, for any $p$ in $[n,\infty]$, we denote by $\mathbb{L}^{\infty ,p}$ the space of all continuous and bounded functions with gradient in $L^{p}(\mathbb{R}^n)$.
% For a given smooth vector-field $Y$, we are interested in the regularity of %a function $f$ along $Y$, i.e. in the quantity
%$$\partial_{Y} f\,:=\,\sum_{j=1}^{n} Y^{j}\partial_{j} f\, .$$

%Recall (\ref{visc}), we could represent $\nu(x)$ %as follows
% Denote by $\chi_{S}$ characteristic function of the domain $S$ 
%\begin{equation}
%\chi_{S}= \begin{cases}
%1, &\mathbf{x}\in{S}\\
%0, &\mathbf{x}\in{\Omega\setminus{S}}\, ,
%\end{cases}
%\end{equation} 
%due to (Starovoitov[8])
%\begin{equation}
%\nu=\nu_{\star}(1+\frac{1}{\varepsilon}\chi_{S}),
%\end{equation}
%where $\epsilon$ is a small dimensionless parameter and $\nu_{\star}$ is a constant. 
%Basing on works of Starovoitov it is enough to assume that $S$ is a domain with Lipschitz boundary to prove the existence of weak solutions.
Due to assumed regularity of $\Omega$ and $S$, there exists a vector field $X\in C^2$, such that  
\begin{equation} \label{X}
X\cdot\tau = 1, \; X\cdot n = 0 \quad {\rm on}\; \partial S  \cup \partial \Omega. 
\end{equation}

 The first result concerns existence of weak solutions [\ref{temam}] reads 
\begin{theorem}[Existence]\label{thm_exis}
Let $\Omega \subset \mathbb{R}^2 $ be an open bounded domain of class $C^2$, let f be given in $L^2(\Omega)$. Then Problem (\ref{weakNS}) has at least one solution $u\in V(\Omega)$ and there exist a function $p\in L^2(\Omega)$ such that  (\ref{NS}) are satisfied. 
\end{theorem}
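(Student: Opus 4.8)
The plan is to prove Theorem~\ref{thm_exis} by the classical Galerkin method for the stationary Navier--Stokes system (cf. [\ref{temam}]); the discontinuity of the viscosity $\nu$ causes no difficulty at this level, since it enters the argument only through the pointwise bound $\nu(x)\ge 1$ a.e. in $\Omega$. Concretely, I would pick a countable basis $\{w_k\}_{k\ge1}$ of the separable Hilbert space $V(\Omega)$ and, for each $N$, look for $u_N=\sum_{k=1}^{N}c^N_k w_k$ such that
\begin{equation}\label{galerkin}
\int_{\Omega}((u_N\cdot\nabla)u_N)\cdot w_j\,dx+\int_{\Omega}\nu\,\mathbf{D}(u_N):\mathbf{D}(w_j)\,dx=\int_{\Omega}f\cdot w_j\,dx,\qquad j=1,\dots,N.
\end{equation}
Solving \eqref{galerkin} amounts to finding a zero of a continuous vector field on $\mathbb{R}^N$, which exists by the usual corollary of Brouwer's fixed point theorem, once one has the a priori bound below.

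\smallskip
The a priori estimate is the heart of the matter. Testing \eqref{galerkin} with $w=u_N$, the convective term vanishes since $\int_{\Omega}((v\cdot\nabla)v)\cdot v\,dx=0$ for every $v\in V(\Omega)$ (integrate by parts, using ${\rm div}\,v=0$ and $v|_{\partial\Omega}=0$); moreover on $V(\Omega)$ one has $2\int_{\Omega}|\mathbf{D}(v)|^2\,dx=\int_{\Omega}|\nabla v|^2\,dx$ (integrate $\partial_i v_j\,\partial_j v_i$ by parts). Using $\nu\ge1$,
\begin{equation}\label{apriori}
\tfrac12\|\nabla u_N\|_{L^2(\Omega)}^2\le\int_{\Omega}\nu\,|\mathbf{D}(u_N)|^2\,dx=\int_{\Omega}f\cdot u_N\,dx\le\|f\|_{L^2(\Omega)}\,\|u_N\|_{L^2(\Omega)},
\end{equation}
and the Poincaré inequality gives $\|u_N\|_{V(\Omega)}\le C(\Omega)\,\|f\|_{L^2(\Omega)}$, a bound independent of $N$ and --- importantly for Theorem~\ref{limit} --- of $m$.

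\smallskip
To pass to the limit, I would extract a subsequence $u_N\rightharpoonup u$ weakly in $V(\Omega)$; since $\Omega\subset\mathbb{R}^2$ is bounded, $V(\Omega)\hookrightarrow L^4(\Omega)$ compactly, so $u_N\to u$ strongly in $L^4(\Omega)$. The linear viscous term passes to the limit by weak $H^1$ convergence, and the convective term by writing it as $-\int_{\Omega}((u_N\cdot\nabla)w)\cdot u_N\,dx$ and combining strong $L^4$ convergence of $u_N$ with $\nabla w\in L^2(\Omega)$; hence $u$ solves \eqref{weakNS} for all $w$ in the basis, and for all $w\in V(\Omega)$ by density. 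The pressure is then recovered from the functional $\langle F,w\rangle:=\int_{\Omega}f\cdot w\,dx-\int_{\Omega}((u\cdot\nabla)u)\cdot w\,dx-\int_{\Omega}\nu\,\mathbf{D}(u):\mathbf{D}(w)\,dx$ on $H^1_0(\Omega)$, which is bounded (use $u\in L^4$, $\nu\in L^\infty$) and vanishes on $V(\Omega)$: by de Rham's theorem --- equivalently, by surjectivity of ${\rm div}\colon H^1_0(\Omega)\to L^2_0(\Omega)$, i.e. the Bogovskii estimate on the $C^2$ domain $\Omega$ --- there is $p\in L^2(\Omega)$, unique up to an additive constant, with $\langle F,w\rangle=\int_{\Omega}p\,{\rm div}\,w\,dx$ for all $w\in H^1_0(\Omega)$, which is precisely \eqref{NS} in the sense of distributions.

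\smallskip
I do not expect a genuine obstacle here, as this is essentially a textbook result; the only points that call for some care are the compactness step for the nonlinear term (where two-dimensionality and boundedness of $\Omega$ are used) and the de Rham/Bogovskii argument for the pressure on the $C^2$ domain $\Omega$. The one feature worth stressing is that the bound \eqref{apriori} is completely insensitive to the penalty parameter $m$, which is exactly what will make this scheme reusable in the high-viscosity limit of Theorem~\ref{limit}.
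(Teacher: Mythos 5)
Your proposal is correct and follows essentially the same route as the paper's Appendix proof: a Galerkin scheme solved via Brouwer's fixed point theorem, the $m$-independent energy estimate obtained by testing with $u_N$ and using the vanishing of the trilinear form together with a Korn-type identity, compactness to pass to the limit in the convective term, and de Rham/Bogovskii to recover $p\in L^2(\Omega)$. The only (harmless) differences are cosmetic: you pass to the limit in the nonlinearity via strong $L^4$ convergence, while the paper uses strong $L^2$ convergence against smooth basis functions, and you make the pressure-recovery step explicit where the paper delegates it to its Bogovskii lemma and the references.
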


 The next result is about the limit $m\longrightarrow \infty$.
\begin{theorem}[The limit]\label{limit}
Let assumptions of Theorem \ref{thm_exis} hold and let $m\longrightarrow \infty$, then $\mathbf{D}u=0$  in the domain $S$. 
\end{theorem}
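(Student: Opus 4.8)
The plan is to show that as the viscosity parameter $m$ tends to infinity, the penalty term in the energy estimate forces the deformation tensor to vanish on $S$. First I would derive a uniform (in $m$) a priori bound on the family of weak solutions $u_m \in V(\Omega)$. Testing the weak formulation \eqref{weakNS} with $w = u_m$, the convective term $\int_\Omega ((u_m\cdot\nabla)u_m)\cdot u_m\,dx$ vanishes because $\mathrm{div}\,u_m = 0$ and $u_m = 0$ on $\partial\Omega$ (the standard antisymmetry of the trilinear form), so we are left with
\begin{equation}\label{eq:energyid}
\int_{\Omega\setminus S} |\mathbf{D}u_m|^2\,dx + m\int_S |\mathbf{D}u_m|^2\,dx = \int_\Omega f\cdot u_m\,dx .
\end{equation}
Using Korn's inequality together with the Poincar\'e inequality on $V(\Omega)$, the left-hand side dominates $c\,\|u_m\|_{H^1_0(\Omega)}^2$ (even without the large factor $m$, just keeping the $\Omega\setminus S$ part plus a fixed share of the $S$ part), while the right-hand side is bounded by $\|f\|_{L^2}\|u_m\|_{L^2} \le C\|f\|_{L^2}\|u_m\|_{H^1_0}$. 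Hence $\|u_m\|_{H^1_0(\Omega)} \le C\|f\|_{L^2(\Omega)}$ with $C$ independent of $m$.

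Next I would feed this uniform bound back into \eqref{eq:energyid}. Since $\int_\Omega f\cdot u_m\,dx \le C\|f\|_{L^2}^2$ uniformly in $m$, and the first term on the left is nonnegative, we get
\begin{equation}\label{eq:penaltybound}
m\int_S |\mathbf{D}u_m|^2\,dx \le C\|f\|_{L^2(\Omega)}^2 ,
\end{equation}
so that $\int_S |\mathbf{D}u_m|^2\,dx \le C/m \to 0$ as $m\to\infty$. By the uniform $H^1_0$ bound we may extract a subsequence $u_m \rightharpoonup u$ weakly in $H^1_0(\Omega)$; since $u \mapsto \mathbf{D}u$ is linear and continuous, $\mathbf{D}u_m \rightharpoonup \mathbf{D}u$ weakly in $L^2(\Omega)$, and in particular $\mathbf{D}u_m\rightharpoonup \mathbf{D}u$ weakly in $L^2(S)$. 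Weak lower semicontinuity of the $L^2(S)$ norm then gives
\[
\int_S |\mathbf{D}u|^2\,dx \le \liminf_{m\to\infty} \int_S |\mathbf{D}u_m|^2\,dx \le \liminf_{m\to\infty}\frac{C}{m} = 0 ,
\]
whence $\mathbf{D}u = 0$ a.e. in $S$, which is the claimed conclusion.

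I expect the only genuinely delicate point to be making precise in what sense "$m\to\infty$" is taken in the statement, i.e. that it refers to passing to the limit along the family of solutions and identifying the limit; the rest is the standard energy method. One should also note that the uniform Korn inequality is available here because all $u_m$ share the zero boundary condition on $\partial\Omega$, so no $m$-dependent constants sneak in. If the paper instead wants the stronger conclusion that the limit $u$ is itself a rigid velocity field on each connected component of $S$ (i.e. $u(x) = a + b\,x^\perp$ there), that follows from $\mathbf{D}u = 0$ on $S$ by the classical characterization of the kernel of the deformation operator on a connected open set, but the statement as written only asks for $\mathbf{D}u = 0$ in $S$, so the argument above suffices.
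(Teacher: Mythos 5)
Your proposal is correct and follows essentially the same route as the paper: the paper's proof also rests on the energy identity $m\int_S|\mathbf{D}u|^2\,dx\leqslant \Vert f\Vert_{L^2}\Vert u\Vert_{L^2}$ (derived from \eqref{exisEst}) and concludes directly that $\mathbf{D}u\to 0$ in $S$ as $m\to\infty$. Your version is in fact more careful than the paper's, since you make the limit passage precise by extracting a weakly convergent subsequence in $H^1_0(\Omega)$ and invoking weak lower semicontinuity of the $L^2(S)$ norm, a step the paper leaves implicit.
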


When $\mathbf{D}u=0$ in $S$ we obtain rigid motion. The regularity of $u$ implies that the trace of $u\vert_{\partial S}= \phi $ is $\phi \in H^{\frac{1}{2}}(\partial S)$.  In situation when the obstacle touches the boundary, (\ref{eq:bc}) together with Theorem \ref{limit} implies $\Vert \phi \Vert_{H^{\frac{1}{2}}(\partial S)}\longrightarrow 0$.  

The higher regularity of the approximate problem plays a crucial role in the proof of uniqueness.  Generally, if the solution has tangential regularity with respect to some vector field, we get uniqueness. The main result of this paper is stated as follows

\begin{theorem}\label{thm_main} Let $\Omega \subset \mathbb{R}^2 $ be an open bounded domain of class $C^2$, $S\subset \Omega$ with the boundary $\partial{S}\in C^2 $, and let  ${f}\in H^{1} (\Omega)$, then for every solution $u$ of Navier-Stokes equations (\ref{NS})-(\ref{NSdiv}) we have
 $\nabla u\in L^{\infty}(\Omega)$.
 % the assumptions of Lemma (\ref{l_PropNS}) holdsthe assumptions of Lemma (\ref{l_PropNS}) holds
\end{theorem}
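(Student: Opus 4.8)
The plan is to establish $\nabla u \in L^\infty(\Omega)$ by combining the penalization structure with tangential (conormal) regularity along the interface $\partial S$. The fundamental difficulty is that the viscosity $\nu$ jumps across $\partial S$, so one cannot expect $u \in W^{2,p}$ globally; instead the second-order regularity degenerates in the direction normal to $\partial S$, while the tangential direction stays good. The $L^\infty$ bound on $\nabla u$ in two dimensions is then recovered from an anisotropic embedding: a gradient that lies in $L^2$ and whose tangential derivative along a $C^2$ family of non-degenerate vector fields lies in $L^2$ as well, together with a uniform $L^p$ control for some $p > 2$, embeds into $L^\infty$ by the kind of logarithmic/Gagliardo--Nirenberg estimate used by Chemin and by Danchin--Fanelli--Paicu for striated regularity.

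\smallskip

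\noindent\textbf{Step 1 (Linearization and fixed background).} First I would freeze the convective term: given the weak solution $u \in V(\Omega)$ from Theorem~\ref{thm_exis}, set $g := f - (u\cdot\nabla)u$. By Sobolev embedding in 2D, $u \in L^q(\Omega)$ for all $q < \infty$ and $\nabla u \in L^2$, so $(u\cdot\nabla)u \in L^r(\Omega)$ for some $r>1$; after the improved Stokes regularity of Section~3 is bootstrapped once, $u$ will be good enough that $g \in L^2$, and ultimately $g$ inherits enough regularity from $f \in H^1$ to run the tangential estimates. Thus the problem reduces to the \emph{linear} discontinuous-viscosity Stokes system $-{\rm div}(\nu\, \mathbf{D}u) + \nabla p = g$, ${\rm div}\, u = 0$, $u|_{\partial\Omega}=0$, which is exactly the object treated in Section~3.

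\smallskip

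\noindent\textbf{Step 2 (Interior and near-boundary $W^{1,p}$, $p>2$).} Using the model-case regularity result of subsection~3.1 and a localization/flattening argument near $\partial S$ and $\partial\Omega$, I would upgrade $u$ from $H^1$ to $W^{1,p}$ for some $p>2$, uniformly (the $m$-dependence being controlled because the estimates from [\ref{staravoitov}] and [\ref{dryja}] depend only weakly on the jump). Away from $\partial S$ the viscosity is smooth, so classical Stokes theory gives $u \in W^{2,p}_{loc}$ there; the only trouble is a neighborhood of the interface.

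\smallskip

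\noindent\textbf{Step 3 (Tangential/striated regularity — the core).} This is the heart of the argument and the main obstacle. Let $X \in C^2$ be the vector field from \eqref{X}, tangent to $\partial S$ and $\partial \Omega$. I would differentiate the Stokes system along $X$, i.e. study $v := X\cdot\nabla u$ (more precisely the commutator $[\,X\cdot\nabla,\, -{\rm div}(\nu\,\mathbf{D}\cdot)]u$), and show that because $X$ is tangent to the jump set, the commutator does \emph{not} see the distributional derivative of $\nu$: the worst term $({\rm div}\,\nu)$-type contribution is supported on $\partial S$ but contracted against the tangential field, hence vanishes, leaving only lower-order terms that are already controlled by Steps 1--2. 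This yields a uniform bound $X\cdot\nabla u \in H^1(\Omega)$, equivalently the conormal/striated second derivative $\nabla(X\cdot\nabla u) \in L^2$, with the pressure handled by a Bogovskii-type right inverse of the divergence to absorb ${\rm div}\, u = 0$ constraints when testing. One must also check the non-degeneracy of the one-vector-field family $\{X\}$ — in 2D a single nowhere-tangent-to-itself field that is transverse-complemented by $\nabla^\perp$ of a defining function suffices — and keep careful track that all constants are independent of $m$.

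\smallskip

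\noindent\textbf{Step 4 (Anisotropic embedding to $L^\infty$).} Finally I would invoke the 2D striated-regularity embedding (à la Chemin [\ref{chemin}], Danchin--Fanelli--Paicu [\ref{danchin}]): if $w \in L^2(\Omega)$, $w \in W^{1,p}$ for some $p>2$, and the tangential derivative $X\cdot\nabla w \in L^2$ along the non-degenerate field $X$, then $w \in L^\infty(\Omega)$ with a logarithmic-type bound $\|w\|_{L^\infty} \lesssim \|w\|_{W^{1,p}}\log\bigl(e + \|X\cdot\nabla w\|_{H^1}/\|w\|_{W^{1,p}}\bigr)$ or a cleaner Gagliardo--Nirenberg variant. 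Applying this with $w = \nabla u$ (componentwise), Steps 2 and 3 give exactly the hypotheses, so $\nabla u \in L^\infty(\Omega)$, uniformly in $m$. I expect the delicate points to be: (i) the commutator computation in Step 3, making rigorous that tangency kills the jump term and that the pressure term is genuinely harmless; and (ii) verifying that the fixed-point/bootstrap on the nonlinear term $(u\cdot\nabla)u$ closes, i.e. that the regularity gained is enough to treat $g = f-(u\cdot\nabla)u$ as an $H^1$-type datum feeding back into the tangential estimate.
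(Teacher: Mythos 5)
Your Steps 1--3 are in the right spirit and overlap substantially with the paper's Lemmas \ref{lem_trSt}, \ref{l_hihgerRegSt}, \ref{l_PropNS} and \ref{higtan}: differentiating the system along the tangential field $X$, observing that tangency prevents the commutator from seeing the jump of $\nu$, and handling $\partial_X p$, $\partial_X^2 p$ by a Bogovskii right inverse is exactly what is done there. (One remark on Step 1: freezing the convective term and treating $g=f-(u\cdot\nabla)u$ as data is circular as written, since to run the tangential estimates you need $\partial_X g,\partial_X^2 g\in L^2$, i.e.\ tangential regularity of $u$ itself; the paper avoids this by propagating the full nonlinear system along $X$ and estimating the commutator terms directly via the $L^4$--BMO interpolation in Lemmas \ref{l_PropNS} and \ref{higtan}. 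This is repairable by a bootstrap, but it is not free.)

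The genuine gap is Step 4. The embedding you invoke --- ``$w\in W^{1,p}$ for some $p>2$ together with $X\cdot\nabla w\in L^2$ implies $w\in L^\infty$'' --- is vacuous as stated, because in two dimensions $W^{1,p}(\Omega)\hookrightarrow L^\infty(\Omega)$ for $p>2$ already; and applied to $w=\nabla u$ the hypothesis $\nabla u\in W^{1,p}$, $p>2$, is precisely what fails across $\partial S$: the normal derivative $\partial_n\nabla u$ is not in any $L^p$ uniformly near the interface, since it is the flux $\nu\,\mathbf{D}u$ (not $\mathbf{D}u$) that is continuous across the jump. If you weaken the hypothesis to what Steps 2--3 actually deliver (namely $\nabla u\in L^p$ plus tangential $H^1$-regularity of $\nabla u$), no such embedding holds --- you have no information at all on the normal derivative of $\nabla u$, and a function of $(y_1,y_2)$ that is merely $L^2$ in $y_2$ with values in $H^1(\mathbb{R}_{y_1})$ need not be bounded. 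The missing idea, which is the core of the paper's proof (the model case of Lemma \ref{lem_ModelCase} and its curvilinear version in the proof of Theorem \ref{thm_main}), is to return to the equation in divergence form after straightening $\partial S$: the PDE expresses $\partial_{y_2}\bigl[\nu(y_2)\bigl(g^{21}\partial_{y_1}U_1+g^{22}\partial_{y_2}U_1+R\bigr)\bigr]$ entirely through quantities that are controlled by the tangential regularity of Lemmas \ref{l_PropNS}--\ref{higtan}, so that this flux belongs to $H^1(\mathbb{R}_{y_2};H^1(\mathbb{R}_{y_1}))\hookrightarrow L^\infty(\mathbb{R}_{y_2};L^\infty(\mathbb{R}_{y_1}))$ by the one-dimensional embedding applied in each variable separately; one then divides by $\nu\,g^{22}$, which is bounded away from zero, to recover $\partial_{y_2}U_1\in L^\infty$, and argues similarly for the remaining components. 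Without this use of the equation to convert tangential regularity into normal regularity of the viscous stress, the tangential estimates of Step 3 cannot by themselves yield $\nabla u\in L^\infty$.
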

Due to technical difficulties, we split our analysis into two parts. We consider two approximate problems: Stokes and Navier-Stokes.
The method of our proof relies on tangential regularity results for the approximate problem. The proof of tangential regularity differs from Danchin [\ref{danchin}] and Chemin' [\ref{chemin}] works. The difficulty is that we propagate the whole approximate Navier-Stokes equations along the given vector field.

  We prove tangential regularity for the Stokes system, that is stated in Lemma \ref{lem_trSt}, where we derive energy estimate for $\partial_{X} u$.
In Lemma \ref{l_hihgerRegSt} we state the higher tangential regularity of Stokes system. In other words, we differentiate twice the whole system of equations along the given vector field $X$ and prove that $\nabla \partial_{X}^2 u\in L^2(\Omega)$. In the proof of tangential regularity results, we apply the Bogovskii type approach [\ref{galdi_book}] to the 'pressure terms' $\partial_{X}p$, $\partial^2_{X} p$.
   
   In subsection 3.1, we consider a model case where the problem reduces to $1$-dimensional functional spaces, which allows us to prove the higher regularity of a solution for the approximate Stokes system. The proof requires tangential regularity results and elementary tools like H\"older, Poincar\'e inequalities and embedding. 
    We apply the model case idea in the proof of Theorem \ref{thm_main}.

    We establish tangential and higher tangential regularity results for the approximate Navier-Stokes system (\ref{NS})-(\ref{NSdiv}) in Lemma \ref{l_PropNS} and Lemma \ref{higtan}. Proofs of these lemmas require Lemma \ref{lem_trSt} and Lemma \ref{l_hihgerRegSt}.

%In fact, our propagated problem became doubtful, which made the issue more difficult for us. However, we succeed in deriving energy estimates thanks to the cornerstone trick as Bogovskii type approach. 

The interesting part of our approach is the viscosity jump area. Thus, in the proof of Theorem \ref{thm_main}, we concentrate our analysis on $\Sigma$ domain which is the neighbourhood of the approximate obstacle boundary. We straighten out the boundary using a transition to the curvilinear coordinate system ([\ref{takahashi}]). We follow the model case idea. Tangential regularity results are the main steps in the proof of pointwise estimate for the gradient of the velocity field in $L^{\infty}$ norm.
We apply Lemma \ref{l_PropNS}, Lemma \ref{higtan} and use tools like H\"older, Poincar\'e inequalities and embedding.   

The proofs of Theorem \ref{thm_exis} and Theorem \ref{limit} are the most standard and rely on known results(see Appendix).

\section{Regularity for Stokes equations}
 In this section, we consider the approximate Stokes system of equations in a two dimensional case
\begin{equation}\label{stokes_eq}
-{\rm div\,}\left[\nu(x)\mathbf{D}u\right] +\nabla{p}=f
\end{equation}
\begin{equation}\label{div_eq}
{\rm div\,}u=0. 
\end{equation}

% where $\mathbf{D}u$ 2}- the deformation rate %tensor with the components:  
% \[\mathbf{D_{ij} u}=\frac{1}{2}\left( %\frac{\partial u_i}{\partial x_j}+\frac{\partial u_j}{\partial x_i}\right) \]
% and
%\[ f(x)\in{L^2(\Omega)},  \quad \partial\Omega}%\in{C^2(\Omega)}\]
% We consider, that kinematic viscosity is a jump %function and has following structure:
%\begin{equation}\label{visc}
%\nu(x)= \begin{cases}
%1, &\mathbf{x}\in{\Omega\setminus{S}}\\
%m, &\mathbf{x}\in{S}
%\end{cases}
%\end{equation}
Recall, that viscosity is a jump function (\ref{visc}).
%Where $S$ is the domain occupied by the approximate obstacle. 
The system of equations (\ref{stokes_eq})-(\ref{div_eq}) appends the condition at the boundary, that is,
\begin{equation}\label{eq:bcSt}
u=0\, \, \textrm{at}\,\, \partial\Omega .
\end{equation}
%Since $\Omega$ is bounded, it follows that the velocity field must satisfy the %\textit{compatibility condition}: 
%\begin{equation}
%%\int_{\partial\Omega}\mathbf{u_*}\mathbf{n}=0\, .
%\end{equation} 

\begin{definition} \label{def_Stokes}
A vector field $u:\Omega\longrightarrow \mathbb{R}^2 $ is called a weak (or generalized) solution to the Stokes problem (\ref{stokes_eq})- (\ref{div_eq}) if and only if $u\in V(\Omega)$ and it verifies the identity
\begin{equation}\label{eq:1.6_eq}
(\nu\mathbf{D} u,\mathbf{D}\phi)=-(f,\phi),\;\;\,\forall\,\phi \in V(\Omega) 
.\end{equation}
\end{definition}

The following lemma gives tangential regularity of the solution to Stokes problem, which will be useful in the proof of Lemma \ref{l_PropNS}. 
%The main idea of the proof is Lemma \ref{lem_bogovskii}.
\begin{lemma}\label{lem_trSt} Let $\Omega \subset \mathbb{R}^2 $ be an open bounded domain of class $C^2$, $S\subset \Omega$ with the boundary $\partial{S}\in C^2 $. 
Assume $X$ satisfies (\ref{X}) and 
$f,\partial_X f \in L^2 (\Omega)$. 
Then for every weak solution $u$ of (\ref{stokes_eq})-(\ref{div_eq}) we have
\begin{equation}\label{eq:tan_regSt}
\Vert\nabla \partial_X u\Vert_{L^2}\leqslant C_1 \left( \Vert f\Vert_{L^2}+\Vert \partial_X f\Vert_{L^2} \right) 
\end{equation}
where  $C_1 := C_1 (\Omega ,X)$. 
\end{lemma}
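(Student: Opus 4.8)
The plan is to derive an energy estimate for the tangential derivative $\partial_X u$ by differentiating the weak formulation \eqref{eq:1.6_eq} along the vector field $X$ and testing against $\partial_X u$ itself. First I would note that since $X \in C^2$ is tangent to both $\partial S$ and $\partial\Omega$, the transport operator $\partial_X = X\cdot\nabla$ maps (roughly speaking) $V(\Omega)$ into itself modulo lower-order commutator terms, and in particular $\partial_X u$ vanishes on $\partial\Omega$ because $X$ is tangential there. The key point for handling the viscosity jump is that $\nu$ is \emph{constant along $X$} on a neighbourhood of $\partial S$ — more precisely $\partial_X \nu = 0$ in the sense that $X$ does not cross the interface $\partial S$ — so differentiating the term $(\nu \mathbf{D}u, \mathbf{D}\phi)$ along $X$ does not produce a term with $\partial_X \nu$ (which would be a measure supported on $\partial S$ and would be impossible to control). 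This is exactly why the tangential direction is the right one.

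The main steps, in order, would be: (i) formally apply $\partial_X$ to \eqref{eq:1.6_eq}; the second-order term yields $(\nu\mathbf{D}(\partial_X u), \mathbf{D}\phi)$ plus commutator terms of the form $(\nu\, [\partial_X, \mathbf{D}]u, \mathbf{D}\phi)$ and analogous terms where $\partial_X$ falls on the test function, all of which involve at most one derivative of $u$ times $\nabla X \in C^1$, hence are bounded by $C\Vert\nabla u\Vert_{L^2}\Vert\nabla\phi\Vert_{L^2}$; the right side becomes $(\partial_X f, \phi)$ plus a commutator $(f, (\mathrm{div}\,X)\phi - \dots)$ bounded by $C\Vert f\Vert_{L^2}\Vert\phi\Vert_{L^2}$. (ii) The difficulty is that $\partial_X u$ is not divergence-free: $\mathrm{div}(\partial_X u) = \partial_X(\mathrm{div}\,u) + [\mathrm{div}, \partial_X]u = (\nabla X):(\nabla u)^T \ne 0$ in general, so it is not an admissible test function and the pressure term $\partial_X(\nabla p,\phi)$ does not simply vanish. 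To fix this I would use the Bogovskii operator: solve $\mathrm{div}\, w = \mathrm{div}(\partial_X u)$ with $w \in H^1_0(\Omega)$ and $\Vert w\Vert_{H^1}\le C\Vert \mathrm{div}(\partial_X u)\Vert_{L^2}\le C\Vert\nabla u\Vert_{L^2}$, set $v := \partial_X u - w \in V(\Omega)$, use $v$ as the legitimate test function, and absorb all the $w$-contributions into the lower-order right-hand side. (iii) After choosing $\phi = v = \partial_X u - w$ in the differentiated identity, the principal term gives $(\nu\mathbf{D}(\partial_X u),\mathbf{D}(\partial_X u))\ge \min(1,m)\Vert\mathbf{D}(\partial_X u)\Vert_{L^2}^2$, which by Korn's inequality controls $\Vert\nabla\partial_X u\Vert_{L^2}^2$ (note $\partial_X u \in H^1_0$ so Korn applies cleanly). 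All remaining terms are bounded by $C(\Vert f\Vert_{L^2}+\Vert\partial_X f\Vert_{L^2})\Vert\nabla\partial_X u\Vert_{L^2} + C\Vert\nabla u\Vert_{L^2}^2 + C\Vert f\Vert_{L^2}^2$. (iv) Finally I would absorb the $\Vert\nabla\partial_X u\Vert$ factor via Young's inequality and use the basic energy estimate $\Vert\nabla u\Vert_{L^2}\le C\Vert f\Vert_{L^2}$ (which follows from testing \eqref{eq:1.6_eq} with $u$ and Korn) to close the bound in the stated form, with $C_1$ depending only on $\Omega$ and $X$ (through $\Vert X\Vert_{C^2}$, the Bogovskii constant, and the Korn constant).

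The main obstacle is the one flagged in step (ii): making rigorous the manipulation "differentiate the weak form along $X$ and test with $\partial_X u$" when $\partial_X u$ is neither divergence-free nor obviously in $H^1$ a priori. The clean way is a difference-quotient / regularization argument: replace $\partial_X$ by the flow $\psi_h$ of $X$ and work with $(u\circ\psi_h - u)/h$, which \emph{is} available as a test function after a Bogovskii correction, derive the estimate uniformly in $h$, and pass to the limit $h\to 0$; the tangency of $X$ on $\partial\Omega$ guarantees $\psi_h$ preserves $\Omega$ so the boundary condition is respected. A secondary technical point is verifying that the commutator $[\partial_X,\mathbf{D}]$ and $[\partial_X,\mathrm{div}]$ really are zeroth-order in $u$ with coefficients controlled by $\Vert X\Vert_{C^2}$, and that the interface $\partial S$ genuinely contributes nothing because $X$ is tangent to it — both are routine once the geometry \eqref{X} is used, but they are where one must be careful to get a constant $C_1$ independent of the penalization parameter $m$ (here using $\min(1,m)=1$ for $m\ge1$).
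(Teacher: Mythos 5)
Your proposal is correct in its essentials and follows the same skeleton as the paper's argument: differentiate the system along $X$, observe that the tangency condition \eqref{X} makes the viscosity jump invisible (the distributional gradient of $\nu$ is a normal-directed measure on $\partial S$, and $X\cdot n=0$ there), note that $\partial_X u=\partial_\tau u$ vanishes on $\partial\Omega$ so Korn applies, control the commutators $[\partial_X,\mathbf{D}]u$ and $[\partial_X,{\rm div\,}]u$ as zeroth-order terms in $\nabla u$ with coefficients from $\nabla X$, and close with the basic energy estimate \eqref{2.3} and Young's inequality. The genuinely different choice is the treatment of the pressure. You eliminate it by a Bogovskii correction of the test function: solve ${\rm div\,}w={\rm div\,}(\partial_X u)$, test with $v=\partial_X u-w\in V(\Omega)$, and absorb the $w$-terms into the lower-order right-hand side. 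The paper instead tests directly with $\partial_X u$, which is not divergence-free by \eqref{div_X}, and pays for the surviving term $\int_\Omega\partial_X p\,{\rm div\,}(\partial_X u)\,dx$ by first establishing an $L^2$ bound on $\partial_X p$ through a duality/Bogovskii argument (\eqref{3.10}--\eqref{eq:sys2_eq}, yielding \eqref{eq:bogov2_eq}). Both routes work; yours is leaner for the present lemma, but the paper's version produces the bound on $\Vert\partial_X p\Vert_{L^2}$ as a by-product, and that bound is explicitly reused in the second-order Lemma \ref{l_hihgerRegSt} (in the estimate of $\int_\Omega\partial_X^2 p$ and in \eqref{bogov3}), so on your route it would still have to be derived separately later. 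Your difference-quotient regularization along the flow of $X$ is a genuine addition: the paper argues formally and never justifies a priori that $\partial_X u\in H^1$, so on this point you are more careful than the source.

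Two caveats, neither fatal. First, in your correction term $(\nu\mathbf{D}(\partial_X u),\mathbf{D}w)$ a weighted Cauchy--Schwarz leaves $\int_\Omega\nu|\mathbf{D}w|^2\,dx\le m\Vert\nabla w\Vert_{L^2}^2\le Cm\Vert\nabla u\Vert_{L^2}^2$, and the energy estimate controls only $\int_\Omega\nu|\mathbf{D}u|^2\,dx$, not $m\Vert\nabla u\Vert_{L^2(\Omega)}^2$; so the claimed independence of $C_1$ from $m$ does not follow as written. The paper's own bounds contain the analogous quantities $\Vert\nu\nabla u\Vert_{L^2}$ and $\Vert\nu\mathbf{D}u\Vert_{L^2}$ and has the same unaddressed issue, so this is not a defect relative to the source, but it is worth flagging. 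Second, even with a divergence-free test function the commutator pressure term $-\sum_i X^i_{,k}\partial_{x_i}p$ does not disappear; it must be integrated by parts against $v$ and bounded using $p\in L^2(\Omega)$ and \eqref{eq:bogov1}, exactly as the paper does.
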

\begin{proof}

  Assume that a given vector-field $X=(X^1,X^2)$ is sufficiently smooth, and we are interested in the regularity of function $u$ along $X$, i.e. in the quantity
  \[\partial_X u:=X^1 \partial_{x_1} u + X^2 \partial_{x_2} u .\]
  We take the derivative along tangential vector-field from Stokes equation, that is
  \begin{equation}\label{eq:st_eq}
  -\partial_X {\rm div\,}\left[\nu(x)\mathbf{D}u\right] +\partial_X \nabla{p}=   \partial_X f .
  \end{equation}
  
  In order to get an equation for $\partial_{X} u$, we are going to rewrite (\ref{eq:st_eq}). After some  calculations we get  
%  \begin{align*}
%  \partial_X {\rm div\,}\left[\nu(x)\mathbf{D}u\right]= \frac{1}{2}(X^1\partial_{x_1} +X^2\partial_{x_2})(\partial_{x_1},\partial_{x_2})\left[\nu(x) 
%  \begin{bmatrix}
%  2u^1_{,1}&u^1_{,2}+u^2_{,1}\\
%  u^2_{,1}+u^1_{,2}&2u^2_{,2}\\
%\end{bmatrix}   \right] \\
%\partial_X {\rm div\,}\left[\nu(x)\mathbf{D}u\right]=\frac{1}{2} \left( 
%\begin{bmatrix}
%(X^1\partial_{x_1} +X^2\partial_{x_2})\left[ \partial_{x_1} (\nu(x) 2u^1_{,1})+\partial_{x_2}(\nu(x)( u^1_{,2}+u^2_{,1}))\right] \\
%(X^1\partial_{x_1} +X^2\partial_{x_2})\left[\partial_{x_1}(\nu(x)( u^2_{,1}+u^1_{,2})) +\partial_{x_2} (\nu(x) 2u^2_{,2})\right]
%\end{bmatrix}\right)  \\
 %{\rm div\,}\left[\partial_X \left( \nu(x)\mathbf{D}u\right) \right]=(\partial_{x_1},\partial_{x_2})\left[\nu(x) (X^1\partial_{x_1} +X^2\partial_{x_2})
%  \begin{bmatrix}
%  2u^1_{,1}&u^1_{,2}+u^2_{,1}\\
%  u^2_{,1}+u^1_{,2}&2u^2_{,2}\\
%\end{bmatrix}   \right] \\
%=(X^1\partial_{x_1} +X^2\partial_{x_2})(\partial_{x_1},\partial_{x_2})\left[\nu(x) 
%  \begin{bmatrix}
%  2u^1_{,1}&u^1_{,2}+u^2_{,1}\\
%  u^2_{,1}+u^1_{,2}&2u^2_{,2}\\
%\end{bmatrix}   \right]+ \\
%+\begin{bmatrix}
%  (X^1_{,1}\partial_{x_1} +X^2_{,1}\partial_{x_2})\\
%  (X^1_{,2}\partial_{x_1} +X^2_{,2}\partial_{x_2})\\
%\end{bmatrix} \left( \nu(x)\mathbf{D}u\right)   
%  \end{align*}

% In other words we get following
\begin{equation}\label{eq:1term_eq}
\partial_X {\rm div\,}\left[\nu(x)\mathbf{D}u\right]={\rm div\,}\left[\nu(x)\partial_X \mathbf{D}u\right]-\sum_{i}X^i_{,k}\partial_{x_i} \left(  \nu(x)\mathbf{D}u\right) 
.\end{equation}

 %As we know, that $\mathbf{D}u=\frac{1}{2} \left( \nabla{u}+\nabla^T {u} %\right) $, for each term we get
 We have
\[\nabla({\partial_X u})=\nabla{u}\nabla{X}+\partial_X \nabla u .\]
Using the above formula we rewrite the directional derivative of symmetric tensor  as follows
\begin{equation}\label{3.6_eq}
 \partial_X (\mathbf{D}u)=\frac{1}{2}\left[\nabla({\partial_X u})+ \nabla^T ({\partial_X u})-\nabla{u}\nabla{X}-\nabla{X}^T \nabla{u}^T\right] 
 .\end{equation}
	Finally, using (\ref{3.6_eq}) and (\ref{eq:1term_eq}) we rewrite \eqref{eq:st_eq} in the following  way  
\begin{equation}
\begin{split}\label{eq:3.7_eq}
&-{\rm div\,}\left[\nu (x) \mathbf{D}(\partial_X u) \right] +\frac{1}{2}{\rm div\,}(\nu(x)\nabla^T X \nabla^T u+\nu(x)\nabla u \nabla X)\\
&+\sum_{i}X^i_{,k}\partial_{x_i}\left( \nu(x)\mathbf{D}u\right) +\nabla (\partial_X p)-\sum_{i}X^i_{,k}\partial_{x_i} p=\partial_X f
.\end{split}
\end{equation} 
Multiplying the \eqref{eq:3.7_eq} by $\psi\in C^{\infty}_0(\Omega) $ and integrating by parts we obtain 
 
\begin{equation}\label{eq:3.8}
\begin{split}
&\int_{\Omega} \nu (x)\mathbf{D}(\partial_X u):\mathbf{D}(\psi) dx-\frac{1}{2}\int _{\Omega} \nu(x)\left(  \nabla^T X \nabla^T u +\nabla u \nabla X\right) : \nabla(\psi) dx\\
& -\int_{\Omega}\left( \nu(x)\mathbf{D}u\right)\partial_{x_i}(X^i_{,k}\,\psi) \,dx-\int_{\Omega} \partial_X\left(  p\right) \, {\rm div\,}\psi\,dx+\int_{\Omega} p\,\partial_{x_i}\left( X^i_{,k} \psi\right) \,dx=\int_{\Omega} \partial_X f\,\psi \,dx
.\end{split}
\end{equation}
We test \eqref{eq:3.7_eq} by function $\partial _X v \in H^{1}_0 (\Omega)$ and get  
\begin{equation}\label{eq:weak_eq}
\begin{split}
\int_{\Omega}&\nu(x)\mathbf{D}(\partial_X u):\mathbf{D}(\partial_X v) dx-\frac{1}{2}\int _{\Omega} \nu(x)\left(  \nabla^T X \nabla^T u +\nabla u \nabla X\right) : \nabla(\partial_X v) dx\\
 &-\int_{\Omega}\left( \nu(x)\mathbf{D}u\right)\partial_{x_i}(X^i_{,k}\,\partial_X v) \,dx-\int_{\Omega} \partial_X\left(  p\right) \, {\rm div\,}(\partial_X v)dx\\
 &-\int_{\omega}X^i_{,k}\partial_{x_i} p\,\partial_X v \,dx=\int_{\Omega} \partial_X f\,\partial_X v \,dx
.\end{split}
\end{equation}

Note that, ${\rm div\,}u=0$ , but as we are differentiating the system of Stokes equations along the given vector field $X$ we get 
\begin{equation}\label{div_X}
0= \partial_X{\rm div\,}u={\rm div\,}(\partial_X u)-\sum_{i,j=1}^{2}\partial_{x_i} X^j \partial_{x_j} u^i .
\end{equation}
 From the above equality we deduce that  ${\rm div\,}(\partial_X u)=\sum_{k,j=1}^{2}\partial_{x_k} X^j \partial_{x_j} u^k$.\\

\subparagraph{The pressure term.} 
We are going to show that $\partial_X p\in L^2 (\Omega) $. In order to estimate $\partial_X p $ we will use the Bogovskii type approach (see Lemma \ref{lem_bogovskii}). 
%so we could show that there exist $\partial_X p\in L^2 (\Omega) $ such that equation (\ref{eq:st_eq}) holds for every $\psi\in C^{\infty}_0(\Omega) $, then the following estimate holds
Our aim is to show 
\begin{equation}\label{eq:bogov2_eq}
\begin{split}
\Vert \partial_{X} p\Vert_{L^2} &\leqslant C_{b_1}\Big(\Vert\nu (x)\mathbf{D}\partial_X u\Vert_{L^2}  +\frac{1}{2}\Vert\nu(x)( \nabla^T X \nabla^T u +\nabla u \nabla X)\Vert_{L^2} \\
&+\left((\Vert \nu(x)\mathbf{D}u\Vert_{L^2} + \Vert p\Vert_{L^2}\right)(c_p c_{X"}+c_{X'})+c_p \Vert\partial_X f\Vert_{L^2} \Big)
,\end{split}
\end{equation}
where $c_p $ is the constant from Poincar\'e inequality and $c_{X"}, \,c_{X'}$ are constants depending only on $X$.
In general,   $\partial_X p $ satisfies a following inequality
\[ \left| \int_{\Omega} \partial_{X}p\, dx\right| =\left|  -\int_{\Omega} p\,{\rm div\,}X  \,dx +\int_{\partial\Omega} X^i n^i p\, ds\right| \leqslant \Vert{\rm div\,}X\Vert_{L^2} \Vert p\Vert_{L^2} .\;\;\;\, (*)\]
The above inequality we get via integrating by parts and using the condition on the boundary that is
\[X^i n^i =0 \,\, on \,\, \partial\Omega .\] 

Recall that (\ref{eq:st_eq}) is equivalent to (\ref{eq:3.7_eq}).
Let us consider the functional

\begin{equation}\label{fnctl}
\begin{split}
\mathcal{F}(\psi) &=(\nu (x)\mathbf{D}(\partial_X u),\mathbf{D}\psi) -\frac{1}{2}\left(\nu(x) ( \nabla^T X \nabla^T u +\nabla u \nabla X), \nabla\psi\right)\\
& -\left( \nu(x)\mathbf{D}u,\partial_{x_i}(X^i_{,k}\,\psi)\right)+(p,\partial_{x_i}( X^i_{,k} \psi))-(\partial_X f,\,\psi)
 \end{split}
\end{equation}

for all $\psi\in H^{1}_{0} (\Omega)$.
Recall (\ref{eq:2.13_eq}), if take a test function  $ \partial_{X}^{*}\psi \in C^{\infty}_0(\Omega)$, we get
 \[(p,{\rm div\,}\partial_{X}^{*}\psi)=-(\partial_X  p \, ,{\rm div\,}\psi)+\Big( p\, ,\partial_{x_i}\left( X^i_{,k} \psi\right)\Big) \]
 where $ \partial_{X}^{*}\psi =-\partial_{x_i}(X^i \, \psi ) $.
 Therefore by Lemma \ref{lem_bogovskii} there exists a uniquely determined  $\partial_{X} p\in L^2 (\Omega)$ that  $ \frac{1}{\vert \Omega \vert}\int_{\Omega} \partial_{X}p$ is bounded, and such that 
\begin{equation}\label{3.10}
\mathcal{F}(\psi)=(\partial_{X}p,{\rm div\,}\psi)
.\end{equation}
%for all $\psi\in H^{1}_{0} (\Omega)$. From  (\ref{eq:3.8}) and (\ref{3.10}) we find
%\[(\partial_{X}p-q,{\rm div\,}\psi)=0 \]
%{\color{blue} which implies $q=\partial_{X}p +c$. If we normalize $\partial_{X} p$ by condition (*) and without loss of generality we take $q=\partial_{X}p$.} 
Consider the problem
\[{\rm div\,}\psi= \partial_{X} p- \frac{1}{\vert \Omega \vert}\int_{\Omega} \partial_{X}p =g \]
\begin{equation}\label{eq:sys2_eq}
\psi \in H^{1}_0 (\Omega)
\end{equation}
\[\Vert\psi\Vert_{H^{1}}\leqslant C_b \Vert \partial_{X} p\Vert_{L^2},\]

with $\Omega$ bounded and satisfying the cone condition. Since
$$\int_{\Omega}g=0 ,\, g\in L^2(\Omega),\;\,\,\Vert\psi\Vert_{H^{1}}\leqslant C_{b1} \Vert \partial_{X} p\Vert_{L^2}$$   
from Theorem III.3.1 ([\ref{galdi_book}]) we deduce the existence of $\psi$ solving the  \eqref{eq:sys2_eq}. We use such a $\psi$ as a test function in (\ref{3.10}), we obtain

\begin{equation}
\begin{split}
\Vert \partial_{X} p\Vert_{L^2}^2 &=(\nu (x)\mathbf{D}(\partial_X u),\mathbf{D}\psi) -\frac{1}{2}\left(\nu(x) ( \nabla^T X \nabla^T u +\nabla u \nabla X), \nabla\psi\right)\\
& -\left( \nu(x)\mathbf{D}u,\partial_{x_i}(X^i_{,k}\,\psi)\right)+(p,\partial_{x_i}( X^i_{,k} \psi))-(\partial_X f,\,\psi)
.\end{split}
\end{equation}
Applying the H\"older and Poincar\'e inequalities to the above equation, we get
\begin{equation}
\begin{split}
\Vert \partial_{X} p\Vert_{L^2}^2 &\leqslant \Vert\nu (x)\mathbf{D}\partial_X u\Vert_{L^2} \Vert\mathbf{D}\psi\Vert_{L^2} +\frac{1}{2}\Vert( \nabla^T X \nabla^T u +\nabla u \nabla X\Vert_{L^2} \Vert \nabla\psi \Vert_{L^2}\\
& +\Vert \nu(x)\mathbf{D}u\Vert_{L^2} \Vert X^i_{,ki}\,\psi +X^i_{,k}\,\nabla\psi\Vert_{L^2} +\Vert p\Vert_{L^2} \Vert X^i_{,ki}\,\psi +X^i_{,k}\,\nabla\psi\Vert_{L^2} +\Vert\partial_X f\Vert_{L^2} \Vert\psi\Vert_{L^2}\\
&\leqslant\Vert\nu (x)\mathbf{D}\partial_X u\Vert_{L^2} \Vert\nabla\psi\Vert_{L^2} +\frac{1}{2}\Vert\nu(x)( \nabla^T X \nabla^T u +\nabla u \nabla X)\Vert_{L^2} \Vert \nabla\psi \Vert_{L^2}\\
&+ (\Vert \nu(x)\mathbf{D}u\Vert_{L^2} +\Vert p\Vert_{L^2} )(c_p c_{X"}+c_{X'})\Vert \nabla \psi \Vert_{L^2} +c_p \Vert\partial_X f\Vert_{L^2} \Vert\nabla\psi\Vert_{L^2}
.\end{split}
\end{equation}
Using inequality (\ref{eq:sys2_eq}) we reduce both sides of the above expression by the term $\Vert \partial_{X} p\Vert_{L^2} $, and get the required estimate (\ref{eq:bogov2_eq}).
%\begin{equation}\label{eq:bogov2_eq}
%\begin{split}
%\Vert \partial_{X} p\Vert_{L^2} &\leqslant C_{b_1}(\Vert\nu (x)\mathbf{D}%%%\partial_X u\Vert_{L^2}  +\frac{1}{2}\Vert\nu(x)( \nabla^T X \nabla^T u +\nabla u %\nabla X)\Vert_{L^2} \\
%+\left((\Vert \nu(x)\mathbf{D}u\Vert_{L^2} + \Vert p\Vert_{L^2}\right)(c_p c_{X"}%+c_{X'})+c_p \Vert\partial_X f\Vert_{L^2} )
%.\end{split}
%\end{equation}

Now,  we will examine the remaining terms of the equation (\ref{eq:weak_eq}), in order to estimate them. Recall that $X\cdot n =0$, $X\cdot \tau =1 $. We rewrite the directional derivative as
\[ \partial_{X} u= X\cdot \tau\, \partial_{\tau} u +X\cdot n\, \partial_{n} u .\]
By assumptions and boundary condition (\ref{eq:bcSt}), we get
\begin{equation}\label{bc}
 \partial_{X} u \vert_{\partial \Omega}=  \partial_{\tau} u \vert_{\partial{\Omega}} =0
.\end{equation}
Then Korn inequality holds 
\begin{equation}\label{eq:korn2_eq}
\int_{\Omega} \nu (x)\vert\mathbf{D}(\partial_X u)\vert^2 dx \geqslant C\int_{\Omega} \vert\nabla\partial_X u\vert^2 dx
.\end{equation}
Using H\"older and Young's inequalities to the 2nd term of \eqref{eq:weak_eq}, we get
\begin{equation}
\begin{split}
\frac{1}{2}\int _{\Omega} & \nu(x)\left(  \nabla^T X \nabla^T u +\nabla u \nabla X \right): \nabla(\partial_X v) dx\\
&\leqslant \int _{\Omega} \nu(x)\left( \vert\nabla u\vert\vert \nabla X \vert\right) \vert\nabla(\partial_X v)\vert dx\\
&\leqslant C \Vert \nu(x)\nabla u\Vert_{L^2}\Vert\nabla(\partial_X v)\Vert_{L^2}\\
&\leqslant C_1(\epsilon)\Vert \nu(x)\nabla u\Vert_{L^2}^{2} +C \epsilon \Vert\nabla(\partial_X v)\Vert_{L^2}^2 
.\end{split}
\end{equation}
The $5^{th}$ term of the LHS of the (\ref{eq:weak_eq}) we could rewrite in the following way 
\begin{equation}
V=-\int_{\Omega} X^{i}_{,k} \partial_{x_i}p \,X^{l}\partial_{x_l} v^k \,dx=2\left[\int_{\Omega}p\,X^{i}_{,k} \partial_{x_i}\partial_X v^k+\int_{\Omega}p\,X^{i}_{,ki}\partial_X v^k\right] 
.\end{equation}
Combining the above estimates of all terms of (\ref{eq:weak_eq}), we obtain 
\begin{equation}\label{eq:3.13_eq}
\begin{split}
\int_{\Omega} \nu (x)\mathbf{D} (\partial_X u) :\mathbf{D} (\partial_X v) \, dx &\leqslant \int_{\Omega}\vert \partial_{X} p\vert \vert X^i_k \partial_{x_i} v \vert\, dx+ \int _{\Omega} \nu(x)\left( \vert\nabla u\vert\vert \nabla X \vert\right) \vert\nabla(\partial_X v)\vert dx\\
&+\int_{\Omega}\vert \nu(x)\mathbf{D}u\vert \vert X^i_{,ki}\,\partial_X v+X^i_{,k}\,\nabla\partial_X v \vert \,dx\\
&+\int_{\Omega}\vert p\vert\vert X^i_{,ki}\,\partial_X v+X^i_{,k}\,\nabla\partial_X v \vert\, dx+\int_{\Omega}\vert \partial_{X}f \vert\vert\partial_X v\vert\,dx\\
&\leqslant C_{X'}\Vert \partial_{X} p\Vert_{L^2} \Vert\nabla v \Vert_{L^2}+c_{X'}c_{\nu}\Vert\nabla u\Vert_{L^2} \Vert \nabla \partial_X v\Vert_{L^2} \\
&+(c_p c_{X''}+c_{X'})\Vert\nu(x)\mathbf{D}u\Vert_{L^2} \Vert \nabla \partial_X v\Vert_{L^2}\\
&+(c_p c_{X''}+c_{X'})\Vert p\Vert_{L^2} \Vert\nabla\partial_{X} v\Vert_{L^2}+\Vert \partial_{X}f \Vert_{L^2} \Vert\partial_X v\Vert_{L^2}
.\end{split}
\end{equation}
We take $u=v$ in the above expression. Using the \eqref{eq:bogov2_eq} and Young's inequality with small $\epsilon$ we get
\begin{equation}
\begin{split}
\int_{\Omega} \nu (x)\vert\mathbf{D} (\partial_X u) \vert^2 dx & \leqslant C_{b_1}\epsilon\Big(\Vert\nu (x)\mathbf{D}\partial_X u\Vert_{L^2}^2 +c_2\Vert\nabla u \Vert_{L^2}^2 \Big) \\
&+c_3\left(\Vert \nu(x)\mathbf{D}u\Vert_{L^2}^2 + \Vert p\Vert_{L^2}^2\right)+c_p \Vert\partial_X f\Vert_{L^2}^2 )+C(\epsilon)\Vert \nabla u\Vert_{L^2}^2\\
&+C(\epsilon)[ c_2 \Vert\nabla u\Vert_{L^2}^2+c_3 (\Vert\nu(x)\mathbf{D}u\Vert_{L^2}^2+ \Vert p\Vert_{L^2}^2)+c_p\Vert \partial_{X}f \Vert_{L^2}^2 ]+\epsilon\Vert\nabla\partial_X u\Vert_{L^2}^2
\end{split}
\end{equation}
where $(c_p c_{X''}+c_{X'}):=c_3,\,  c_{X'}c_{\nu}=:c_2 .$

We apply the basic energy estimates (\ref{eq:bogov1}) and (\ref{2.3}) to get 
\begin{equation}
\begin{split}
\int_{\Omega} \nu (x)\vert\mathbf{D} (\partial_X u) \vert^2 dx 
&\leqslant C_{b_1}\epsilon\Vert\nu (x)\mathbf{D}\partial_X u\Vert_{L^2}^2  +(C_{b_1}\epsilon+C(\epsilon))\Big(c_{2}\Vert\nabla u \Vert_{L^2}^2 \\
&+c_4\left(\Vert \nabla u\Vert_{L^2}^2 + \Vert f\Vert_{L^2}^2\right)+c_p \Vert\partial_X f\Vert_{L^2}^2 \Big)+C(\epsilon)\Vert \nabla u\Vert_{L^2}^2+\epsilon\Vert\nabla\partial_X u\Vert_{L^2}^2\\
&\leqslant C_{b_1}\epsilon\Vert\nu (x)\mathbf{D}\partial_X u\Vert_{L^2}^2 +c_5 \Vert f\Vert_{L^2}^2 +c_6 \Vert\partial_X f\Vert_{L^2}^2+\epsilon\Vert\nabla\partial_X u\Vert_{L^2}^2
.\end{split}
\end{equation}
For sufficiently small $\epsilon$ we obtain
\begin{equation}
\begin{split}\label{3.21}
\int_{\Omega} \nu(x)\vert\mathbf{D}(\partial_X u) \vert^2 dx & \leqslant A \Big(c_5 \Vert f\Vert_{L^2}^2 +c_6 \Vert\partial_X f\Vert_{L^2}^2+\epsilon\Vert\nabla\partial_X u\Vert_{L^2}^2\Big)
,\end{split}
\end{equation} 
where $A=1/(1-C_{b_1}\epsilon)$.

From (\ref{eq:korn2_eq}) and (\ref{3.21}) follows  

\begin{equation}\label{3.22}
\begin{split}
 C\Vert\nabla\partial_X u\Vert_{L^2}^2 &\leqslant \int_{\Omega} \nu(x)\vert\mathbf{D}(\partial_X u) \vert^2 dx \\
&\leqslant A (c_5 \Vert f\Vert_{L^2}^2 +c_6 \Vert\partial_X f\Vert_{L^2}^2+\epsilon\Vert\nabla\partial_X u\Vert_{L^2}^2)
.\end{split}
\end{equation} 
The last term on the RHS we can put on the LHS, and for small enough $\epsilon$ we obtain desired inequality (\ref{eq:tan_regSt}).

\end{proof}

\subsection{The Model Case.} 
In this section, we assume that $\Omega$ is whole $\mathbb{R}^2$. In such a way that interior of the domain $S$ is half-space $x_2 <0$, and exterior is $x_2 >0$, i.e.
\[S:=\lbrace x\in \mathbb{R}^2\, \vert \, x_2 < 0 \rbrace.\]
In this case the derivative along the tangential vector field takes form  
\[\partial_{X}=\partial_{x_1}.\]
Obviously, viscosity (\ref{visc}) becomes a jump function along direction $x_2$ :
\begin{equation}
\nu(x_2)= \begin{cases}
1, & x_2 >0\\
m, & x_2 < 0
.\end{cases}
\end{equation}
One of the difficulties in proving higher regularity in the case with $n=p=2$ is we do not have embedding $H^1\nrightarrow L^{\infty}$. On the other hand, the jump function $\nu (x)$ does not belong to space $H^1$.
Thus, we work in the spaces defined in (\ref{1-dimsp})-(\ref{1-Dsp}), reducing our problem to space dimension one, where we have embedding $H^1\hookrightarrow L^{\infty}$. Here we just show a formal estimate can be obtained for this particular case.
This subsection gives us the crucial idea of proof of Theorem \ref{thm_main} and  the following lemma reads 
 
 \begin{lemma}\label{lem_ModelCase} Assume that domain $ \Omega =\mathbb{R}^2$ and let $\partial_{x_1}^k {f}\in L^2(\mathbb{R}^2)$ for some $k\in \mathbb{N}$. Then for every solution $u$ of Stokes equations (\ref{stokes_eq})-(\ref{div_eq}) we have $\partial_{x_1} u\in H^1(\mathbb{R}_{x_2},H^k(\mathbb{R}_{x_1}))$. 
Moreover, $\nabla u\in L^{\infty}(\mathbb{R}^2)$.
  
\end{lemma}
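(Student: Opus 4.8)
The strategy exploits that in this geometry the viscosity \eqref{visc} depends only on $x_2$, so the tangential derivative $\partial_X=\partial_{x_1}$ commutes with the whole operator, and then recovers the normal direction algebraically from the equation itself. \emph{Step~1 (tangential bootstrap).} Applying $\partial_{x_1}^{j}$ to the Stokes system \eqref{stokes_eq}, \eqref{div_eq} again produces a Stokes system with the \emph{same} viscosity, unknowns $(\partial_{x_1}^{j}u,\partial_{x_1}^{j}p)$ and forcing $\partial_{x_1}^{j}f$; moreover the field $X=(1,0)$ is constant, so every commutator term in the identity behind Lemma \ref{lem_trSt} drops out and the tangential estimate collapses to the clean bound $\Vert\nabla\partial_{x_1}^{j}u\Vert_{L^2}+\Vert\partial_{x_1}^{j}p\Vert_{L^2}\leqslant C\sum_{i=0}^{j}\Vert\partial_{x_1}^{i}f\Vert_{L^2}$, the pressure being controlled by the Bogovskii-type argument exactly as in the proof of Lemma \ref{lem_trSt}. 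Iterating over $j=0,1,\dots,k$ gives $\partial_{x_1}^{j}u\in H^1(\mathbb{R}^2)$ and $\partial_{x_1}^{j}p\in L^2(\mathbb{R}^2)$ for all $j\leqslant k$, which after re-indexing is the claimed $\partial_{x_1}u\in H^1(\mathbb{R}_{x_2};H^{k}(\mathbb{R}_{x_1}))$.

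\emph{Step~2 (anisotropic slicing).} Since $H^1(\mathbb{R}^2)\hookrightarrow L^\infty(\mathbb{R}^2)$ fails, I would isolate the one-dimensional fact that on each of the half-planes $\{x_2>0\}$ and $\{x_2<0\}$, a function $g$ with $g,\partial_{x_1}g,\partial_{x_2}g,\partial_{x_1}^{2}g,\partial_{x_1}\partial_{x_2}g\in L^2$ is automatically in $L^\infty$ there: the profile $\Phi(x_2):=\int_{\mathbb{R}_{x_1}}\bigl(|g|^{2}+|\partial_{x_1}g|^{2}\bigr)\,dx_1$ lies in $L^{1}$, and by Cauchy--Schwarz $\Phi'\in L^{1}$, so $\Phi\in W^{1,1}\hookrightarrow L^\infty$; combining with $\Vert g(\cdot,x_2)\Vert_{L^\infty_{x_1}}^{2}\leqslant 2\Vert g(\cdot,x_2)\Vert_{L^2_{x_1}}\Vert\partial_{x_1}g(\cdot,x_2)\Vert_{L^2_{x_1}}$ yields $\Vert g\Vert_{L^\infty}^{2}\leqslant C\sup_{x_2}\Phi(x_2)<\infty$. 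Applying this to $g=\partial_{x_1}u$, whose five required derivatives are precisely those delivered by Step~1 once $k\geqslant 2$, gives $\partial_{x_1}u\in L^\infty(\mathbb{R}^2)$, in particular $\partial_{x_1}u_1,\partial_{x_1}u_2\in L^\infty(\mathbb{R}^2)$.

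\emph{Step~3 (normal derivatives).} For $\partial_{x_2}u_2$ nothing new is needed: ${\rm div\,}u=0$ gives $\partial_{x_2}u_2=-\partial_{x_1}u_1\in L^\infty$. For $\partial_{x_2}u_1$ I would argue in each half-plane separately, where $\nu$ is constant and, since $u$ is divergence free, ${\rm div\,}(\nu\mathbf{D}u)=\tfrac{\nu}{2}\Delta u$; the first component of \eqref{stokes_eq} then reads $\tfrac{\nu}{2}\partial_{x_2}^{2}u_1=\partial_{x_1}p-f_1-\tfrac{\nu}{2}\partial_{x_1}^{2}u_1$, so $\partial_{x_2}^{2}u_1$, and after one more $\partial_{x_1}$ also $\partial_{x_1}\partial_{x_2}^{2}u_1$, is an $L^2$ combination of tangential derivatives of $u$ and $p$ and of $f_1,\partial_{x_1}f_1$, all controlled by Step~1. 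Hence $g=\partial_{x_2}u_1$ meets the hypotheses of the slicing lemma on each half-plane and $\partial_{x_2}u_1\in L^\infty$ there, hence on all of $\mathbb{R}^2$. (Across $\{x_2=0\}$ the quantity $\partial_{x_2}u_1$ is in general discontinuous, consistently with the transmission relation $[\,\nu(\partial_{x_1}u_2+\partial_{x_2}u_1)\,]=0$; it stays bounded, but this is why one cannot hope for $\nabla u\in H^1(\mathbb{R}^2)$.) Together with Step~2 this gives $\nabla u\in L^\infty(\mathbb{R}^2)$, which with the regularity of Step~1 completes the plan (the $L^\infty$ bound being obtained from Step~1 with, say, $k=2$).

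\emph{Main obstacle.} The delicate point is $\partial_{x_2}u_1$ at the interface: one must not try to control $u$ to second order globally (this fails because of the jump), but localise to each half-plane and use the equation to trade the bad second normal derivative for tangentially regular quantities, after which only the anisotropic slicing — which asks for $x_1$-derivatives plus a single $x_2$-derivative — is used. A secondary, genuinely technical point, treated here only formally as announced, is the justification of the whole-plane pressure bounds $\partial_{x_1}^{j}p\in L^2(\mathbb{R}^2)$: on $\mathbb{R}^2$ there is no a priori decay at infinity, so the Bogovskii construction behind Step~1 must be adapted (for instance by localisation, or by a Fourier argument in $x_1$).
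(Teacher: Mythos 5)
Your proposal is correct and its overall architecture coincides with the paper's: a tangential bootstrap in $x_1$ (exploiting that $\partial_{x_1}$ commutes with $\nu(x_2)$), the anisotropic embedding $H^1(\mathbb{R}_{x_2};H^1(\mathbb{R}_{x_1}))\hookrightarrow L^\infty(\mathbb{R}^2)$ — your ``slicing lemma'' with the profile $\Phi\in W^{1,1}$ is exactly a hands-on proof of that embedding — and recovery of the normal derivatives from the equation. The one genuine divergence is Step~3. The paper never leaves the full plane: it reads off from the first row of \eqref{stokes_eq} that $\partial_{x_2}\bigl[\nu(x_2)(u^1_{,2}+u^2_{,1})\bigr]$ equals a combination of $f^1$, $p_{x_1}$ and $\nu u^1_{,11}$, concludes that the \emph{flux} $\nu(u^1_{,2}+u^2_{,1})$ lies in $H^1(\mathbb{R}_{x_2};H^1(\mathbb{R}_{x_1}))\subset L^\infty$, and only then divides by $\nu\geqslant 1$ and subtracts the already-controlled $\nu u^2_{,1}$; the second component is treated the same way through $\nu u^2_{,2}$. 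You instead localise to each half-plane, use that $\nu$ is constant there to write $\tfrac{\nu}{2}\partial_{x_2}^2 u_1=\partial_{x_1}p-f_1-\tfrac{\nu}{2}\partial_{x_1}^2u_1$, and apply the slicing lemma on each half-plane. Both routes are sound; your own parenthetical observation that $\partial_{x_2}u_1$ jumps while $[\,\nu(\partial_{x_1}u_2+\partial_{x_2}u_1)\,]=0$ is precisely the structure the paper's choice of good unknown exploits, and working with the flux avoids having to re-justify the one-dimensional embedding on a half-line (harmless, but an extra remark). Your handling of $\partial_{x_2}u_2$ via ${\rm div\,}u=0$ is in fact shorter than the paper's second pass through the equation. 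Finally, the point you flag as delicate — the whole-plane bounds $\partial_{x_1}^{j}p\in L^2(\mathbb{R}^2)$, where the bounded-domain Bogovskii argument of Lemma \ref{lem_bogovskii} does not literally apply — is left at exactly the same formal level in the paper, which announces the subsection as ``a formal estimate'' and appeals to standard theory and difference quotients for the pressure; so no gap relative to the paper, but you are right that a rigorous version would need the adaptation you mention.
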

\begin{proof}
Let us propagate the Stokes equation over the given vector field

\begin{equation}\label{3.28model}
 -{\rm div\,}\left[\nu(x)\mathbf{D}\partial_{x_1} u\right] +\nabla{\partial_{x_1}p}=\partial_{x_1}f
 .\end{equation}
 Multiplying by the test function $\partial_{x_1} v\in H^1_0 (\mathbb{R}^2)$ and integrating by parts we get 
 \[\int_{\mathbb{R}^2} \nu(x_2) \vert \mathbf{D}\partial_{x_1} u\vert \vert \mathbf{D}\partial_{x_1} v\vert\leqslant C \Vert \partial_{x_1} f\Vert _2\Vert\partial_{x_1}v\Vert_{L^2} .\]
Testing by function $\partial_{x_1} u$, we could bound from below with Korn inequality
 \[c \Vert \nabla \partial_{x_1}u\Vert_{L^2}^2\leqslant\int_{\mathbb{R}^2}\nu(x_2) \vert \mathbf{D}\partial_{x_1} u\vert^2 , \] 
 and we have
\[\Vert \nabla \partial_{x_1}u\Vert_{L^2}\leqslant c_1 \Vert\partial_{x_1} f\Vert _2 . \]
So, we get that $\partial_{x_1}u\in H^1(\mathbb{R}^2)$.
 
Now, we will follow the same procedure as above the for second time and assume $\partial^2_{x_1}{f}\in L^2(\mathbb{R}^2)$. Differentiating (\ref{3.28model}) in $x_1$, we get

\begin{equation}
 -{\rm div\,}\left[\nu(x)\mathbf{D}\partial^2_{x_1} u\right] +\nabla{\partial^2_{x_1}p}=\partial^2_{x_1}f
. \end{equation}
 We have
 \[\Vert \nabla \partial^2_{x_1}u\Vert_{L^2}\leqslant c_2 \Vert\partial^2_{x_1} f\Vert _2 .\]

 From the above estimates we deduce that
 \begin{equation}
u^{2}_{,1} \in H^1 (\mathbb{R}_{x_2};H^2(\mathbb{R}_{x_1})),\,  u^{1}_{,1} \in H^1 (\mathbb{R}_{x_2};H^2(\mathbb{R}_{x_1}))
 .\end{equation}
If $\partial^{k}_{x_1}f\in L^2 (\mathbb{R}^2)$, by iteration of the same procedure as above it follows that
\begin{equation}
\nabla \partial_{x_1}^k u\in L^2(\mathbb{R}^2)
\end{equation}
for some $k\in \mathbb{N}$. From standard theory [\ref{temam}] (Ch.I, Proposition 2.2.), for this weak solution $u$ we can deduce an existence of pressure $p $, which is high regular in the $x_1$ direction.

Let us rewrite the first row of Stokes equation in the following way
\[-\nu(x_2) u_{,11}^1-\frac{1}{2}\,\partial_{x_2}\left[\nu(x)(u_{,2}^{1}+u_{,1}^{2})\right]+ \partial_{x_1}p = f^1. \]
Because differentiation in $x_1$ direction over $\nu(x_2)$ is well defined, we transfer this term to the RHS. The fact that the weak derivative of $p$ is in $L^2$, can be proved by standard techniques like difference quotients in Evans [\ref{evans}].
\begin{equation}\label{St1row}
\partial_{x_2}\left[\nu(x_2)(u_{,2}^{1}+u_{,1}^{2})\right]=2 f^1-2 p_{x_1}+2\nu(x_2)u^1_{,11} .
\end{equation}
Taking the $L^2$ norm in the direction $x_1$ we get
\begin{equation}\label{3.28}
\int _{\mathbb{R}_{x_1}}\left| \partial_{x_2}\left[\nu(x_2)(u_{,2}^{1}+u_{,1}^{2}) \right]\right|^2\, dx_1 \leqslant C \Big( \int_{\mathbb{R}_{x_1}} \vert f^1 \vert^2 \, dx_1+\int_{\mathbb{R}_{x_1}}\vert p_{x_1}\vert^2\,  dx_1+\int_{\mathbb{R}_{x_1}}\vert\nu(x_2)\partial_{x_1}^2 u^1 \vert^2\,  dx_1\Big).
\end{equation}
Now we differentiate (\ref{St1row}) by $x_1$ and take the $L^2(\mathbb{R}_{x_2};L^2(\mathbb{R}_{x_1}))$ norm:
%\begin{equation}
%\begin{split}\label{3.29}
%\Big[ \int _{\mathbb{R}_{x_2}} \Big\Vert &\partial_{x_2}\left[\nu(x_2)(u_{,21}^{1}+u_{,11}^{2})\right]\Big\Vert_{L^2(\mathbb{R}_{x_1})} \, dx_2 \Big]^{1/2} \leqslant C_1 \Big(\Vert \partial_{x_1} f^1 \Vert_{L^2(\mathbb{R}_{x_2};L^2(\mathbb{R}_{x_1}))}\\
%&+\Vert \partial_{x_1}p_{x_1}\Vert_{L^2(\mathbb{R}_{x_2};L^2(\mathbb{R}_{x_1}))} +\Vert\nu(x_2 )\partial_{x_1}^3  u^1 \Vert_{L^2(\mathbb{R}_{x_2};L^2(\mathbb{R}_{x_1}))}\Big)
%,\end{split}
%\end{equation}
%Then from (\ref{3.28})-(\ref{3.29}) we deduce that
%\begin{equation}
%\Big\Vert \partial_{x_2}\left[\nu(x)(u_{,2}^{1}+u_{,1}^{2})\right]\Big\Vert_{L^2(\mathbb{R}_{x_1})}\leqslant \Vert\partial_{x_1} f^1\Vert_{L^2(\mathbb{R}_{x_1})}+\Vert p_{x_1}\Vert_{L^2(\mathbb{R}_{x_1})}+\Vert \nu(x_2)\partial_{x_1}^2 \nabla u\Vert_{L^2(\mathbb{R}_{x_1})}
%,\end{equation}
%and also,
%i.e,
\begin{equation}\label{3.31}
\begin{split}
\Big\Vert \partial_{x_2}\left[\nu(x)(u_{,21}^{1}+u_{,11}^{2})\right]\Big\Vert_{L^2(\mathbb{R}_{x_2};L^2(\mathbb{R}_{x_1}))}&\leqslant C_1 \Big( \Vert\partial_{x_1} f^1\Vert_{{L^2 (\mathbb{R}_{x_2}};{L^2(\mathbb{R}_{x_1})})}+\Vert \partial_{x_1}p_{x_1}\Vert_{{L^2 (\mathbb{R}_{x_2}};{L^2(\mathbb{R}_{x_1})})}\\
&+\Vert\nu(x_2)\partial_{x_1}^3 u \Vert_{{L^2 (\mathbb{R}_{x_2}};{L^2(\mathbb{R}_{x_1})})}\Big)
.\end{split}
\end{equation}

From (\ref{3.31}), by differentiating in $x_1$ direction, we have $$\partial_{x_2}\left[\nu(x)(u_{,2}^{1}+u_{,1}^{2} ) \right]\in L^{2} (\mathbb{R}_{x_2};H^1 (\mathbb{R}_{x_1}))\hookrightarrow L^{2}(\mathbb{R}_{x_2};L^{\infty} (\mathbb{R}_{x_1})),$$  
which implies that $\left[\nu(x)(u_{,2}^{1}+u_{,1}^{2})\right] \in H^1 (\mathbb{R}_{x_2};H^1(\mathbb{R}_{x_1}))\subset L^{\infty}.$ \\
From (\ref{div_eq}) we have $u^1_{,1}=-u^2_{,2}\in H^1$ globally.
It follows that $u \in H^1 (\mathbb{R}_{x_2}; H^1(\mathbb{R}_{x_1}))$.\\

%Assume characteristic function $\chi_S$, we need to show that $\chi_S \in B^1_{1,\infty}$
%\[\partial _x \chi_S = \delta(\cdot -0)-\delta(\cdot -1)\]
%It's known that Fourier transform $F\delta=1$
%\[\Delta_k \delta=F^{-1} \phi(2^{-k}\chi)=2^{kN}h(2^k h)\] 
%\[\Vert \Delta_k \delta\Vert_1=c\]
%it follows $\delta\in B^0_{1,\infty}$ and  $\chi_S \in B^1_{1,\infty}\hookrightarrow B^{\frac{1}{2}}_{2,\infty}$, which implays $\nu(x_2)\in B^{\frac{1}{2}}_{2,\infty}(\mathbf{R}_{x_2})$ we could rewrite (3.25) with the (3.22) it gives  

From the above considerations we have an estimate
\begin{equation}\label{3.32}
\begin{split}
\Vert\nu(x_2)(u_{,2}^{1}+u_{,1}^{2})\Vert_{{H^1 (\mathbb{R}_{x_2}};{H^1 (\mathbb{R}_{x_1})})}&\leqslant C_2\Big( \Vert f^1\Vert_{L^2 (\mathbb{R}_{x_2};H^1(\mathbb{R}_{x_1}))}+\Vert p_{x_1}\Vert_{L^2 (\mathbb{R}_{x_2};H^1(\mathbb{R}_{x_1}))}\\
&+\Vert\nu(x_2)\partial_{x_1}^2 u^1\Vert_{{L^{2} (\mathbb{R}_{x_2}};H^1(\mathbb{R}_{x_1}))}\Big)
.\end{split}
\end{equation}

Due to embedding $H^1(\mathbb{R})\hookrightarrow L^{\infty}(\mathbb{R})$, the following inequality holds
\begin{equation}\label{embed}
\Vert u\Vert_{L^{\infty}(\mathbb{R})}\leqslant C \Vert u\Vert_{H^1(\mathbb{R})}
.\end{equation}

Using this, we could estimate LHS of (\ref{3.32}) from below
\begin{equation}\label{3.34}
\begin{split}
  C \Vert\nu(x_2)(u_{,2}^{1}+u_{,1}^{2})\Vert_{{L^{\infty} (\mathbb{R}_{x_2}};{H^1 (\mathbb{R}_{x_1})})}&\leqslant \Vert\nu(x_2)(u_{,2}^{1}+u_{,1}^{2})\Vert_{{H^1 (\mathbb{R}_{x_2}};{H^1 (\mathbb{R}_{x_1})})}\\
 &\leqslant C_2 \Big(\Vert f^1\Vert_{L^2 (\mathbb{R}_{x_2};H^1(\mathbb{R}_{x_1}))}+\Vert p_{x_1}\Vert_{L^2 (\mathbb{R}_{x_2};H^1(\mathbb{R}_{x_1}))}\\
 &+\Vert\nu(x_2)\partial_{x_1}^2 u^1\Vert_{{L^{2} (\mathbb{R}_{x_2}};H^1(\mathbb{R}_{x_1}))}\Big)
.\end{split}
\end{equation}

We know that $u^2_{,1}\in H^1(\mathbb{R}_{x_2},H^k(\mathbb{R}_{x_1}))$, so we can bound $u^1_{,2}$ using triangle inequality
\begin{equation}
  \Vert\nu(x_2)u_{,2}^{1}\Vert_{{L^{\infty} (\mathbb{R}_{x_2}};{H^1 (\mathbb{R}_{x_1})})}\leqslant \Vert\nu(x_2)(u_{,2}^{1}+u_{,1}^{2})\Vert_{L^{\infty} (\mathbb{R}_{x_2};{H^1 (\mathbb{R}_{x_1})})}+\Vert\nu(x_2)u_{,1}^{2}\Vert_{L^{\infty} (\mathbb{R}_{x_2};H^1 (\mathbb{R}_{x_1}))}
.\end{equation}
We could use  the following inequality 
\begin{equation}
\Vert u_{,2}^{1}\Vert_{L^{\infty} (\mathbb{R}_{x_2};L^{\infty} (\mathbb{R}_{x_1}))} \leqslant \Big\Vert\dfrac{1}{\nu(x_2)}\Big\Vert_{L^{\infty}(\mathbb{R}_{x_2})}\Vert \nu(x_2) u_{,2}^{1}\Vert_{L^{\infty} (\mathbb{R}_{x_2};L^{\infty} (\mathbb{R}_{x_1}))}
.\end{equation}
Using triangle inequality and the above inequality we get
\begin{equation}
\begin{split}
 \Vert u_{,2}^{1}\Vert_{L^{\infty} (\mathbb{R}_{x_2};L^{\infty} (\mathbb{R}_{x_1})))} &\leqslant \Big\Vert\dfrac{1}{\nu(x_2)}\Big\Vert_{L^{\infty}(\mathbb{R}_{x_2})}\lbrace\Vert\nu(x_2)(u_{,2}^{1}+u_{,1}^{2})\Vert_{L^{\infty} (\mathbb{R}_{x_2};{H^1 (\mathbb{R}_{x_1})})}\\
 &+\Vert\nu(x_2)u_{,1}^{2}\Vert_{L^{\infty} (\mathbb{R}_{x_2};H^1 (\mathbb{R}_{x_1}))}\rbrace\\
&\leqslant C_3 \Big\Vert\dfrac{1}{\nu(x_2)}\Big\Vert_{L^{\infty}(\mathbb{R}_{x_2})}\lbrace \Vert f^1\Vert_{L^2 (\mathbb{R}_{x_2};H^1(\mathbb{R}_{x_1}))}+\Vert p_{x_1}\Vert_{L^2 (\mathbb{R}_{x_2};H^1(\mathbb{R}_{x_1}))}\\
&+c\Vert\nu(x_2)\partial_{x_1}^2 u^1\Vert_{{L^{2} (\mathbb{R}_{x_2}};H^1(\mathbb{R}_{x_1}))}+\Vert\nu(x_2)u_{,1}^{2}\Vert_{L^{\infty} (\mathbb{R}_{x_2};H^1 (\mathbb{R}_{x_1}))}\rbrace
.\end{split}
\end{equation}
It follows that $u^2_1\in L^{\infty} (\mathbb{R}_{x_2};L^{\infty} (\mathbb{R}_{x_1}))) $.
Now, consider the second row of Stokes equation:
\[-\partial_{x_2} (\nu(x)u^2_{,2})=\frac{1}{2}\,\partial_{x_1}\left[\nu(x)(u^1_{,2}+u^2_{,1}) \right]+f^2+\partial_{x_2}p .\]
%Taking the $L^2(\mathbb{R}_{x_1})$ norm we get 

%\[\Big\Vert\partial_{x_2 }\left[ \nu(x_2 )u^2_{,2}\right] \Big\Vert_{L^2 (\mathbb{R}_{x_1})}\leqslant\Vert\nu(x_2 )(u^1_{,2}+u^2_{,1})\Vert_{H^1 (\mathbb{R}_{x_1})}+\Vert \partial_{x_2} p\Vert_{L^2 (\mathbb{R}_{x_1})}+\Vert f\Vert_{L^2 (\mathbb{R}_{x_1})} .\]
We take the $L^2(\mathbb{R}_{x_2};L^2(\mathbb{R}_{x_1}))$ norm and differentiate (\ref{St1row}) by $x_1$ the above expression,
\begin{equation}\label{3.37}
\begin{split}
\Big\Vert\partial_{x_1}\partial_{x_2}\left[ \nu(x_2 )u^2_{,2}\right] \Big\Vert_{L^2 (\mathbb{R}_{x_2 };L^2  (\mathbb{R}_{x_1}))}&\leqslant C \Big(\Vert\nu(x_2 )(u^1_{,2}+u^2_{,1})\Vert_{H^1(\mathbb{R}_{x_2};H^2 (\mathbb{R}_{x_1}))}\\
&+\Vert \partial_{x_2} p\Vert_{L^2(\mathbb{R}_{x_2};H^1 (\mathbb{R}_{x_1}))}+\Vert f\Vert_{L^2 (\mathbb{R}_{x_2};H^1 (\mathbb{R}_{x_1}))}\Big)
.\end{split}
\end{equation} 
Because we know that differentiating in $x_1$ direction is regular. We know that $u^2_{,2}\in H^1 (\mathbb{R}_{x_2};H^1(\mathbb{R}_{x_1}))$, so we could improve the regularity of the above inequality
\begin{equation}
\begin{split}
\Vert \nu(x_2 )u^2_{,2} \Vert_{H^1 (\mathbb{R}_{x_2 };H^1  (\mathbb{R}_{x_1}))}&\leqslant C \Big(\Vert\nu(x_2 )(u^1_{,2}+u^2_{,1})\Vert_{H^1 (\mathbb{R}_{x_2};H^2 (\mathbb{R}_{x_1}))}\\
&+\Vert \partial_{x_2}p\Vert_{L^2 (\mathbb{R}_{x_2};H^1 (\mathbb{R}_{x_1}))}+\Vert f\Vert_{L^2 (\mathbb{R}_{x_2};H^1 (\mathbb{R}_{x_1}))}\Big)
\end{split}
\end{equation}
and using embedding (\ref{embed}), we get
\begin{equation}
C \Vert \nu(x_2 )u^2_{,2} \Vert_{L^{\infty} (\mathbb{R}_{x_2 };H^1  (\mathbb{R}_{x_1}))}\leqslant
\Vert \nu(x_2 )u^2_{,2} \Vert_{H^1 (\mathbb{R}_{x_2 };H^1  (\mathbb{R}_{x_1}))}
. \end{equation}
 As in the previous case we obtain
 \begin{equation}
\begin{split}
\Vert u^2_{,2} \Vert_{L^{\infty} (\mathbb{R}_{x_2 };L^{\infty}  (\mathbb{R}_{x_1}))}&\leqslant C_1 \Big\Vert\dfrac{1}{\nu(x_2)}\Big\Vert_{L^{\infty}(\mathbb{R}_{x_2})}\lbrace  \Vert\nu(x_2 )(u^1_{,2}+u^2_{,1})\Vert_{H^1(\mathbb{R}_{x_2};H^2 (\mathbb{R}_{x_1}))}\\
&+\Vert \partial_{x_2}p\Vert_{L^2 (\mathbb{R}_{x_2};H^1 (\mathbb{R}_{x_1}))}+\Vert f\Vert_{L^2 (\mathbb{R}_{x_2};H^1 (\mathbb{R}_{x_1}))}\rbrace
.\end{split}
\end{equation}
 It implies that $u^2_2\in L^{\infty} (\mathbb{R}_{x_2};L^{\infty} (\mathbb{R}_{x_1}))) $. From estimates of $u$ derivatives we conclude that
  $$\nabla u\in  L^{\infty} (\mathbb{R}^2).$$
\end{proof}

\subsection{ The second derivative.}
In this subsection, we introduce a lemma that gives the higher tangential regularity of the solution to Stokes problem. We first study the tangential regularity of the Stokes system, which will be useful in the proof of Lemma \ref{higtan}. The result states
\begin{lemma}\label{l_hihgerRegSt} Let $\Omega \subset \mathbb{R}^2 $ be an open bounded domain of class $C^2$, $S\subset \Omega$ with the boundary $\partial{S}\in C^2 $, and
let $X$ be a vector field satisfying (\ref{X}). Assume 
$f, \partial_X f, \partial^2_X f\in L^2 (\Omega)$, and let $u$ solve (\ref{stokes_eq})-(\ref{div_eq}).
Then  $\partial^2_{X} u\in H^1(\Omega)$, with an estimate
\begin{equation}\label{ineq:gencase}
\Vert\nabla \partial^2_X u\Vert_{L^2}\leqslant C_2\left(  \Vert f\Vert_{L^2}+ \Vert \partial_X f\Vert_{L^2}+\Vert \partial^2_X f\Vert_{L^2}\right) 
\end{equation}   
where $C_2 :=C_2 (\Omega ,X)$.
\end{lemma}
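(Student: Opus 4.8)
The plan is to iterate the argument of Lemma~\ref{lem_trSt} one more time, differentiating along $X$ the already transformed first-order equation \eqref{eq:3.7_eq} rather than the original Stokes system. Set $v:=\partial_X u$ and $q:=\partial_X p$; equation \eqref{eq:3.7_eq} has the structure of a Stokes-type system for $(v,q)$ with right-hand side built from $\partial_X f$, $\nabla u$ and $p$, and with the modified constraint \eqref{div_X} for $\mathrm{div}\,v$. Applying $\partial_X$ to \eqref{eq:3.7_eq} and using, with $w=\partial_X u$, the commutator identities $\partial_X\,{\rm div\,}[\nu(x)\mathbf{D}w]={\rm div\,}[\nu(x)\,\partial_X\mathbf{D}w]-\sum_i X^i_{,k}\partial_{x_i}(\nu(x)\mathbf{D}w)$, $\partial_X(\mathbf{D}w)=\mathbf{D}(\partial_X w)-\frac12(\nabla w\,\nabla X+\nabla X^T\nabla w^T)$, together with $\partial_X\nabla(\partial_X p)=\nabla(\partial_X^2 p)-\sum_i X^i_{,k}\partial_{x_i}(\partial_X p)$, I obtain a system of the form
\[
-{\rm div\,}\!\left[\nu(x)\mathbf{D}(\partial_X^2 u)\right]+\nabla(\partial_X^2 p)=\partial_X^2 f+\mathcal{R},
\]
where $\mathcal{R}$ collects terms that are, schematically, first derivatives of $\nu\mathbf{D}(\partial_X u)$, $\nu\mathbf{D}u$, $\partial_X p$ and $p$, multiplied by $C^1$ (respectively $C^0$) coefficients coming from $\nabla X$ and $\nabla^2 X$. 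The point is that every factor occurring in $\mathcal{R}$ is already controlled in $L^2$: $\nabla u$ and $p$ by the basic energy estimates \eqref{eq:bogov1} and \eqref{2.3}, $\nabla\partial_X u$ by \eqref{eq:tan_regSt}, and $\partial_X p$ by \eqref{eq:bogov2_eq}.

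Next I estimate the new pressure $\partial_X^2 p$ in $L^2$ by the Bogovskii argument, exactly parallel to the proof of Lemma~\ref{lem_trSt}. I introduce the functional $\mathcal{F}_2(\psi)$ obtained by pairing the displayed equation with $\psi\in H^1_0(\Omega)$ and integrating by parts; the adjoint $\partial_X^*\psi=-\partial_{x_i}(X^i\psi)$ again moves $\partial_X^2$ off the pressure at the cost of commutator contributions already present in $\mathcal{F}_2$. Lemma~\ref{lem_bogovskii} (Theorem III.3.1 of [\ref{galdi_book}]) then produces $\psi$ with ${\rm div\,}\psi=\partial_X^2 p-\frac{1}{|\Omega|}\int_\Omega \partial_X^2 p$ and $\|\psi\|_{H^1}\le C_b\|\partial_X^2 p\|_{L^2}$, the mean value being bounded by an integration by parts against ${\rm div\,}X$ as in $(*)$. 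Testing with this $\psi$ and using Hölder and Poincaré together with the earlier bounds yields $\Vert \partial_X^2 p\Vert_{L^2}\le C\big(\Vert f\Vert_{L^2}+\Vert\partial_X f\Vert_{L^2}+\Vert\partial_X^2 f\Vert_{L^2}\big)$.

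Finally I test the equation for $\partial_X^2 u$ against $\partial_X^2 u$ itself. The boundary conditions propagate: since $X\cdot n=0$ on $\partial\Omega$ we have $\partial_X u|_{\partial\Omega}=\partial_\tau u|_{\partial\Omega}=0$ as in \eqref{bc}, and iterating gives $\partial_X^2 u|_{\partial\Omega}=0$, so Korn's inequality \eqref{eq:korn2_eq} applies to $\partial_X^2 u$ and bounds $\Vert\nabla\partial_X^2 u\Vert_{L^2}^2$ from below by $\int_\Omega \nu(x)|\mathbf{D}(\partial_X^2 u)|^2$. On the right, the pressure pairing is handled by integrating by parts and using the twice-differentiated analogue of \eqref{div_X} for ${\rm div\,}(\partial_X^2 u)$, together with the pressure bound just obtained; every other term is estimated by Hölder and Young's inequality with a small parameter $\epsilon$, the resulting $\epsilon\Vert\nabla\partial_X^2 u\Vert_{L^2}^2$ contributions being absorbed into the left-hand side. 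Substituting \eqref{eq:tan_regSt}, \eqref{eq:bogov2_eq} and \eqref{eq:bogov1}--\eqref{2.3} for the remaining $L^2$ norms gives \eqref{ineq:gencase}.

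I expect the main obstacle to be bookkeeping: after two differentiations along $X$ the remainder $\mathcal{R}$ contains a large number of contributions, and one must verify that none of them requires control of a quantity stronger than $\nabla\partial_X u\in L^2$, $\partial_X p\in L^2$ or $p,\nabla u\in L^2$ — in particular that the terms of the form ${\rm div\,}[\nu(x)(\cdots)\nabla\partial_X u]$ which look second order only ever appear under the integral paired with $\partial_X^2 u$, producing $\Vert\nabla\partial_X u\Vert_{L^2}\Vert\nabla\partial_X^2 u\Vert_{L^2}$, which Young splits favourably. A secondary technical point is justifying these manipulations at the level of difference quotients, since a priori $\partial_X^2 p$ and $\nabla\partial_X^2 u$ exist only a posteriori; this is routine and mirrors the argument already underlying Lemma~\ref{lem_trSt}.
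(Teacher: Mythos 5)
Your proposal follows essentially the same route as the paper: you differentiate the already transformed first-order system \eqref{eq:3.7_eq} once more along $X$, control the new pressure $\partial_X^2 p$ by the same Bogovskii construction (including the mean-value bound via integration by parts against ${\rm div\,}X$), and then test with $\partial_X^2 u$, using the propagated boundary condition $\partial_X^2 u|_{\partial\Omega}=0$ to invoke Korn's inequality and Young's inequality with small $\epsilon$ to absorb the gradient terms. This matches the paper's proof step for step, so the proposal is correct and not a genuinely different argument.
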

\begin{proof}
Let us take the second derivative along tangential vector field from the Stokes equation, i.e. we take derivative $\partial_{X}$ from (\ref{eq:3.7_eq}).
\begin{equation}
\begin{split}\label{eq:gen2_eq}
-{\rm div\,}\big[&\nu (x) \mathbf{D}(\partial^2_X u) \big] 
+\frac{1}{2}{\rm div\,}(\nu(x)\nabla^T X \nabla^T \partial_X u+\nu(x)\nabla \partial_X u \nabla X)+\sum_{i}X^i_{,k}\partial_{x_i}\left( \nu(x)\mathbf{D}(\partial_X u)\right) \\
+&\partial_X\left[ \frac{1}{2}{\rm div\,}(\nu(x)\nabla^T X \nabla^T u+\nu(x)\nabla u \nabla X)+\sum_{i}X^i_{,k}\partial_{x_i}\left( \nu(x)\mathbf{D}u\right)\right] \\
+&\partial_X \left[ \nabla (\partial_X p)-\sum_{i}X^i_{,k}\partial_{x_i} p\right] =\partial^2_X  f
.\end{split}
\end{equation}
We will rewrite the above expression term by term. 
Straightforward calculations of the $4^{th}$ term of the LHS of (\ref{eq:gen2_eq}) gives
\begin{equation}
\begin{split}
\partial_X\big[& \frac{1}{2}{\rm div\,}(\nu(x)\nabla^T X \nabla^T u+\nu(x)\nabla u \nabla X)\big]= \frac{1}{2}{\rm div\,}(\nu(x)\partial_X (\nabla^T X \nabla^T u+\nabla u \nabla X))\\
&- \frac{1}{2} X^s_{,k}\partial_{x_s}(\nu(x)(\nabla^T X \nabla^T u+\nabla u \nabla X))= \frac{1}{2}{\rm div\,}\lbrace\nu(x) (\nabla^T X \nabla^T \partial_X u+\nabla\partial_X u \nabla X)\\
+&\nu(x) (X^s X^j_{,is}\, \nabla^T u+\nabla u\, X^s X^i_{,js})-\nu(x) ( X^j_{,i}X^s_{,i}\, \nabla^T u+\nabla u \,X^i_{,j}X^s_{,j})\rbrace\\
-& \frac{1}{2} X^s_{,k}\partial_{x_s}(\nu(x)(\nabla^T X \nabla^T u+\nabla u \nabla X))
.\end{split}
\end{equation}
For the $5^{th}$ term we have
\begin{equation}
\begin{split}\label{eq:5t_eq}
\partial_X\left[X^i_{,k}\partial_{x_i}\left( \nu(x)\mathbf{D}u\right)\right] & = X^s X^i_{,ks}\partial_{x_i}\left( \nu(x)\mathbf{D}u\right) +X^i_{,k}\partial_{x_i}\left( \nu(x)\partial_X\mathbf{D}u\right)-X^i_{,k}X^s_{,i}\partial_{x_s}\left( \nu(x)\mathbf{D}u\right)\\
= X^s & X^i_{,ks}\partial_{x_i}\left( \nu(x)\mathbf{D}u\right)-X^i_{,k}X^s_{,i}\partial_{x_s}\left( \nu(x)\mathbf{D}u\right)\\
&+X^i_{,k}\left( \nu(x)\mathbf{D}\partial_X u\right)-X^i_{,k}\partial_{x_i}(\nu(x)(\nabla^T X \nabla^T u+\nabla u \nabla X))
.\end{split}
\end{equation}

Also, the $6^{th}$ term with pressure
\begin{equation}
\partial_X (\nabla \partial_X p)-X^s\partial_{x_s}(X^i_{,k}\partial_{x_i} p)=\nabla \partial_X^2 p-X^s_{,k}\partial_{x_s} \partial_X p-X^s X^i_{,ks}\partial_{x_i} p- X^i_{,k}\partial_{x_i}\partial_X p+X^s_{,i}X^i_{,k}\partial_{x_s} p
.\end{equation}
Multiplying the equation (\ref{eq:gen2_eq}) by $\psi\in C^{\infty}_0(\Omega) $  and integrating, we get
 
\begin{equation}
\begin{split}\label{eq:g_eq}
(\nu (x)& \mathbf{D}(\partial^2_X u),\mathbf{D}(\psi)) +( \nu(x)(\nabla^T X \nabla^T \partial_X u+\nabla \partial_X u \nabla X),\psi)\\
&+\frac{1}{2}(\nu(x) (X^s X^j_{,is}\, \nabla^T u+\nabla u\, X^s X^i_{,js}),\psi)-\frac{1}{2}(\nu(x) ( X^j_{,i}X^s_{,i}\, \nabla^T u+\nabla u \,X^i_{,j}X^s_{,j}),\psi)\\
&-( \nu(x)\mathbf{D}(\partial_X u), \partial_{x_i}(X^i_{,k}\psi))+\frac{3}{2}(\nu(x) (\nabla^T X\nabla^T u+\nabla u \nabla X),\partial_{x_s}(X^s_{,k}\psi))\\
&-(\partial_X^2 p,{\rm div\,}\psi)+(\partial_X p, \partial_{x_s}(X^s_{,k}\psi))+(p, \partial_{x_i}(X^s X^i_{,ks}\psi))\\
&+(\partial_X p, \partial_{x_i}(X^i_{,k}\psi))-(p, \partial_{x_i}(X^s_{,i} X^i_{,k}\psi))=(\partial^2_X f,\psi)
.\end{split}
\end{equation}
The weak formulation of \eqref{eq:gen2_eq}, tested with $\partial^2_{X}v\in H^{1}_{0} (\Omega)$ gives
\begin{equation}
\begin{split}\label{eq:gweak3_eq}
\int_{\Omega}&\nu (x) \mathbf{D}(\partial^2_X u) \mathbf{D}(\partial^2_X v)\,dx + \int_{\Omega}\lbrace \nu(x)(\nabla^T X \nabla^T \partial_X u+\nabla \partial_X u \nabla X)\\
&+\frac{1}{2}\nu(x) (X^s X^j_{,is}\, \nabla^T u+\nabla u\, X^s X^i_{,js})-\frac{1}{2}\nu(x) ( X^j_{,i}X^s_{,i}\, \nabla^T u+\nabla u \,X^i_{,j}X^s_{,j})\rbrace:\partial^2_X v\, dx\\
&+\int_{\Omega}X^i_{,k}\partial_{x_i}\left( \nu(x)\mathbf{D}(\partial_X u)\right) \partial^2_X v\, dx-\frac{1}{2}\int_{\Omega}X^s_{,k}\partial_{x_s}(\nu(x) (\nabla^T X\nabla^T u+\nabla u \nabla X))\partial^2_X v\, dx\\
&-\int_{\Omega}X^i_{,k}\partial_{x_i}(\nu(x) (\nabla^T X\nabla^T u+\nabla u \nabla X))\partial^2_X v\, dx\\
&-\int \partial_X^2 p\,{\rm div\,}\partial^2_X v\, dx+\int_{\Omega} \partial_X p \partial_{x_s}(X^s_{,k}\partial^2_X v)\, dx+\int_{\Omega} p\, \partial_{x_i}(X^s X^i_{,ks}\partial^2_X v)\, dx\\
&+\int_{\Omega}\partial_X p\, \partial_{x_i}(X^i_{,k}\partial^2_X v)\, dx-\int_{\Omega} p\, \partial_{x_i}(X^s_{,i} X^i_{,k}\partial^2_X v)\, dx=\int_{\Omega}\partial^2_X f\partial^2_X v\, dx
.\end{split}
\end{equation}
\textbf{The pressure term: Bogovskii type estimate}.  We are going to estimate the pressure term $\partial_X^2 p $ in the view of Lemma \ref{lem_bogovskii}. We need to show that $\partial_X^2 p\in L^2(\Omega) $ such that equation \eqref{eq:gweak3_eq} holds for every $\psi\in C^{\infty}_0(\Omega) $. Moreover, the pressure term is well defined 
\[ \left| \int_{\Omega} \partial_{X}^2 p\, dx\right| =\left| -\int_{\Omega} {\rm div\,}X \partial_{X} p \,dx +\int_{\partial\Omega} X^i n^i \partial_{X} p\, ds\right| \leqslant \Vert{\rm div\,}X\Vert_{L^2} \Vert \partial_{X} p\Vert_{L^2} .\;\;\;\, (**)\]
We've got the above inequality by knowing that $\partial_{X}p$ is bounded in $L^2(\Omega)$ according to the proof of Lemma \ref{lem_trSt}, and also using integration by parts and H\"older inequality.

Let us consider the functional

\begin{equation}
\begin{split}
\mathcal{F}(\psi)& =(\nu (x) \mathbf{D}(\partial^2_X u),\mathbf{D}(\psi)) +( \nu(x)(\nabla^T X \nabla^T \partial_X u+\nabla \partial_X u \nabla X),\psi)\\
&+\frac{1}{2}(\nu(x) (X^s X^j_{,is}\, \nabla^T u+\nabla u\, X^s X^i_{,js}),\psi)-\frac{1}{2}(\nu(x) ( X^j_{,i}X^s_{,i}\, \nabla^T u+\nabla u \,X^i_{,j}X^s_{,j}),\psi)\\
&-( \nu(x)\mathbf{D}(\partial_X u), \partial_{x_i}(X^i_{,k}\psi))+\frac{3}{2}(\nu(x) (\nabla^T X\nabla^T u+\nabla u \nabla X),\partial_{x_s}(X^s_{,k}\psi))\\
&+2(\partial_X p, \partial_{x_s}(X^s_{,k}\psi))+(p, \partial_{x_i}(X^s X^i_{,ks}\psi))\\
&-(p, \partial_{x_i}(X^s_{,i} X^i_{,k}\psi))-(\partial^2_X f,\psi)
 .\end{split}
\end{equation}
For all $\psi\in H^{1}_0 (\Omega)$. 
Thinking on the level of the weak formulation of Stokes system with a test function $(\partial_{X}^2)^{*}\psi \in C^{\infty}_0(\Omega)$ we get
 \[(p,{\rm div\,}(\partial_{X}^2)^{*}\psi)=\langle\partial_X^2 \nabla p, \psi \rangle  \]
 where $(\partial_{X}^2)^{*}\psi:= \partial_{x_k}(X^{k}\, \partial_{x_i}(X^{i}\, \psi))$. 
We deduce by Lemma \ref{lem_bogovskii} there exists a uniquely determined $\partial_{X}^2 p\in L^2 (\Omega)$ with bounded
$\frac{1}{\vert\Omega\vert}\int _{\Omega}\partial_{X}^2 p$, 
such that 
\begin{equation}\label{fcl_2HR}
\mathcal{F}(\psi)=(\partial_{X}^2 p,{\rm div\,}\psi)
\end{equation}
for all $\psi\in H^{1}_0 (\Omega)$.
% From equation (\ref{eq:g_eq}) and (\ref{fcl_2HR}) we find, in particular
%\[(\partial_{X}^2 p-q,{\rm div\,}\psi)=0 ,\]
% therefore $q=\partial_{X}^2 p+c$.
%If we normalize $\partial_{X}^2 p$ by condition (**) and without loss of generality we take $q=\partial_{X}^2 p$}. 
Consider the problem
\[{\rm div\,}\psi= \partial_{X}^2 p-\frac{1}{\vert\Omega\vert}\int _{\Omega}\partial_{X}^2 p =g.\]
\begin{equation}\label{eq:genpr_eq}
\psi \in H^{1}_0 (\Omega)
\end{equation}
\[\Vert\psi\Vert_{H^{1}}\leqslant C_{b_2} \Vert \partial_{X}^2 p\Vert_{L^2} \]
with $\Omega$ bounded and satisfying the cone condition. Since
$\frac{1}{\vert \Omega\vert}\int_{\Omega}\partial_{X}^2 p $ is bounded 
and
$$\int_{\Omega}g=0 ,\, g\in L^2(\Omega),\;\,\,\Vert\psi\Vert_{W^{1,2}}\leqslant C_b \Vert \partial_{X}^2 p\Vert_{L^2}$$ 
from Theorem III.3.1 ([\ref{galdi_book}]) we deduce the existence of $\psi$ solving the \eqref{eq:genpr_eq}, using such a $\psi$ as test function into the \eqref{eq:g_eq}, we have

\begin{equation}
\begin{split}
\Vert\partial_{X}^2 p\Vert_{L^2}^2 & =(\nu (x) \mathbf{D}(\partial^2_X u),\mathbf{D}(\psi)) +( \nu(x)(\nabla^T X \nabla^T \partial_X u+\nabla \partial_X u \nabla X),\psi)\\
&+\frac{1}{2}(\nu(x) (X^s X^j_{,is}\, \nabla^T u+\nabla u\, X^s X^i_{,js}),\psi)-\frac{1}{2}(\nu(x) ( X^j_{,i}X^s_{,i}\, \nabla^T u+\nabla u \,X^i_{,j}X^s_{,j}),\psi)\\
&-( \nu(x)\mathbf{D}(\partial_X u), \partial_{x_i}(X^i_{,k}\psi))+\frac{3}{2}(\nu(x) (\nabla^T X\nabla^T u+\nabla u \nabla X),\partial_{x_s}(X^s_{,k}\psi))\\
&+2(\partial_X p, \partial_{x_s}(X^s_{,k}\psi))+(p,X^s \partial_{x_i}( X^i_{,ks}\psi))\\
&-(p, X^i_{,k} \partial_{x_i}(X^s_{,i} \psi))-(\partial^2_X f,\psi)
.\end{split}
\end{equation}
By applying H\"older, Poincar\'e inequalities to the above equation, we get
%\begin{equation}
%\begin{split}
%\Vert\partial_{X}^2 p\Vert_{L^2}^2 &\leqslant \Vert\nu (x) \mathbf{D}(\partial^2_X u)\Vert_{L^2} \Vert\mathbf{D}(\psi)\Vert_{L^2} +\Vert \nu(x)(\nabla^T X \nabla^T \partial_X u+\nabla \partial_X u \nabla X)\Vert_{L^2} \Vert\psi\Vert_{L^2}\\
%&+\frac{1}{2}\Vert\nu(x) (X^s X^j_{,is}\, \nabla^T u+\nabla u\, X^s X^i_{,js})\Vert_{L^2} \Vert\psi\Vert_{L^2}+\frac{1}{2}\Vert\nu(x) ( X^j_{,i}X^s_{,i}\, \nabla^T u+\nabla u \,X^i_{,j}X^s_{,j})\Vert_{L^2} \Vert\psi\Vert_{L^2}\\
%&+\Vert \nu(x)\mathbf{D}(\partial_X u)\Vert_{L^2} \Vert\partial_{x_i}(X^i_{,k}\psi)\Vert_{L^2}+\frac{3}{2}\Vert\nu(x) (\nabla^T X\nabla^T u+\nabla u \nabla X)\Vert_{L^2} \Vert\partial_{x_s}(X^s_{,k}\psi)\Vert_{L^2}\\
%&+2\Vert\partial_X p\Vert_{L^2} \Vert\partial_{x_s}(X^s_{,k}\psi)\Vert_{L^2}+\Vert p\Vert_{L^2} \Vert X^s \partial_{x_i}( X^i_{,ks}\psi)\Vert_{L^2}\\
%&+\Vert p\Vert_{L^2} \Vert X^i_{,k} \partial_{x_i}(X^s_{,i} \psi)\Vert_{L^2}+\Vert\partial^2_X f\Vert_{L^2} \Vert\psi\Vert_{L^2}
%.\end{split}
%\end{equation}

%By the above assumptions on $\psi$ we could use the Poincar\'e inequality, that the above inequality takes the following form

\begin{equation}
\begin{split}
\Vert\partial_{X}^2 p\Vert_{L^2}^2 &\leqslant \lbrace \Vert\nu (x) \mathbf{D}(\partial^2_X u)\Vert_{L^2}  +c_p \Vert \nu(x)(\nabla^T X \nabla^T \partial_X u+\nabla \partial_X u \nabla X)\Vert_{L^2} \\
&+\frac{1}{2}c_p \Vert\nu(x) (X^s X^j_{,is}\, \nabla^T u+\nabla u\, X^s X^i_{,js})\Vert_{L^2} +\frac{1}{2}c_p \Vert\nu(x) ( X^j_{,i}X^s_{,i}\, \nabla^T u+\nabla u \,X^i_{,j}X^s_{,j})\Vert_{L^2} \\
&+(c_{X"}c_p +c_{X'})\Vert \nu(x)\mathbf{D}(\partial_X u)\Vert_{L^2} +\frac{3}{2}(c_{X"}c_p +c_{X'})\Vert\nu(x) (\nabla^T X\nabla^T u+\nabla u \nabla X)\Vert_{L^2} \\
&+2(c_{X"}c_p +c_{X'})\Vert\partial_X p\Vert_{L^2} +(c_{X}c_{X"}c_p +c_{X}c_{X"}+c_{X'}c_{X"}c_p+c_{X'}^2)\Vert p\Vert_{L^2} \\
&+c_{p}\Vert\partial^2_X f\Vert_{L^2} \rbrace\Vert\nabla\psi\Vert_{L^2}
 .\end{split}
\end{equation}
Then using the inequality from (\ref{eq:genpr_eq}), we could reduce both sides of the above inequality by $\Vert\partial_{X}^2 p\Vert$. Also, by applying (\ref{eq:tan_regSt}) and (\ref{2.3}) we deduce the estimate for the pressure term 
%\begin{equation}
%\begin{split}
%\Vert\partial_{X}^2 p\Vert_{L^2} &\leqslant C'_{b_2}\lbrace \Vert\nu (x) \mathbf{D}(\partial^2_X u)\Vert_{L^2}  +c_p \Vert \nu(x)(\nabla^T X \nabla^T \partial_X u+\nabla \partial_X u \nabla X)\Vert_{L^2} \\
%&+\frac{1}{2}c_p \Vert\nu(x) (X^s X^j_{,is}\, \nabla^T u+\nabla u\, X^s X^i_{,js})\Vert_{L^2} +\frac{1}{2}c_p \Vert\nu(x) ( X^j_{,i}X^s_{,i}\, \nabla^T u+\nabla u \,X^i_{,j}X^s_{,j})\Vert_{L^2} \\
%&+(c_{X"}c_p +c_{X'})\Vert \nu(x)\mathbf{D}(\partial_X u)\Vert_{L^2} +\frac{3}{2}(c_{X"}c_p +c_{X'})\Vert\nu(x) (\nabla^T X\nabla^T u+\nabla u \nabla X)\Vert_{L^2} \\
%&+2(c_{X"}c_p +c_{X'})\Vert\partial_X p\Vert_{L^2} +(c_{X}c_{X"'}c_p +c_{X}c_{X"}+c_{X'}c_{X"}c_p+c_{X'}^2)\Vert p\Vert_{L^2} +c_{p}\Vert\partial^2_X f\Vert_{L^2} \rbrace
% .\end{split}
%\end{equation}
%Knowing that the given vector-field $X$ is smooth and bounded we rewrite the above inequality
%\begin{equation}
%\begin{split}
%\Vert\partial_{X}^2 p \Vert_{L^2} &\leqslant C'_{b_2}\lbrace \Vert\nu (x) \mathbf{D}(\partial^2_X u)\Vert_{L^2}  +C_2 \Vert \nabla \partial_X u \Vert_{L^2} + C_3 \Vert\nabla u\Vert_{L^2} +c_3 \Vert \nu(x)\mathbf{D}(\partial_X u)\Vert_{L^2}  \\
%&+2c_3 \Vert\partial_X p\Vert_{L^2} + C_4 \Vert p\Vert_{L^2} +c_{p}\Vert\partial^2_X f\Vert_{L^2} \rbrace .
%\end{split}
%\end{equation}
%By applying the bounds (\ref{eq:tan_regSt}) and (\ref{2.3}) to the above inequality, we deduce the estimate for the pressure term  
\begin{equation}\label{bogov3}
\Vert\partial_{X}^2 p\Vert_{L^2} \leqslant C'_{b_2}\lbrace \Vert\nu (x) \mathbf{D}(\partial^2_X u)\Vert_{L^2} + B_1\Vert f\Vert_{L^2}+F_1 \Vert \partial_X f \Vert_{L^2} +c_{p}\Vert\partial^2_X f\Vert_{L^2} \rbrace
 .\end{equation}

%Recall (\ref{div_eq}), and also
%\begin{equation*}  
%  \partial_X \rm div\, u={\rm div\,}(\partial_X u)-X^i_{,k}\partial_{x_i} u.
%\end{equation*} 
  Let us take the second directional derivative from the (\ref{div_X}) 
\begin{equation}
\begin{split} 
\partial_X^2  {\rm div\,} u &={\rm div\,}\partial_X^2 u - X^s_{,k}\partial_X u-X^s X^i_{,ks}\partial_{x_i} u- X^i_{,k} X^s \partial_{x_s} \partial_{x_i} u\\
&={\rm div\,}\partial_X^2 u - X^s_{,k}\partial_X u-X^s X^i_{,ks}\partial_{x_i} u-X^i_{,k} \partial_{x_i} \partial_{X} u+X^i_{,k} X^s_{,i} \partial_{x_s} u
\end{split} 
\end{equation}
so we get 
\begin{equation}\label{div2}
{\rm div\,}\partial_X^2 u = X^s_{,k}\partial_X u+X^s X^i_{,ks}\partial_{x_i} u+X^i_{,k} \partial_{x_i} \partial_{X} u-X^i_{,k} X^s_{,i} \partial_{x_s} u
 .\end{equation}
Now, we return to the \eqref{eq:gweak3_eq} with $u=v$, by applying H\"older inequality, we get
\begin{equation}
\begin{split}\label{eq:gwb_eq}
\int_{\Omega}&\vert\nu (x)\vert \mathbf{D}(\partial^2_X u)\vert^2\, dx
\leqslant \Vert\partial_{X}^2 p\Vert_{L^2}\Vert{\rm div\,}\partial^2_X u\Vert_{L^2}+\Vert \nu(x)(\nabla^T X \nabla^T \partial_X u+\nabla \partial_X u \nabla X)\Vert_{L^2} \Vert\partial^2_X u\Vert_{L^2}\\
&+\frac{1}{2}\Vert\nu(x) (X^s X^j_{,is}\, \nabla^T u+\nabla u\, X^s X^i_{,js})\Vert_{L^2} \Vert\partial^2_X u\Vert_{L^2}+\frac{1}{2}\Vert\nu(x) ( X^j_{,i}X^s_{,i}\, \nabla^T u+\nabla u \,X^i_{,j}X^s_{,j})\Vert_{L^2} \Vert\partial^2_X u\Vert_{L^2}\\
&+\Vert \nu(x)\mathbf{D}(\partial_X u)\Vert_{L^2} \Vert\partial_{x_i}(X^i_{,k}\partial^2_X u)\Vert_{L^2}+\frac{3}{2}\Vert\nu(x) (\nabla^T X\nabla^T u+\nabla u \nabla X)\Vert_{L^2} \Vert\partial_{x_s}(X^s_{,k}\partial^2_X u)\Vert_{L^2}\\
&+2\Vert\partial_X p\Vert_{L^2} \Vert\partial_{x_s}(X^s_{,k}\partial^2_X u)\Vert_{L^2}+\Vert p\Vert_{L^2} \Vert X^s \partial_{x_i}( X^i_{,ks}\partial^2_X u)\Vert_{L^2}\\
&+\Vert p\Vert_{L^2} \Vert X^i_{,k} \partial_{x_i}(X^s_{,i} \partial^2_X u)\Vert_{L^2}+\Vert\partial^2_X f\Vert_{L^2} \Vert\partial^2_X u\Vert_{L^2}
 .\end{split}
\end{equation}
%Taking into account (\ref{div2}), we rewrite the above inequality 
%\begin{equation}
%\begin{split}
%\int_{\Omega}\vert\nu (x)\vert \mathbf{D}(\partial^2_X u)\vert^2\, dx
%&\leqslant \Vert\partial_{X}^2 p\Vert_{L^2}\lbrace\Vert X^s_{,k}\partial_X u\Vert_{L^2}+\Vert X^s X^i_{,ks}\partial_{x_i} u\Vert_{L^2}\\
%&+\Vert X^i_{,k} \partial_{x_i} \partial_{X} u\Vert_{L^2} +\Vert X^i_{,k} X^s_{,i} \partial_{x_s} u\Vert_{L^2}\rbrace\\
 %& +\lbrace C_2 \Vert \nabla \partial_X u \Vert_{L^2} + C_3 \Vert\nabla u\Vert_{L^2} +c_3 \Vert \nu(x)\mathbf{D}(\partial_X u)\Vert_{L^2}\\
 %& +2c_3 \Vert\partial_X p\Vert_{L^2} + C_4 \Vert p\Vert_{L^2} +c_{p}\Vert\partial^2_X f\Vert_{L^2} \rbrace\Vert\nabla\partial^2_X u\Vert_{L^2} 
  %.\end{split}
%\end{equation}
 Here, using (\ref{div2}), (\ref{bogov3}) and applying Young's inequality with small $\epsilon$ to the above inequality, we get
%\begin{equation}
%\begin{split}
%\int_{\Omega}\vert\nu (x)\vert \mathbf{D}(\partial^2_X u)\vert^2\, dx
% &\leqslant C'_{b_2 }\lbrace \Vert\nu (x) \mathbf{D}(\partial^2_X u)\Vert_{L^2} + B_1\Vert f\Vert_{L^2}+F_1 \Vert \partial_X f \Vert_{L^2} \\
% &+c_{p}\Vert\partial^2_X f\Vert_{L^2} \rbrace \lbrace c_{X'} \Vert\partial_X u\Vert_{L^2}+c_{X}c_{X"}\Vert \nabla u\Vert_{L^2}\\
% &+c_{X'}\Vert \nabla \partial_{X} u\Vert_{L^2}+c_{X'^2}\Vert \nabla u\Vert_{L^2}\rbrace\\
 %& +\lbrace B_1\Vert f\Vert_{L^2}+F_1 \Vert \partial_X f \Vert_{L^2} +c_{p}\Vert\partial^2_X f\Vert_{L^2} \rbrace\Vert\nabla\partial^2_X u\Vert_{L^2}
 %,\end{split}
%\end{equation}
%applying Young's inequality with $\epsilon$ small, we get 
\begin{equation}
\begin{split}
\int_{\Omega}\vert\nu (x) \mathbf{D}(\partial^2_X u) \vert^2\, dx
&\leqslant C'_{b_2 }\epsilon \Vert\nu (x) \mathbf{D}(\partial^2_X u)\Vert_{L^2}^2 \\
&+(C'_{b_2 }\epsilon +C(\epsilon))\lbrace B_1\Vert f\Vert_{L^2}^2 +F_1 \Vert \partial_X f \Vert_{L^2}^2 + \Vert\partial^2_X f\Vert_{L^2}^2\rbrace\\
& + C(\epsilon)\lbrace (c_{X'}c_p +c_{X'})(B_1\Vert f\Vert_{L^2}^2+F_1 \Vert \partial_X f \Vert_{L^2}^2) \rbrace\\
& +\epsilon\Vert\nabla\partial^2_X u\Vert_{L^2}^2
 .\end{split}
\end{equation}
Transferring the $1^{st}$ term of the RHS of the above inequality to the LHS we get
\begin{equation}\label{3.62}
\int_{\Omega}\vert\nu (x) \mathbf{D}(\partial^2_X u)\vert^2\, dx \leqslant  B_2\Vert f\Vert_{L^2}^2+F_2 \Vert \partial_X f \Vert_{L^2}^2 +A_{2}\Vert\partial^2_X f\Vert_{L^2}^2 +\epsilon\Vert\nabla\partial^2_X u\Vert_{L^2}^2
 .\end{equation}
 In the same way as in (\ref{bc}), we deduce that $\partial_{X}^2 u\vert_{\partial \Omega} =0$. Then Korn inequality holds 
 \begin{equation}\label{eq:korn3_eq}
\int_{\Omega} \nu (x)\vert\mathbf{D}(\partial_X^2 u)\vert^2 dx \geqslant C\int_{\Omega} \vert\nabla\partial_X^2 u\vert^2 dx
.\end{equation} 
 Implementing (\ref{eq:korn3_eq}) to (\ref{3.62}) gives required inequality (\ref{ineq:gencase}).
%We could rewrite the Korn inequality
%\begin{equation}\label{korn3}
%\begin{split}
%\

\end{proof}

\section{Proof of Theorem 2.3}

\subsection{Tangential Regularity.}
Tangential regularity result of approximate Navier-Stokes equations (\ref{NS}) reads

\begin{lemma}\label{l_PropNS} Let $\Omega \subset \mathbb{R}^2 $ be an open bounded domain of class $C^2$, $S\subset \Omega$ with the boundary $\partial{S}\in C^2 $,
and
let $X$ be a vector field satisfying (\ref{X}). Assume 
$f, \partial_X f \in L^2 (\Omega)$, and let $u$ solve (\ref{NS})-(\ref{NSdiv}). Then
 $\partial_{X} u\in H^1(\Omega)$, with an estimate
\begin{equation}\label{4.1}
 \Vert \nabla \partial_{X} u\Vert_{L^2}\leqslant C \left( \Vert f\Vert_{L^2} +\Vert \partial_{X} f\Vert_{L^2} +\Vert f\Vert_{L^2}^2  \right) 
\end{equation}
where $C :=C(\Omega ,X)$.

\end{lemma}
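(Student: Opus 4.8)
The plan is to mimic the proof of Lemma~\ref{lem_trSt}, now treating the convective term $(u\cdot\nabla)u$ as an extra right-hand side. Since the viscous and pressure parts of \eqref{NS} coincide with those of the Stokes system, differentiating \eqref{NS} along $X$ produces exactly \eqref{eq:3.7_eq} with one additional term, $\partial_X\big[(u\cdot\nabla)u\big]$, on the left; using ${\rm div\,}u=0$ one writes
\[
\partial_X\big[(u\cdot\nabla)u\big]=\big((\partial_X u)\cdot\nabla\big)u+(u\cdot\nabla)(\partial_X u)-\big(u^i\,\partial_{x_i}X^j\,\partial_{x_j}u\big).
\]
First I would redo the Bogovskii estimate for $\partial_X p$: testing the differentiated equation with the corrector $\psi$ solving ${\rm div\,}\psi=\partial_X p-\tfrac1{|\Omega|}\int_\Omega\partial_X p$, $\Vert\psi\Vert_{H^1}\leqslant C\Vert\partial_X p\Vert_{L^2}$, reproduces \eqref{eq:bogov2_eq} with the extra contribution $\big(\partial_X[(u\cdot\nabla)u],\psi\big)$. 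Estimating the three pieces above by H\"older, the two-dimensional interpolation (Ladyzhenskaya) inequality $\Vert w\Vert_{L^4}\leqslant C\Vert w\Vert_{L^2}^{1/2}\Vert\nabla w\Vert_{L^2}^{1/2}$, the Poincar\'e inequality and $\Vert\partial_X u\Vert_{L^2}\leqslant C\Vert\nabla u\Vert_{L^2}$, one bounds this term by $C\big(\Vert\nabla u\Vert_{L^2}^{2}+\Vert\nabla u\Vert_{L^2}^{3/2}\Vert\nabla\partial_X u\Vert_{L^2}^{1/2}+\Vert\nabla u\Vert_{L^2}\Vert\nabla\partial_X u\Vert_{L^2}\big)\Vert\psi\Vert_{H^1}$, so that \eqref{eq:bogov2_eq} acquires exactly these three terms on its right-hand side.

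Next I would run the energy estimate: test the differentiated equation with $\partial_X v=\partial_X u$, which lies in $H^1_0(\Omega)$ because $X\cdot n=0$ on $\partial\Omega$ (cf. \eqref{bc}). The viscous term is bounded below by Korn's inequality \eqref{eq:korn2_eq}, and all the linear terms are treated exactly as in the proof of Lemma~\ref{lem_trSt}, producing the right-hand side of \eqref{eq:3.13_eq} with $v=u$; note that the pressure then enters only through $\Vert\partial_X p\Vert_{L^2}\Vert\nabla u\Vert_{L^2}$ and $\Vert p\Vert_{L^2}\Vert\nabla\partial_X u\Vert_{L^2}$, since ${\rm div\,}(\partial_X u)$ involves only $\nabla u$. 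The heart of the matter is the new term $\big(\partial_X[(u\cdot\nabla)u],\partial_X u\big)$: its middle piece $\int_\Omega(u\cdot\nabla)(\partial_X u)\cdot\partial_X u\,dx$ vanishes by ${\rm div\,}u=0$ together with $\partial_X u\vert_{\partial\Omega}=0$, while the remaining two pieces are controlled, via the interpolation inequality again and Young's inequality, by $\epsilon\Vert\nabla\partial_X u\Vert_{L^2}^2+C(\epsilon)\big(\Vert\nabla u\Vert_{L^2}^2+\Vert\nabla u\Vert_{L^2}^4\big)$.

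Finally, inserting the modified \eqref{eq:bogov2_eq} and the basic bounds $\Vert\nabla u\Vert_{L^2}\leqslant C\Vert f\Vert_{L^2}$ and $\Vert p\Vert_{L^2}\leqslant C(\Vert f\Vert_{L^2}+\Vert f\Vert_{L^2}^2)$ (the stationary Navier-Stokes energy and pressure estimates), using Young's inequality with small $\epsilon$ to absorb every $\Vert\nabla\partial_X u\Vert_{L^2}^2$ contribution into the left, and then Korn \eqref{eq:korn2_eq}, one arrives at $\Vert\nabla\partial_X u\Vert_{L^2}^2\leqslant C\big(\Vert f\Vert_{L^2}^2+\Vert\partial_X f\Vert_{L^2}^2+\Vert f\Vert_{L^2}^4\big)$ after collecting the various polynomial powers of $\Vert f\Vert_{L^2}$; taking square roots gives \eqref{4.1}. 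I expect the main obstacle to be precisely this nonlinear term: because the viscosity jump rules out any $H^2$-type bootstrap, $(u\cdot\nabla)u$ is a priori only in $L^{4/3}$ in two dimensions, so one cannot simply invoke Lemma~\ref{lem_trSt} with right-hand side $f-(u\cdot\nabla)u$; everything must be carried out at the level of the differentiated weak formulation, relying on the antisymmetry of the convective term and on the two-dimensional interpolation inequality — and this is exactly why the quadratic term $\Vert f\Vert_{L^2}^2$ shows up in \eqref{4.1}. As in the Stokes lemma, the computation is an a priori estimate; it is made rigorous by performing it on Galerkin approximations (or with difference quotients $D_X^h u$ in place of $\partial_X u$), for which the integrations by parts and the use of $\partial_X u\in L^4$ are legitimate and all the bounds are uniform in the parameter.
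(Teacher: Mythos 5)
Your proposal is correct and follows essentially the same route as the paper: differentiate \eqref{NS} along $X$, reuse the Stokes estimates of Lemma~\ref{lem_trSt} for the linear part, kill the middle piece of $\partial_X[(u\cdot\nabla)u]$ by the antisymmetry of the trilinear form, control the other two pieces through an $L^4$ interpolation bound and Young's inequality, and absorb the $\epsilon\Vert\nabla\partial_X u\Vert_{L^2}^2$ terms. The only (cosmetic) differences are that you use the Ladyzhenskaya inequality where the paper interpolates through $BMO$ via $H^1(\mathbb{R}^2)\hookrightarrow BMO$, and that you spell out the Bogovskii step for $\partial_X p$ and the Galerkin/difference-quotient justification, which the paper only references.
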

\begin{proof}
In this regard, we differentiate the stationary Navier-Stokes equation along the vector field 

\begin{equation}\label{PropNS}
\partial_{X}\left[ (u\cdot\nabla)u -{\rm div\,}\left[\nu(x)\mathbf{D} u\right] +\nabla{p}\right] =\partial_{X}f
\end{equation}
\begin{equation*}
\partial_{X} {\rm div\,}u=0
.\end{equation*}
We are going to show estimates for the nonlinear part of the equation, as the rest has been proved in Lemma \ref{lem_trSt}. 
\begin{equation}
\partial_{X}\left[(u\cdot\nabla)u\right]=(\partial_{X} u\cdot \nabla)u+( u\cdot \nabla)\partial_{X}u-u^i \partial_{x_i}X^l \partial_{x_l} u^k  
\end{equation}
let us separately test the nonlinear part by $\partial_{X}u$. We have
\begin{equation}\label{I}
I=\int_{\Omega} (\partial_{X} u\cdot \nabla)u\,\partial_{X} u \,dx \leqslant \Vert\partial_{X} u\Vert^2_{L^4}\Vert \nabla u\Vert_{L^2}
.\end{equation}
We use interpolation inequality for $q > p$ of type 
\[(L_p , BMO)_{1-\frac{p}{q},q}=L_{q}.\]
In our case $p=n=2,$ $q=4$
\begin{equation}\label{interpolation}
\Vert\partial_{X} u\Vert_{L^4}\leqslant C \Vert \partial_{X} u\Vert^{\frac{1}{2}}_{L^2}\Vert\partial_{X} u\Vert^{\frac{1}{2}}_{BMO}
,\end{equation}
where
\[\int\vert\partial_{X} u \vert^2 \, dx\leqslant \int \vert X\vert^2 \vert\nabla u\vert^2 \,dx\leqslant C \Vert \nabla u\Vert^2_{L^2} .\]
We implement (\ref{interpolation}) to (\ref{I}) and get that
\begin{equation}
I\leqslant C \Vert u\Vert_{H^1}\Vert \partial_{X} u\Vert_{L^2}\Vert\partial_{X} u\Vert_{BMO}\leqslant  C_1 \Vert u\Vert_{H^1}^2\Vert\partial_{X} u\Vert_{BMO}\leqslant 
C_1 \Vert u\Vert_{H^1}^2\Vert\partial_{X} u\Vert_{H^1}
.\end{equation}

From the property of trilinear form we have  $\int_{\Omega} b(v,u,u)\,dx=0$, so for the $II$ term we obtain

\begin{equation}
 II=\int_{\Omega}( u\cdot \nabla)\partial_{X}u\, \partial_{X}u\,dx=0
.\end{equation}

To the next term we apply general H\"older and Poincar\'e inequalities
\begin{equation}
III=\int_{\Omega}u^i \partial_{x_i}X^l \partial_{x_l} u^k X^s \partial_{x_s} u^k dx\leqslant \int_{\Omega}\vert u\nabla X \nabla u \partial_{X} u \vert dx\leqslant C \Vert u \Vert_{L^4}\Vert \nabla u\Vert_{L^2}\Vert\partial_{X} u\Vert_{L^4}
\end{equation}
i.e.
\begin{equation}
III\leqslant C_2 \Vert u \Vert_{H^1_0}^2\Vert\partial_{X} u\Vert_{H^1}
.\end{equation}
Summing up all the above estimates and using H\"older, Young' inequalities we get 
\begin{equation}\label{nonlinTR}
\Big\vert\int \partial_{X}\left[(u\cdot\nabla)u\right]\partial_{X} u\, dx \Big\vert \leqslant C_3 \Vert u \Vert_{H^1}^2\Vert\partial_{X} u\Vert_{H^1} \leqslant C_4 \Vert f \Vert_{L^2}^4+ C_5 \epsilon\Vert\partial_{X} u\Vert_{H^1}^2 
.\end{equation}
From Lemma IX 1.2 [\ref{galdi_book}] and the same way as in (\ref{3.10}) we deduce that there exists a uniquely determined $\partial_{X} p\in L^2 (\Omega)$ for Navier-Stokes system.  

The estimate (\ref{3.22}) for propagated Stokes equation combined with (\ref{PropNS}) give us an estimate 
\begin{equation}
C \Vert \nabla \partial_{X} u\Vert_{L^2}^2\leqslant C_6 \Vert f\Vert_{L^2}^2 +C_7 \Vert \partial_{X} f\Vert_{L^2}^2 +C_{4}\Vert f\Vert_{L^2}^4 +C_8 \epsilon \Vert \nabla \partial_{X} u \Vert_{L^2}^2   
.\end{equation}
Taking $\epsilon$ in such a way that $C >C_8 \epsilon $, we have (\ref{4.1}).

\end{proof}

The higher tangential regularity is an important part of the proof of Theorem \ref{thm_main}. It gives us higher regularity of solutions of Navier-Stokes problem (\ref{NS})-(\ref{NSdiv}). The following lemma states the result 
\begin{lemma}\label{higtan} Let $\Omega \subset \mathbb{R}^2 $ be an open bounded domain of class $C^2$, $S\subset \Omega$ with the boundary $\partial{S}\in C^2 $, and
let $X$ be a vector field satisfying (\ref{X}). Assume 
$f, \partial_X f, \partial^2_X f \in L^2 (\Omega)$, and let $u$ solve (\ref{NS})-(\ref{NSdiv}). Then  $\partial_{X}^2 u\in H^1(\Omega)$, with an estimate
\begin{equation}\label{4.2}
 \Vert \nabla \partial_{X}^2 u\Vert_{L^2}\leqslant C \Big( \Vert f\Vert_{L^2}+ \Vert \partial_X f \Vert_{L^2} +\Vert\partial^2_X f\Vert_{L^2}+\Vert f \Vert_{L^2}^2 + \Vert \partial_{X} f\Vert_{L^2}^2 + \Vert f\Vert_{L^2}^4 \Big)
,\end{equation}
where $C:=C(\Omega ,X)$.
\end{lemma}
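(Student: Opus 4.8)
\noindent\emph{Proof idea.} The plan is to differentiate the stationary Navier--Stokes system \eqref{NS}--\eqref{NSdiv} twice along $X$ and to read the resulting equation as the twice-propagated Stokes equation \eqref{eq:gen2_eq} carrying the additional forcing $\partial_X^2 f-\partial_X^2\big[(u\cdot\nabla)u\big]$. In this way the entire linear apparatus from the proof of Lemma~\ref{l_hihgerRegSt} transfers without change: the Bogovskii-type control \eqref{bogov3} of the pressure term $\partial_X^2 p$, the weak identity \eqref{eq:gweak3_eq} tested with $v=u$, the divergence formula \eqref{div2}, and Korn's inequality \eqref{eq:korn3_eq}. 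The only genuinely new work is to estimate the nonlinear contribution and to feed it into this scheme.

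First I would expand, just as in the proof of Lemma~\ref{l_PropNS},
\[\partial_X^2\big[(u\cdot\nabla)u\big]=(\partial_X^2 u\cdot\nabla)u+(u\cdot\nabla)\partial_X^2 u+2(\partial_X u\cdot\nabla)\partial_X u+R[X,u],\]
where $R[X,u]$ collects the lower-order terms, each of which carries a factor $\nabla X$ or $\nabla^2 X$ together with $u$ or $\partial_X u$ and a single first-order derivative. Testing the differentiated equation with $\partial_X^2 u\in H^1_0(\Omega)$ --- note $\partial_X^2 u\vert_{\partial\Omega}=0$, as in \eqref{bc} --- the term $(u\cdot\nabla)\partial_X^2 u$ drops out because $b(u,\partial_X^2 u,\partial_X^2 u)=0$ for divergence-free $u$, exactly as the term $II$ vanished in the proof of Lemma~\ref{l_PropNS}.

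Next I would estimate the surviving nonlinear terms with the two-dimensional tools already used above: H\"older's inequality, the interpolation bound $\Vert w\Vert_{L^4}^2\leqslant C\Vert w\Vert_{L^2}\Vert\nabla w\Vert_{L^2}$ (equivalently the $(L^2,BMO)$ interpolation \eqref{interpolation}), Poincar\'e's inequality --- recall $\partial_X u,\partial_X^2 u\in H^1_0(\Omega)$ and $\Vert\partial_X^2 u\Vert_{L^2}\leqslant C\Vert\nabla\partial_X u\Vert_{L^2}$ --- and Young's inequality with a small parameter. For the leading term this gives
\[\Big|\int_\Omega(\partial_X^2 u\cdot\nabla)u\cdot\partial_X^2 u\,dx\Big|\leqslant C\Vert\nabla u\Vert_{L^2}\Vert\partial_X^2 u\Vert_{L^4}^2\leqslant C\Vert\nabla u\Vert_{L^2}\Vert\partial_X^2 u\Vert_{L^2}\Vert\nabla\partial_X^2 u\Vert_{L^2}\leqslant\epsilon\Vert\nabla\partial_X^2 u\Vert_{L^2}^2+C(\epsilon)\Vert\nabla u\Vert_{L^2}^2\Vert\nabla\partial_X u\Vert_{L^2}^2,\]
and the term $2(\partial_X u\cdot\nabla)\partial_X u$ is treated the same way, contributing (after Young) a small multiple of $\Vert\nabla\partial_X^2 u\Vert_{L^2}^2$ plus $C(\epsilon)\Vert\nabla\partial_X u\Vert_{L^2}^4$; the remainder $R[X,u]$ only involves $u$, $\partial_X u$ and their first derivatives in $L^2$ or $L^4$ and is bounded by H\"older, Poincar\'e and Young, again leaving a small multiple of $\Vert\nabla\partial_X^2 u\Vert_{L^2}^2$. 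The energy estimate \eqref{2.3} then controls $\Vert\nabla u\Vert_{L^2}$ and Lemma~\ref{l_PropNS}, estimate \eqref{4.1}, controls $\Vert\nabla\partial_X u\Vert_{L^2}$.

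Finally I would assemble the pieces: insert \eqref{bogov3}, \eqref{div2} and the nonlinear bounds into \eqref{eq:gweak3_eq} with $u=v$, choose $\epsilon$ small enough that all terms proportional to $\Vert\nu(x)\mathbf{D}(\partial_X^2 u)\Vert_{L^2}^2$ and to $\Vert\nabla\partial_X^2 u\Vert_{L^2}^2$ are absorbed on the left, and close the estimate with Korn's inequality \eqref{eq:korn3_eq}. The linear part reproduces \eqref{ineq:gencase}; the nonlinear part, after substituting \eqref{4.1} and \eqref{2.3}, is a polynomial in $\Vert f\Vert_{L^2}$ and $\Vert\partial_X f\Vert_{L^2}$ inside the bound for $\Vert\nabla\partial_X^2 u\Vert_{L^2}^2$, and on taking square roots and applying Young's and AM--GM inequalities termwise (for instance $\Vert f\Vert_{L^2}^3\leqslant\tfrac12\Vert f\Vert_{L^2}^2+\tfrac12\Vert f\Vert_{L^2}^4$ and $\Vert f\Vert_{L^2}\Vert\partial_X f\Vert_{L^2}\leqslant\tfrac12\Vert f\Vert_{L^2}^2+\tfrac12\Vert\partial_X f\Vert_{L^2}^2$) every surviving term collapses to a multiple of $\Vert f\Vert_{L^2}^2+\Vert\partial_X f\Vert_{L^2}^2+\Vert f\Vert_{L^2}^4$, which, together with the linear contribution, is the right-hand side of \eqref{4.2}. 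I expect the main obstacle to be precisely this bookkeeping: tracking which powers of $\Vert f\Vert_{L^2}$ and $\Vert\partial_X f\Vert_{L^2}$ the nonlinearity produces, confirming that each of them is dominated by $\Vert f\Vert_{L^2}^2$, $\Vert\partial_X f\Vert_{L^2}^2$ and $\Vert f\Vert_{L^2}^4$, and making sure the successive $\epsilon$-absorptions --- one coming from the pressure estimate \eqref{bogov3} and one from each nonlinear term --- are mutually compatible so that the coefficient surviving on the left after Korn's inequality stays strictly positive.
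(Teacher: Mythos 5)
Your proposal is correct and follows essentially the same route as the paper: differentiate the system twice along $X$, reuse the linear machinery of Lemma~\ref{l_hihgerRegSt} (Bogovskii bound for $\partial_X^2 p$, the weak identity tested with $\partial_X^2 u$, Korn), kill $(u\cdot\nabla)\partial_X^2 u$ by the trilinear-form cancellation, and control the remaining nonlinear terms by the two-dimensional $L^4$ interpolation together with \eqref{2.3} and Lemma~\ref{l_PropNS} before absorbing the small-$\epsilon$ contributions. The only cosmetic difference is that you invoke the Ladyzhenskaya form $\Vert w\Vert_{L^4}^2\leqslant C\Vert w\Vert_{L^2}\Vert\nabla w\Vert_{L^2}$ where the paper uses the equivalent $(L^2,BMO)$ interpolation \eqref{interpolation2}; the resulting bookkeeping of powers of $\Vert f\Vert_{L^2}$ and $\Vert\partial_X f\Vert_{L^2}$ matches the paper's intermediate estimate \eqref{4.21}.
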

\begin{proof}
Consider 
\begin{align}\label{higtanNS}
\partial_{X}^2\left[ (u\cdot\nabla)u -{\rm div\,}\left[\nu(x)\mathbf{D} u\right] +\nabla{p}\right] =\partial_{X}^2f\\
\partial_{X}^2 {\rm div\,}u=0
.\end{align}

In order to prove the main estimate we just take the second directional derivative from the nonlinear term of (\ref{NS}), the rest has been proved in Lemma \ref{l_hihgerRegSt}. 
\begin{equation}\label{nonlin}
\begin{split}
\partial_{X}^2\left[(u\cdot\nabla)u\right]&=\partial_{X}\biggl[ (\partial_{X} u\cdot \nabla)u+( u\cdot \nabla)\partial_{X}u-u^i \partial_{x_i}X^l \partial_{x_l} u^k \biggr]\\
&=(\partial_{X}^2 u\cdot \nabla)u+2(\partial_{X} u\cdot \nabla)\partial_{X}u-2 u \cdot \nabla X \cdot\nabla u+( u\cdot \nabla)\partial_{X}^2 u \\
&-\partial_{X} u\cdot \nabla X \cdot\nabla u- u \cdot X \nabla^2 X \cdot\nabla u - u \cdot\nabla X \cdot\nabla \partial_{X} u + u ( \nabla X )^2 \nabla u
.\end{split} 
\end{equation}
We test the above expression  by  $\partial_{X}^2 u$ and consider more precisely the most problematic one 
\begin{equation}\label{II}
I=\int_{\Omega} (\partial_{X}^2 u\cdot \nabla)u\,\partial_{X}^2 u \,dx \leqslant \Vert\partial_{X}^2 u\Vert^2_{L^4}\Vert \nabla u\Vert_{L^2}
.\end{equation}
According to (\ref{interpolation}), we have
\begin{equation}\label{interpolation2}
\Vert\partial_{X}^2 u\Vert_{L^4}\leqslant C \Vert \partial_{X}^2 u\Vert^{\frac{1}{2}}_{L^2}\Vert\partial_{X}^2 u\Vert^{\frac{1}{2}}_{BMO}
,\end{equation}
where 
\[\Vert\partial_{X}^2 u \Vert_{L^2(\Omega)} \leqslant C \Vert \nabla \partial_{X}u \Vert_{L^2(\Omega)}.\]
We implement (\ref{interpolation}) to (\ref{II}) and get that
\begin{equation}
\begin{split}
I&\leqslant C_1 \Vert u\Vert_{H^1}\Vert\nabla \partial_{X} u\Vert_{L^2}\Vert\partial_{X}^2 u\Vert_{BMO} \leqslant C_2 \Vert u\Vert_{H^1}\Vert\partial_{X} u\Vert_{H^1}\Vert\partial_{X}^2 u\Vert_{BMO}\\
&\leqslant 
C_2 \Vert u\Vert_{H^1}\Vert\partial_{X} u\Vert_{H^1}\Vert\partial_{X}^2 u\Vert_{H^1}
.\end{split}
\end{equation}

From the assumptions given vector field is smooth and we know that $u\in H^1(\Omega)$, $\partial_{X} u \in H^1(\Omega$). So without loss of generality, we could estimate the rest terms of (\ref{nonlin})
\begin{equation}
\begin{split}
\int_{\Omega}\partial_{X}^2\left[(u\cdot\nabla)u\right]\partial_{X}^2 u \, dx & \leqslant  C_3 \Vert u\Vert_{H^1}\Vert\partial_{X} u\Vert_{H^1}\Vert\partial_{X}^2 u\Vert_{H^1}+C_4 \Vert u\Vert_{H^1}^2 \Vert\partial_{X}^2 u\Vert_{H^1}\\
&+ C_5 \Vert\partial_{X} u\Vert_{H^1}^2\Vert\partial_{X}^2 u\Vert_{H^1}\\
& \leqslant (C_6 \Vert u\Vert_{H^1}^2 + C_7 \Vert\partial_{X} u\Vert_{H^1}^2)\Vert\partial_{X}^2 u\Vert_{H^1}
.\end{split} 
\end{equation}

Using Young's inequality with small $\epsilon$ and Lemma \ref{l_PropNS}, we get 
\begin{equation}
\begin{split}
\int_{\Omega}\partial_{X}^2\left[(u\cdot\nabla)u\right]\partial_{X}^2 u \, dx & \leqslant C_8 \Vert f \Vert_{L^2}^4 + C_{9} \Vert \partial_{X} f\Vert_{L^2}^4 +C_{10} \Vert f\Vert_{L^2}^8+\epsilon\Vert\partial_{X}^2 u\Vert_{H^1}^2 
.\end{split}
\end{equation}
From Lemma IX 1.2 [\ref{galdi_book}] and the same way as in (\ref{fcl_2HR}) we deduce that there exists a uniquely determined $\partial_{X}^2 p\in L^2 (\Omega)$ for Navier-Stokes system.

The estimate (\ref{3.62}) from the last step of the proof of Lemma \ref{higtan} combined with the above estimate for nonlinear term  give us an estimate of (\ref{higtanNS})
\begin{equation}\label{4.21}
\begin{split}
C \Vert \nabla \partial_{X} u\Vert_{L^2}^2 &\leqslant B_2\Vert f\Vert_{L^2}^2+F_2 \Vert \partial_X f \Vert_{L^2}^2 +A_{2}\Vert\partial^2_X f\Vert_{L^2}^2 + \epsilon \Vert \nabla \partial_{X} u \Vert_{L^2}^2\\
& +C_8 \Vert f \Vert_{L^2}^4 + C_{9} \Vert \partial_{X} f\Vert_{L^2}^4 +C_{10} \Vert f\Vert_{L^2}^8  
.\end{split}
\end{equation}
%Let us denote the RHS of the above  \[F:=B_2\Vert f\Vert_{L^2}+F_2 \Vert \partial_X f \Vert_{L^2} +A_{2}\Vert\partial^2_X f\Vert_{L^2}+C_8 \Vert f \Vert_{L^2}^2 + C_{9} \Vert \partial_{X} f\Vert_{L^2}^2 +C_{10} \Vert f\Vert_{L^2}^4 \]
Taking $\epsilon$ in (\ref{4.21}) in such a way that $C > \epsilon $, we obtain (\ref{4.2}).

\end{proof}

\subsection{Proof of Theorem 2.3}
In order to prove Theorem \ref{thm_main} we need results stated in subsection 4.1. We assume that the approximate solution of problem (\ref{NS})-(\ref{NSdiv}) have tangential regularity (Lemma \ref{l_PropNS}) and higher tangential regularity (Lemma \ref{higtan}).   
\begin{proof} 
In general, we want to change the global system of coordinates $(x_1 ,x_2)$ to the local normal and tangent vector coordinates system    $(\tau , n)$ on $\partial S$. The tangent vector direction becomes as $y_1$ and the normal vector takes direction $y_2$. The most problematic part is to transfer the Navier-Stokes equations from the closed domain into the whole space.

 Thus consider, that Navier-Stokes equation is given in the neighbourhood $\Sigma$  of the boundary $\partial S$ s.t. there are open domains $S\subset S^{\prime} $ and $S^{\prime \prime}\subset S $ that is  $dist(x, x^{\prime})=\delta$ for $x\in \partial S$, $x^{\prime}\in \partial S^{\prime}$, and also $dist(x,x^{\prime \prime})=\delta $ for  $x\in \partial S$, $x^{\prime \prime}\in \partial S^{\prime \prime}$. Let us denote  the neighbourhood of $\partial S$ that is $S^{\prime}\setminus S^{\prime \prime}:= \Sigma$, and $\Omega^{(1)}:= \Omega \setminus (S\cup \Sigma )$, $\Omega^{(2)}:= S\setminus \Sigma$ (Fig.\ref{fig:Flow2}).  
 Without loss of generality, consider  $ \mathcal{O}: = W   \cap \Sigma  $ where $W $ is a plane that intersects with a part of  $\Sigma$. Let us fix $\epsilon$ and introduce notations
 \begin{equation}\label{omega1E}
 \Omega^{(1)}_{\epsilon}:=\lbrace x\in\Sigma\cup \Omega^{(1)} :\,\, dist(x,\Omega^{(1)})<\epsilon \rbrace
 \end{equation}
 \begin{equation}\label{omega2E}
 \Omega^{(2)}_{\epsilon}:=\lbrace x\in\Sigma\cup \Omega^{(2)} :\,\, dist(x,\Omega^{(2)})<\epsilon \rbrace
 .\end{equation}

\begin{figure}[h!]
 \centering
  \includegraphics[width=80mm]{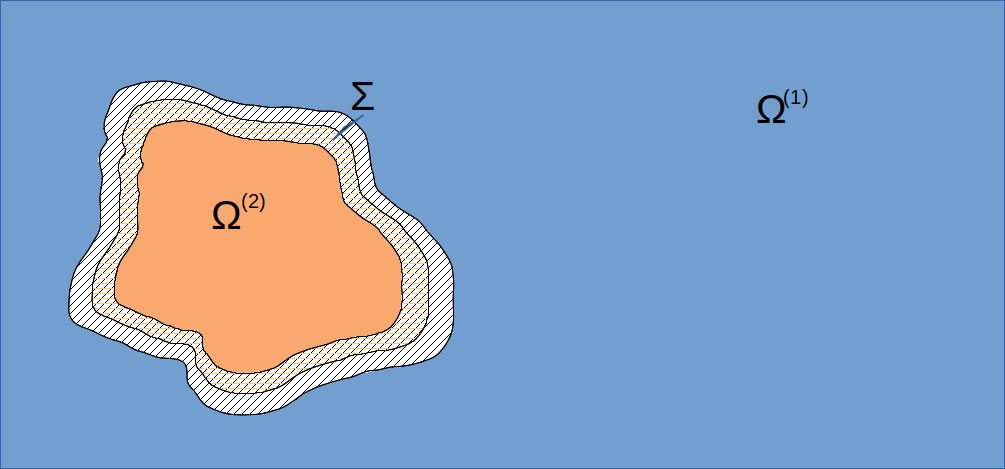}
  \caption{$\Sigma  $ neighborhood of the boundary of $S$.}
  \label{fig:Flow2}
\end{figure}

 There is a $\Phi$ - $C^2 $ diffeomorphism from $\mathcal{O}$ onto rectangle $V$, which is straightening out the boundary of obstacle $S$. In this regard, we apply a classical change of variables for the curvilinear system of coordinates (Fig.\ref{fig:Flow3}).

 \begin{figure}[h!]
 \centering
  \includegraphics[width=130mm]{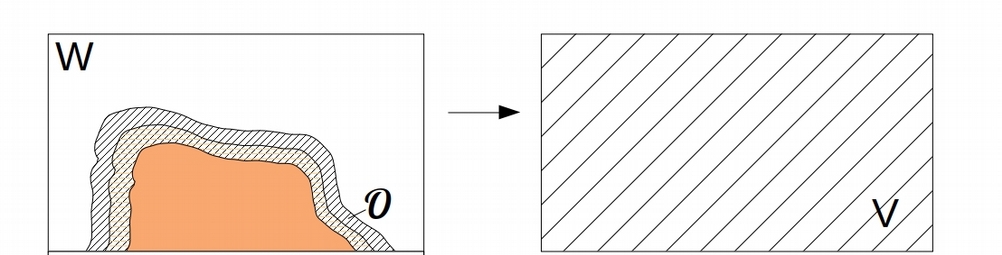}  \caption{Change of the system of coordinates. }
  \label{fig:Flow3}
\end{figure}

%In this section we show that problem (1.1) is transformed when we localize it and straighten the fracture. As $\partial S$ is of class $C^2 (\mathbf{R}^2)$, we deduce that, for every $x_0\in \partial S$ , there exist $\delta^{\prime}>0$, an open neighborhood $V^{\prime}$ of $0$ in $\mathbf{R}^2$ and a $C^2$-diffeomorphism $\Phi:B(x_0 ,\delta^{\prime})\longrightarrow V{\prime }: x\mapsto y $ such that $\Phi (x_0 )=0$ and $\nabla \Phi(0)=Id$ (Lemma 4.5.3 Evans)\\
%$\Phi (B(x_0 ,\delta^{\prime})\cap\partial S )=\left\lbrace (0,y_2)\,|\,y_2\in \mathbf{R}^{-}\right\rbrace \cap V^{\prime}=: \mathcal{O}$, in case $x_0$ is interior point of $S$,\\
%$\Phi (B(x_0 ,\delta\prime)\cap\partial S )=\left\lbrace (0,y_2)\,|\,y_2\in \mathbf{R}^{+}\right\rbrace \cap V^{\prime}=: \mathcal{O}$,in case $x_0$ is an extremity \\  
%By the domain-straightening theorem, we konw that, given vector field $Y=\sum_i X_i \frac{\partial}{\partial_{x_i}}$ on a manifold there exists local coordinates $y_1 ,y_2 ,..., y_n$ such that $Y=\frac{\partial}{\partial}{y_1}$. 

Change of variables takes form

\[y_i =\Phi_i (x),\,\, x_i =\Psi_i (y)\]
such that
\[ J_{\Psi}=\left( \frac{\partial \Psi_{i}}{\partial y_j}\right)_{i,j}\]
\[J_{\Phi}=\left( \frac{\partial \Phi_{i}}{\partial x_j}\right)_{i,j},\]
 with ${\rm det\,} J_{\Psi}(y)=1,\,\,\, \forall y \in \mathbb{R}^2 $. 
\[u(x)= J_{\Psi}(\Phi(x))U(\Phi(x))\]
\begin{equation}\label{4.17}
U(y)=J_{\Phi}(\Psi(y))u(\Psi(y))
,\end{equation}
i.e.,
\[U_i(y)=\sum_{j=1}^{2} \frac{\partial \Phi_i}{\partial x_j}u_j (\Psi (y))\]

%\[{\rm div_y\,}U(y)={\rm div_y\,}u(\Psi(y)) \]
\[P(y)=p(\Psi(y)).\]
%?\[  \partial_{y_j} U = \frac{\partial U_i}{\partial y_j }+ \Gamma^i_{j k}U_k .\]
%We are in $2D$ space with Euclidean topology, so we could represent velocity %vector field by stream function that is 
%$$ U(y):= \Big[  \frac{\partial \phi(y)}{\partial y_2};-\frac{\partial %\phi(y)}{\partial y_1}\Big]$$ 
%It follows that ${\rm div_{y} \,}U=0.$

The derivative along tangential field in the curvilinear system takes form
\[ \partial_{Y}:=\partial_{y_1} .\]

The jump of the viscosity field is transferred  along $y_2$ direction, so we get the dependence 
 \begin{equation}
\nu(y_2)= \begin{cases}
1, & y_2 >0\\
m, & y_2 <0
.\end{cases}
\end{equation}

We define the second order derivative operator as 
\[ \left[  \mathcal{L} U \right] _i=\sum_{j}  \frac{\partial}{\partial y_j }\left\lbrace\nu(y_2) \sum_{k}g^{jk}\frac{\partial U_i}{\partial y_k }+ 2 \nu(y_2)\sum_{l,k}g^{kj}\Gamma^i_{l k}U_l \right\rbrace, \]
\[ \left[ \mathcal{G} P\right]_i =\sum_j g^{ij} \frac{\partial P}{\partial y_j}  ,\]
and the convection term
\[\left[  \mathcal{N} U\right] _i=\sum_j U_j \frac{\partial U_i}{\partial y_j }+\sum_{j,k+1} \Gamma^i_{jk}U_j U_k.\]
Above, $\Gamma^i_{jk} $ are Christoffel symbols
\[\Gamma^k_{ij}=\frac{1}{2}\sum_{l}g^{kl}\left\lbrace \frac{\partial g_{il}}{\partial g_j}+\frac{\partial g_{jl}}{\partial y_i }-\frac{\partial g_{ij}}{\partial y_l}\right\rbrace ,\]
with contravariant vectors tensor
\[ g^{ij}=\sum_k \frac{\partial \Phi_i}{\partial x_k}\frac{\partial \Phi_j}{\partial x_k} ,\]
and covariant vectors tensor
\[ g_{ij}=\sum_k \frac{\partial \Psi_k}{\partial y_i}\frac{\partial \Psi_k}{\partial y_j}. \]
%\[\frac{1}{\sqrt{g}}\frac{\partial}{\partial_{y_j}}\left(\sqrt{g}U_j \right)\]
So, we have

\[{\rm div_y\,}U=0  \]
 by Corollary A.3. in [\ref{danchinmucha}] .

Extending all the fields by Sobolev extension s.t. $E U = U$ in $ V$ and $E U=0$ in $\mathbb{R}^2 \setminus V^{\prime}$, $V\subset V^{\prime}$, by  Theorem II.3.3 in [\ref{galdi_book}], we get Navier-Stokes equations in the curvilinear coordinates 

\begin{equation}
-\mathcal{L}U+\mathcal{N} U + \mathcal{G} P=F\;\;\; in\;\;\; \mathbb{R}^2
\end{equation}
\[{\rm div_{y} \,}U=0.\]

We rewrite the first row of the above equation 
\begin{equation}\label{N-S1row}
\begin{split}
\nu(y_2)&\partial_{y_1}\left\lbrace g^{11} \frac{ \partial U_1}{\partial{y_1}}+g^{12} \frac{ \partial U_1}{\partial{y_2}}+2\left[ g^{11}(\Gamma ^1_{11} U_1 +\Gamma ^1_{21} U_2)+g^{21}(\Gamma ^1_{12} U_1 +\Gamma ^1_{22} U_2)\right] \right\rbrace \\
& + \partial_{y_2}\left\lbrace \nu(y_2)\left(  g^{21} \frac{ \partial U_1}{\partial{y_1}}+g^{22} \frac{ \partial U_1}{\partial{y_2}}+2\left[ g^{12}(\Gamma ^1_{11} U_1 +\Gamma ^1_{21} U_2)+g^{22}(\Gamma ^1_{12} U_1 +\Gamma ^1_{22} U_2) \right] \right)\right\rbrace\\
& -U_1 \frac{\partial U_1}{\partial{y_1}}-U_2 \frac{\partial U_1}{\partial{y_2}}-\Gamma^1_{11} U_1^2-2 \Gamma^1_{12} U_{1} U_{2}-\Gamma^1_{22} U_{2}^2- g^{11}\frac{\partial P}{\partial {y_1}}-g^{12}\frac{\partial P}{\partial {y_2}}= -F_1
.\end{split}
\end{equation}
Since $\partial S \in C^2$ and compact, we have
\[  \vert g^{ij}\vert\leqslant K_0, \,\,\,\;\;\; \,\vert\frac{\partial g^{ij}}{\partial y_j}\vert\leqslant K_1 \]
\[ \Big\vert\frac{\partial \Gamma_{jk}^{i}}{\partial y_s}\Big\vert\leqslant K_2, \,\,\,\;\;\;\,\vert\Gamma_{jk}^{i}\vert\leqslant K_3 .\]
Recall, the space dimension  $n=1$ and  $p=2$, we use an estimate for the convection term as follows
\begin{equation}
\Vert U\nabla U \Vert_{L^2 (\mathbb{R})} \leqslant  C \Vert U \Vert_{H^1 (\mathbb{R})}^2
.\end{equation}
We know $u\in H^1(\Omega)$, which implies $U\in H^1(\mathbb{R}^2)$.
By Lemma \ref{l_PropNS} and (\ref{4.17}) we deduce that $\nabla \partial_{y_1} U \in L^2(\mathbb{R}_{y_2};L^2(\mathbb{R}_{y_2})) $.
 We take $L^2(\mathbb{R}_{y_2};L^2(\mathbb{R}_{y_1})) $ norm of (\ref{N-S1row}). Because the differentiation in $y_1$ direction is well defined we transfer all terms of the LHS to the RHS, except the $2^{nd}$ one. Also, using Cauchy-Schwartz, Poincar\'e inequalities, we get              
\begin{equation}\label{4.25}
\begin{split}
\Big[\int_{L^2(\mathbb{R}_{y_2})} &\int_{L^2(\mathbb{R}_{y_1})} \Big\vert \partial_{y_2}\left\lbrace \nu(y_2)\left(  g^{21} \frac{ \partial U_1}{\partial{y_1}}+g^{22} \frac{ \partial U_1}{\partial{y_2}}+ R\right) \right\rbrace \Big\vert^2 d\,y_1 d\,y_2 \Big]^{\frac{1}{2}} \\
&\leqslant \overline{K}_0 \Big( \Vert \nu(y_2)  U_{1,1}\Vert_{L^2(\mathbb{R}_{y_2};H^1(\mathbb{R}_{y_1}))}+\Vert \nu(y_2)  U_{1,2}\Vert_{L^2(\mathbb{R}_{y_2};H^1(\mathbb{R}_{y_1}))}\Big)\\
& + \overline{K}_1 \Big( \Vert \nu(y_2)  U_{1}\Vert_{L^2(\mathbb{R}_{y_2};H^1(\mathbb{R}_{y_1}))}+\Vert \nu(y_2)  U_{2}\Vert_{L^2(\mathbb{R}_{y_2};H^1(\mathbb{R}_{y_1}))}\Big) 
 +\Vert F_1 \Vert_{L^2(\mathbb{R}_{y_2};H^1(\mathbb{R}_{y_1}))} \\
 &+(1+K_3 ) \Big( \Vert U_1 \Vert_{L^2(\mathbb{R}_{y_2};H^1(\mathbb{R}_{y_1}))}^2+\Vert U_2 \Vert_{L^2(\mathbb{R}_{y_2};H^1(\mathbb{R}_{y_1}))} \Vert U_1\Vert_{L^2(\mathbb{R}_{y_2};H^1(\mathbb{R}_{y_1}))}\Big)\\
 &+ K_0 \Vert \nabla P\Vert_{L^2(\mathbb{R}_{y_2};L^2 (\mathbb{R}_{y_1}))}+K_3 \Vert U_2\Vert^2_{L^2(\mathbb{R}_{y_2};L^2 (\mathbb{R}_{y_1}))}
,\end{split}
\end{equation}
where $ R:= 2\left[ g^{12}(\Gamma ^1_{11} U_1 +\Gamma ^1_{21} U_2)+g^{22}(\Gamma ^1_{12} U_1 +\Gamma ^1_{22} U_2)\right] $.

It's obvious that the LHS of (\ref{4.25})  $\partial_{y_2}\left\lbrace \nu(y_2)\left(  g^{21} \frac{ \partial U_1}{\partial{y_1}}+g^{22} \frac{ \partial U_1}{\partial{y_2}}+R \right) \right\rbrace   \in L^{2} (\mathbb{R}_{x_2};L^2 (\mathbb{R}_{x_1}))$.

In situation when we have $\partial_{y_1}^2 F\in L^2(\mathbb{R}_{y_2}; L^2 (\mathbb{R}_{y_1}))$ and $\nabla \partial_{y_1} P \in L^2(\mathbb{R}_{y_2}; L^2(\mathbb{R}_{y_1}))$ from Lemma \ref{higtan} it is obvious that $\partial_{y_1}^2 (U\nabla U)\in L^2(\mathbb{R}_{y_2};L^2(\mathbb{R}_{y_1}))$), we deduce that $\nabla \partial_{y_1}^2 U \in L^2(\mathbb{R}^2)$. 
If we differentiate by $y_1$ both sides of (\ref{N-S1row}), then obviously  we get that 

\begin{equation}\label{4.26}
\begin{split}
\Big\Vert \partial_{y_2}\Big\lbrace &\nu(y_2)\left(  g^{21} \frac{ \partial U_1}{\partial{y_1}}+g^{22} \frac{ \partial U_1}{\partial{y_2}}+R \right)  \Big\rbrace \Big\Vert_{L^2(\mathbb{R}_{y_2};H^1(\mathbb{R}_{y_1}))} \\
&\leqslant \overline{K}_0 \left( \Vert \nu(y_2)  U_{1,11}\Vert_{L^2(\mathbb{R}_{y_2};H^1(\mathbb{R}_{y_1}))}+\Vert \nu(y_2)  U_{1,21}\Vert_{L^2(\mathbb{R}_{y_2};H^1(\mathbb{R}_{y_1}))}\right)  \\
&+ \overline{K}_1 \left( \Vert \nu(y_2)  U_{1,1}\Vert_{L^2(\mathbb{R}_{y_2};H^1(\mathbb{R}_{y_1}))}+\Vert \nu(y_2)  U_{2,1}\Vert_{L^2(\mathbb{R}_{y_2};H^1(\mathbb{R}_{y_1}))}\right)
 +\Vert F_{1,1} \Vert_{L^2(\mathbb{R}_{y_2};H^1(\mathbb{R}_{y_1}))} \\
 &+(1+K_3 ) \left( \Vert U_{1,1} \Vert_{L^2(\mathbb{R}_{y_2};H^1(\mathbb{R}_{y_1}))}^2
 +\Vert U_{2,1}\Vert_{L^2(\mathbb{R}_{y_2};H^1(\mathbb{R}_{y_1}))} \Vert U_{1,1}\Vert_{L^2(\mathbb{R}_{y_2};H^1(\mathbb{R}_{y_1}))}\right)\\
 &+ K_0 \Vert \nabla P_{,1}\Vert_{L^2(\mathbb{R}_{y_2};L^2 (\mathbb{R}_{y_1}))}+K_3 \Vert U_{2,1} \Vert^2_{L^2(\mathbb{R}_{y_2};L^2 (\mathbb{R}_{y_1}))}
,\end{split}
\end{equation}
i.e. we have $\partial_{y_2}\left\lbrace \nu(y_2)\left(  g^{21} \frac{ \partial U_1}{\partial{y_1}}+g^{22} \frac{ \partial U_1}{\partial{y_2}}+R \right) \right\rbrace   \in L^{2} (\mathbb{R}_{x_2};H^1 (\mathbb{R}_{x_1}))$, that means
$$\nu(y_2)\left(  g^{21} \frac{ \partial U_1}{\partial{y_1}}+g^{22} \frac{ \partial U_1}{\partial{y_2}}+ R \right) \in H^1(\mathbb{R}_{y_2};H^1(\mathbb{R}_{y_1}))\hookrightarrow L^{\infty}(\mathbb{R}_{y_2};H^1 (\mathbb{R}_{y_1})).$$

We split the above norm into two parts using the triangle inequality  
\begin{equation}
\begin{split}
\Vert \nu(y_2) g^{22} U_{1,2}\Vert_{L^{\infty}(\mathbb{R}_{y_2};H^1 (\mathbb{R}_{y_1}))}&\leqslant
\Big\Vert\nu(y_2)\left(  g^{21}  U_{1,1} +g^{22} U_{1,2}+ R \right) \Big\Vert_{L^{\infty}(\mathbb{R}_{y_2};H^1 (\mathbb{R}_{y_1}))}\\
&+\Big\Vert\nu(y_2)\left(  g^{21}  U_{1,1} + R\right) \Big \Vert_{L^{\infty}(\mathbb{R}_{y_2};H^1 (\mathbb{R}_{y_1}))}
,\end{split} 
\end{equation}
and use it in (\ref{4.26}).
By embedding  $L^{\infty}(\mathbb{R}_{y_2};H^1(\mathbb{R}_{y_1}))\hookrightarrow L^{\infty}(\mathbb{R}_{y_2};L^{\infty} (\mathbb{R}_{y_1}))$, we could bound the above inequality from below and get 

\begin{equation}
\begin{split}\Vert \nu(y_2) & g^{22} U_{1,2}\Vert_{L^{\infty}(\mathbb{R}_{y_2};L^{\infty} (\mathbb{R}_{y_1}))}\leqslant 
\overline{K}_0 \left( \Vert \nu(y_2)  U_{1,11}\Vert_{L^2(\mathbb{R}_{y_2};H^1(\mathbb{R}_{y_1}))}+\Vert \nu(y_2)  U_{1,21}\Vert_{L^2(\mathbb{R}_{y_2};H^1(\mathbb{R}_{y_1}))}\right) \\
&+ \overline{K}_1 \left( \Vert \nu(y_2)  U_{1,1}\Vert_{L^2(\mathbb{R}_{y_2};H^1(\mathbb{R}_{y_1}))}+\Vert \nu(y_2)  U_{2,1}\Vert_{L^2(\mathbb{R}_{y_2};H^1(\mathbb{R}_{y_1}))}\right)
 +\Vert F_{1,1} \Vert_{L^2(\mathbb{R}_{y_2};H^1(\mathbb{R}_{y_1}))}\\
 & +(1+K_3 ) \left( \Vert U_{1,1} \Vert_{L^2(\mathbb{R}_{y_2};H^1(\mathbb{R}_{y_1}))}^2 +\Vert U_{2,1}\Vert_{L^2(\mathbb{R}_{y_2};H^1(\mathbb{R}_{y_1}))} \Vert U_{1,1}\Vert_{L^2(\mathbb{R}_{y_2};H^1(\mathbb{R}_{y_1}))}\right)\\
 &+ K_0 \Vert \nabla P_{,1}\Vert_{L^2(\mathbb{R}_{y_2};L^2(\mathbb{R}_{y_1}))}+K_3 \Vert U_{2}\Vert_{L^2(\mathbb{R}_{y_2};H^1 (\mathbb{R}_{y_1}))}\\
 &+\Vert\nu(y_2)\left(  g^{21}  U_{1,1}+ R \right) \Vert_{L^{\infty}(\mathbb{R}_{y_2};H^1 (\mathbb{R}))},
\end{split} 
\end{equation}

and it's also true that we have
\begin{equation}
\Vert U_{1,2}\Vert_{L^{\infty}(\mathbb{R}_{y_2};L^{\infty} (\mathbb{R}_{y_1}))}\leqslant \Big\Vert \dfrac{1}{\nu(y_2)  g^{22}}\Big\Vert_{L^{\infty}(\mathbb{R}_{y_2};L^{\infty} (\mathbb{R}_{y_1}))}\Vert \nu(y_2)  g^{22} U_{1,2}\Vert_{L^{\infty}(\mathbb{R}_{y_2};L^{\infty} (\mathbb{R}_{y_1}))}
,\end{equation}
 thus we have a priori estimate for $U_{1,2}$

\begin{equation}
\begin{split}
\Vert U_{1,2}&\Vert_{L^{\infty}(\mathbb{R}^2)}\leqslant \Big\Vert \dfrac{1}{\nu(y_2)  g^{22}}\Big\Vert_{L^{\infty}(\mathbb{R}^2)}
 \lbrace \overline{K}_0 \left( \Vert \nu(y_2)  U_{1,1}\Vert_{L^2(\mathbb{R}_{y_2};H^1(\mathbb{R}_{y_1}))}+\Vert \nu(y_2)  U_{1,2}\Vert_{L^2(\mathbb{R}_{y_2};H^1(\mathbb{R}_{y_1}))}\right) \\
&+ \overline{K}_1 \left( \Vert \nu(y_2)  U_{1}\Vert_{L^2(\mathbb{R}_{y_2};H^1(\mathbb{R}_{y_1}))}+\Vert \nu(y_2)  U_{2}\Vert_{L^2(\mathbb{R}_{y_2};H^1(\mathbb{R}_{y_1}))}\right)
 +\Vert F_1 \Vert_{L^2(\mathbb{R}_{y_2};H^1(\mathbb{R}_{y_1}))}\\
 & +(1+K_3 ) \left( \Vert U_1 \Vert_{L^2(\mathbb{R}_{y_2};H^1(\mathbb{R}_{y_1}))}^2 +\Vert U_2\Vert_{L^2(\mathbb{R}_{y_2};H^1(\mathbb{R}_{y_1}))} \Vert U_1\Vert_{L^2(\mathbb{R}_{y_2};H^1(\mathbb{R}_{y_1}))}\right)\\
 &+\Vert\nu(y_2)\left(  g^{21}  U_{1,1}+R \right) \Vert_{L^{\infty}(\mathbb{R}_{y_2};H^1 (\mathbb{R}_{y_1}))}+K_3 \Vert U_{2}\Vert_{L^2(\mathbb{R}_{y_2};H^1 (\mathbb{R}_{y_1}))}+ K_0 \Vert  P\Vert_{L^2(\mathbb{R}_{y_2};H^1(\mathbb{R}_{y_1}))} \rbrace
.\end{split} 
\end{equation}
So, we have that $U_{1,2}\in L^{\infty}(\mathbb{R}_{y_2};L^{\infty} (\mathbb{R}_{y_1})) $, the same is true for $U_{1,1}\in L^{\infty}(\mathbb{R}_{y_2};L^{\infty} (\mathbb{R}_{y_1}))$.

Now, we consider the second row of the Navier-Stokes  equation

\begin{equation}
\begin{split}
\nu(y_2)\partial_{y_1}&\left\lbrace g^{11} \frac{ \partial U_2}{\partial{y_1}}+g^{12} \frac{ \partial U_2}{\partial{y_2}}+2\left[ g^{11}(\Gamma ^2_{11} U_1 +\Gamma ^2_{21} U_2)+g^{21}(\Gamma ^2_{12} U_1 +\Gamma ^2_{22} U_2)\right] \right\rbrace \\
 + &\partial_{y_2}\left\lbrace \nu(y_2)\left(  g^{21} \frac{ \partial U_2}{\partial{y_1}}+g^{22} \frac{ \partial U_2}{\partial{y_2}}+2\left[ g^{12}(\Gamma ^2_{11} U_1 +\Gamma ^2_{21} U_2)+g^{22}(\Gamma ^2_{12} U_1 +\Gamma ^2_{22} U_2)\right) \right] \right\rbrace\\
& -U_1 \frac{\partial U_2}{\partial{y_1}}-U_2 \frac{\partial U_2}{\partial{y_2}}-\Gamma^2_{11} U_1^2-\Gamma^2_{22} U_2^2-2 \Gamma^2_{12} U_{1} U_{2}- g^{21}\frac{\partial P}{\partial {y_1}}-g^{22}\frac{\partial P}{\partial {y_2}}= -F_2
.\end{split}
\end{equation}

Let us denote $ R_1 := 2\left[ g^{12}(\Gamma ^2_{11} U_1 +\Gamma ^2_{21} U_2)+g^{22}(\Gamma ^2_{12} U_1 +\Gamma ^2_{22} U_2)\right] $.

From the above equation, we take the  $L^2(\mathbb{R}_{y_2};L^2(\mathbb{R}_{y_1}))$ norm and bound as in the previous case

\begin{equation}
\begin{split}
 \Big\Vert \partial_{y_2}\Big\lbrace &\nu(y_2)\left(  g^{21} \frac{ \partial U_2}{\partial{y_1}}+g^{22} \frac{ \partial U_2}{\partial{y_2}}+ R_1 \right) \Big\rbrace \Big\Vert_{L^2(\mathbb{R}_{y_2};L^2(\mathbb{R}_{y_1}))}   \\
&\leqslant  \overline{K}_0 \left( \Vert \nu(y_2)  U_{2,1}\Vert_{L^2(\mathbb{R}_{y_2};H^1(\mathbb{R}_{y_1}))}+\Vert \nu(y_2)  U_{2,2}\Vert_{L^2(\mathbb{R}_{y_2};H^1(\mathbb{R}_{y_1}))}\right)  \\
&+\overline{K}_1 \left( \Vert \nu(y_2)  U_{1}\Vert_{L^2(\mathbb{R}_{y_2};H^1(\mathbb{R}_{y_1}))}+\Vert \nu(y_2)  U_{2}\Vert_{L^2(\mathbb{R}_{y_2};H^1(\mathbb{R}_{y_1}))}\right)\\
 & +(1+K_3 ) \left( \Vert U_2 \Vert_{L^2(\mathbb{R}_{y_2};H^1(\mathbb{R}_{y_1}))}^2+\Vert U_2\Vert_{L^2(\mathbb{R}_{y_2};H^1(\mathbb{R}_{y_1}))} \Vert U_1\Vert_{L^2(\mathbb{R}_{y_2};H^1(\mathbb{R}_{y_1}))}\right)\\
& + K_0 \Vert \nabla P\Vert_{L^2(\mathbb{R}_{y_2};H^1(\mathbb{R}_{y_1}))}+\Vert F_2 \Vert_{L^2(\mathbb{R}_{y_2};H^1(\mathbb{R}_{y_1}))}+K_3 \Vert U_1 \Vert_{L^2(\mathbb{R}_{y_2};L^2(\mathbb{R}_{y_1}))}^2
.\end{split}
\end{equation}

Differentiating the above estimate by $\partial_{y_1}$ it is obvious that, $$\partial_{y_2}\left\lbrace \nu(y_2)\left(  g^{21} \frac{ \partial U_2}{\partial{y_1}}+g^{22} \frac{ \partial U_2}{\partial{y_2}}+R_1 \right) \right\rbrace \in L^2(\mathbb{R}_{y_2}; H^1(\mathbb{R}_{y_1}))\hookrightarrow L^2(\mathbb{R}_{y_2};L^{\infty}(\mathbb{R}_{y_1})).$$ Consequently, $\nu(y_2)\left(  g^{21} \frac{ \partial U_2}{\partial{y_1}}+g^{22} \frac{ \partial U_2}{\partial{y_2}}+R_1 \right) \in H^1 (\mathbb{R}_{y_2}; H^1(\mathbb{R}_{y_1}))\hookrightarrow L^{\infty}(\mathbb{R}_{y_2};L^{\infty}(\mathbb{R}_{y_1}))$.\\ 
In order to bound $U_{2,2}$, we repeat the same procedure as in the first case
\begin{equation}
\begin{split}
\Vert  U_{2,2}&\Vert_{L^{\infty}(\mathbb{R}^2)}\leqslant \Big\Vert \dfrac{1}{\nu(y_2)  g^{22}}\Big\Vert_{L^{\infty}(\mathbb{R}^2)}
\lbrace\overline{K}_0 \left( \Vert \nu(y_2)  U_{2,11}\Vert_{L^2(\mathbb{R}_{y_2};H^1(\mathbb{R}_{y_1}))}+\Vert \nu(y_2)  U_{2,21}\Vert_{L^2(\mathbb{R}_{y_2};H^1(\mathbb{R}_{y_1}))}\right) \\
+& \overline{K}_1 \left( \Vert \nu(y_2)  U_{1,1}\Vert_{L^2(\mathbb{R}_{y_2};H^1(\mathbb{R}_{y_1}))}+\Vert \nu(y_2)  U_{2,1}\Vert_{L^2(\mathbb{R}_{y_2};H^1(\mathbb{R}_{y_1}))} \right)
 +\Vert F_{2,1} \Vert_{L^2(\mathbb{R}_{y_2};H^1(\mathbb{R}_{y_1}))}\\
  +&(1+K_3 ) \left( \Vert U_{2,1} \Vert_{L^2(\mathbb{R}_{y_2};H^1(\mathbb{R}_{y_1}))}^2 +\Vert U_{2,1}\Vert_{L^2(\mathbb{R}_{y_2};H^1(\mathbb{R}_{y_1}))} \Vert U_{1,1}\Vert_{L^2(\mathbb{R}_{y_2};H^1(\mathbb{R}_{y_1}))}\right)\\
 &+ K_0 \Vert \nabla P_{,1}\Vert_{L^2(\mathbb{R}_{y_2};H^1(\mathbb{R}_{y_1}))}+K_3 \Vert U_1 \Vert_{L^2(\mathbb{R}_{y_2};H^1 (\mathbb{R}_{y_1}))}^2\\
&+\Vert\nu(y_2)\left(  g^{21}  U_{2,1}+R_1\right)  \Vert_{L^{\infty}(\mathbb{R}_{y_2};H^1 (\mathbb{R}_{y_1}))}\rbrace
.\end{split} 
\end{equation}
%Both sides of the above expression we divide by $\nu(y_2)$, and get a priory estimate for $U_{2,2}$.

%\begin{equation}
%\begin{split}
%\Vert U_{2,2} &\Vert_{L^{\infty}(\mathbb{R}_{y_2};H^1 (\mathbb{R}_{y_1}))}\leqslant 
%C\, \nu(y_2)^{-1} \lbrace \overline{K}_0 \left( \Vert \nu(y_2)  U_{2,11}\Vert_{L^2(\mathbb{R}_{y_2};H^1(\mathbb{R}_{y_1}))}\\
%&+\Vert \nu(y_2)  U_{2,21}\Vert_{L^2(\mathbb{R}_{y_2};H^1(\mathbb{R}_{y_1}))}\right) 
%+ \overline{K}_1 \left( \Vert \nu(y_2)  U_{1,1}\Vert_{L^2(\mathbb{R}_{y_2};H^1(\mathbb{R}_{y_1}))}\\
%&+\Vert \nu(y_2)  U_{2,1}\Vert_{L^2(\mathbb{R}_{y_2};H^1(\mathbb{R}_{y_1}))}\right)
% +\Vert F_{2,1} \Vert_{L^2(\mathbb{R}_{y_2};H^1(\mathbb{R}_{y_1}))}\\
%  &+(1+K_3 ) \left( \Vert U_{2,1} \Vert_{L^2(\mathbb{R}_{y_2};H^1(\mathbb{R}_{y_1}))}^2 +\Vert U_2\Vert_{L^2(\mathbb{R}_{y_2};H^1(\mathbb{R}_{y_1}))} \Vert U_{1,1}\Vert_{L^2(\mathbb{R}_{y_2};H^1(\mathbb{R}_{y_1}))}\right)\\
% &+ K_0 \Vert  P_{,1}\Vert_{L^2(\mathbb{R}_{y_2};H^1(\mathbb{R}_{y_1}))} +\Vert\nu(y_2)\left(  g^{21}  U_{2,1}+R_1\right)  \Vert_{L^{\infty}(\mathbb{R}_{y_2};H^1 (\mathbb{R}_{y_1}))} \rbrace
%.\end{split} 
%\end{equation}
 It follows that $U_{2,2}\in L^{\infty}(\mathbb{R}_{y_2};L^{\infty} (\mathbb{R}_{y_1})) $, the same is true for $U_{2,1}\in L^{\infty}(\mathbb{R}_{y_2};L^{\infty} (\mathbb{R}_{y_1}))$. So, we have that $\nabla U \in L^{\infty}(\mathbb{R}_{y_2};L^{\infty} (\mathbb{R}_{y_1}))$.
 By extension theorem and using (\ref{4.17}) that is  $C_1\Vert\nabla U \Vert \leqslant\Vert \nabla u\Vert\leqslant C_2 \Vert \nabla U\Vert $, we conclude that $\nabla u \in L^{\infty}(\mathcal{O}) $. It's obvious that for the other part of neighborhood we get the same, so we deduce that $\nabla u\in L^{\infty}(\Sigma)$, the conclusion follows.

\end{proof}

The next lemma gives a higher regularity of $u$ in the $\Omega^{(1)}\cup \Omega^{(2)}$

\begin{lemma}
Let $\Omega \subset \mathbb{R}^2 $ be an open bounded domain of class $C^2$.  And let $u$ satisfy Navier-Stokes system of equations (\ref{NS})-(\ref{NSdiv}), $q > n$. If $f \in L^{q}(\Omega)$ then

 $\nabla u\in L^{\infty}(\Omega^{(1)}\cup\Omega^{(2)})$ and there exists $p\in W^{1,q}(\Omega^{(1)}\cup\Omega^{(2)})$ such that (\ref{NS}) is satisfies a.e.
\end{lemma}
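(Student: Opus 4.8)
The plan is to exploit the fact that on each of the two regions $\Omega^{(1)}$ and $\Omega^{(2)}$ the viscosity $\nu$ from \eqref{visc} is a constant ($1$ on $\Omega^{(1)}$, $m$ on $\Omega^{(2)}$), so that, using $\mathrm{div}\,u=0$, the term $\mathrm{div}[\nu(x)\mathbf{D}u]$ collapses to $\tfrac{\nu}{2}\Delta u$ and \eqref{NS}--\eqref{NSdiv} is, on a neighbourhood of each of these regions, the classical inhomogeneous Navier--Stokes system with smooth (constant) coefficients. By Theorem \ref{thm_exis} we already have $u\in V(\Omega)\subset H^1(\Omega)$, hence by the two-dimensional Sobolev embedding $u\in L^r(\Omega)$ for every $r<\infty$; in particular $(u\cdot\nabla)u\in L^{s}_{\mathrm{loc}}$ for every $s<2$.

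Next I would run a finite bootstrap. Set $g:=f-(u\cdot\nabla)u$ and read \eqref{NS} as a Stokes system $-\tfrac{\nu}{2}\Delta u+\nabla p=g$, $\mathrm{div}\,u=0$ with constant $\nu$. Apply the classical $L^s$-regularity theory for the stationary Stokes operator (Cattabriga's estimates, see [\ref{galdi_book}]): the interior version on a finite chain of nested subdomains compactly contained in $\Sigma\cup\Omega^{(1)}$, respectively $\Sigma\cup\Omega^{(2)}$, and the up-to-the-boundary version on the portion of $\partial\Omega$ lying in $\overline{\Omega^{(1)}}$, where the homogeneous condition \eqref{eq:bc} and $\partial\Omega\in C^2$ are available. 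Each step upgrades $u$ to $W^{2,s}$ on a slightly smaller domain, hence $\nabla u\in L^{s^\ast}$ by Sobolev embedding, hence $(u\cdot\nabla)u\in L^{s'}$ for a strictly larger exponent; after finitely many iterations one reaches $(u\cdot\nabla)u\in L^{q}$ with $q>n=2$. Feeding this into the Stokes estimate once more gives $u\in W^{2,q}$ and, from $\nabla p=g+\tfrac{\nu}{2}\Delta u\in L^{q}$ together with the pressure furnished by Theorem \ref{thm_exis}, $p\in W^{1,q}$, with \eqref{NS} satisfied a.e.

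Finally, since $q>n=2$, the embedding $W^{2,q}\hookrightarrow C^{1,\alpha}$ yields $\nabla u\in L^\infty$ on the subdomains considered. The point requiring care is that $\partial\Omega^{(1)}$ and $\partial\Omega^{(2)}$ contain the interfaces $\partial S'$ and $\partial S''$, on which no boundary condition is prescribed; this is handled by carrying out the argument on the enlarged sets $\Omega^{(1)}_\epsilon$ and $\Omega^{(2)}_\epsilon$ of \eqref{omega1E}--\eqref{omega2E}, which protrude into $\Sigma$, a region on which $\nabla u\in L^\infty$ has already been established in the proof of Theorem \ref{thm_main}. Thus $\Omega^{(1)}$ and $\Omega^{(2)}$ are genuinely interior to sets on which the required bound is known, and one covers $\overline{\Omega^{(1)}}\cup\overline{\Omega^{(2)}}$ by finitely many such pieces. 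The main obstacle is the bootstrap itself --- in particular keeping the chain of subdomains on the side of the interface where $\nu$ is constant (which holds because $\Sigma$ is a full neighbourhood of $\partial S$ and hence separates it from both $\Omega^{(1)}$ and $\Omega^{(2)}$) and organising the step up to $\partial\Omega$; once the integrability of the convective term is pushed above the critical exponent $n=2$, the conclusion $\nabla u\in L^\infty(\Omega^{(1)}\cup\Omega^{(2)})$ and $p\in W^{1,q}(\Omega^{(1)}\cup\Omega^{(2)})$ is immediate.
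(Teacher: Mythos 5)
Your proposal is correct and follows essentially the same route as the paper: localize to the enlarged sets $\Omega^{(1)}_\epsilon$, $\Omega^{(2)}_\epsilon$ where $\nu$ is constant, treat the convective term as a right-hand side, bootstrap with the Cattabriga/Galdi $L^q$-estimates for the Stokes operator until $(u\cdot\nabla)u\in L^q$ with $q>n$, and conclude $u\in W^{2,q}$, $p\in W^{1,q}$ and $\nabla u\in L^\infty$ by Sobolev embedding. The paper implements the localization with explicit cut-off functions $\eta^{(1)},\eta^{(2)}$ (producing the modified data $F$, $g$ and the $1/m$-scaling on $\Omega^{(2)}_\epsilon$) rather than a chain of nested subdomains, but this is the same argument.
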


\begin{proof}
 We have
\begin{equation}\label{visc2}
\nu(x)= \begin{cases}
1, &\mathbf{x}\in \Omega^{(1)} \\
m, &\mathbf{x}\in \Omega^{(2)}
.\end{cases}
\end{equation}

We localize the problem, take "cut off" function $\eta\in C_0^{\infty}(\mathbb{R}^2)$ such that 
\begin{equation}
\eta^{(1)} = \begin{cases}
1, &\mathbf{x}\in{\Omega^{(1)}}\\
0, &\mathbf{x}\in{\mathbb{R}^2 \setminus{\Omega^{(1)}_{\epsilon}}}
.\end{cases}
\end{equation}

Putting $w =u\eta^{(1)}$, $\pi=p\eta^{(1)}$, and taking into account (\ref{visc}) we have that $w$ and $\pi$ satisfies the following problem

\begin{equation}\label{eq:stokes_eq}
-{\rm div\,}\left[\mathbf{D }w\right] +u \cdot\nabla w+\nabla{\pi}=F\,\;\;\; in \,\, \Omega^{(1)}_{\epsilon}
\end{equation}

\begin{equation}\label{eq:div_eq}
{\rm div\,}w = g
\end{equation}
and
\[w=0 \;\;\, in \;\;\, \partial \Omega^{(1)}_{\epsilon},\]
where
 
\[
F=\eta^{(1)} f+p \nabla \eta^{(1)} -u \,{\rm div\,}(\nabla\eta^{(1)})- (\mathbf{D}u)(\nabla \eta^{(1)})^{\intercal}-(\nabla \eta^{(1)} \cdot \nabla)u-(u \cdot \nabla)(\nabla\eta^{(1)})+u\cdot \nabla \eta^{(1)} u  
\]
i.e.

\begin{equation}\label{F}
F=\eta^{(1)} f+p \nabla \eta^{(1)} -u\,\Delta \eta^{(1)} -u \nabla^2 \eta^{(1)}- (\nabla^{T}u)(\nabla \eta^{(1)}) -2\nabla u \nabla \eta^{(1)}+ u\cdot \nabla \eta^{(1)} u    
\end{equation}
\[g=\nabla \eta^{(1)} \cdot u .\]

Recalling the property of $\eta$ that is
$$\vert \nabla \eta^{(1)}\vert \leqslant C .$$
In some sense, we will repeat the proof according to Galdi [\ref{galdi_book}]. We want to show that
\begin{equation}\label{regularity_up}
(u,p)\in W^{2,q}(\Omega^{(1)}_{\epsilon})\times W^{1,q}(\Omega^{(1)}_{\epsilon}) \;\;\; for\;\;\; 1<q<\infty
.\end{equation}

We know that there is    
\[u\in W^{1,2}(\Omega^{(1)}_{\epsilon})\;\;\;\;\;  (*) \]
that satisfies Navier-Stokes system (\ref{NS}).
By the Lemma IX.2.1 ([\ref{galdi_book}]) and Lemma 2.1., we deduce there is $p\in L^2(\Omega^{(1)}_{\epsilon})$.
Furthermore, the embedding theorem, that furnishes recurrence relation for the exponents s.t. $u\in L^{2 t_k\diagup (2-t_k)}(\Omega^{(1)}_{\epsilon})$,
and (\ref{F}), allows us to conclude $F\in L^{q}(\Omega^{(1)}_{\epsilon})$, $g\in W^{1,q}(\Omega^{(1)}_{\epsilon})$ for $q\in (1,2)$.

\begin{equation}\label{4.44}
\Vert F\Vert_{q}\leqslant  c_1 \Vert f\Vert_q +c_2 \Vert p\Vert_q +c_3\Vert \nabla u\Vert_{q} +c_3\Vert u\Vert_{q}^2
.\end{equation}
So that by Lemma IX.5.1 and (*) we want to prove (\ref{regularity_up}).
 Assume next $q\geq n$. Then, the (*) is satisfied for all $q\in (1,n)$, and so, by the embedding theorem, we have
\[u\in L^{\infty}(\Omega^{(1)}_{\epsilon})\cap W^{1,t}(\Omega^{(1)}_{\epsilon})\, ,\;\;\, p\in L^{t}(\Omega^{(1)}_{\epsilon})\;\;\; for\;\;\; all \;\; \; t\in (1,\infty),\] 
yielding,   
\[u\cdot \nabla u \in L^{q}(\Omega^{(1)}_{\epsilon}).\]
 From the interior estimates for the Stokes problem proved in Theorem IV.4.1 [\ref{galdi_book}] follows that $u\in W^{2,q}(\Omega^{(1)}_{\epsilon})$ and $p\in W^{1,q}(\Omega^{(1)}_{\epsilon})$.

Denote $F^{\prime}= F-u\cdot \nabla w$,
for $q\geqslant n$, $1< q <\infty$.
 By the H\"older inequality and recalling the properties of $\eta$ it follows that
\[\Vert u \cdot \nabla w\Vert_{q}\leqslant C \Vert u\Vert_{r}\Vert \nabla u\Vert_{s} \leqslant C \Vert u\Vert_{1,q}\;\;\, in \;\;\, \Omega^{(1)}_{\epsilon}.\]
So, we have $F^{\prime} \in L^{q}(\Omega^{(1)}_{\epsilon})$, using  Lemma IX.5.1 [\ref{galdi_book}], we deduce that  
$w\in W^{2,q}(\Omega^{(1)}_{\epsilon})$ , $\pi\in W^{1,q}(\Omega^{(1)}_{\epsilon})$ and satisfies an estimate
\begin{equation}
\Vert w\Vert_{2,q,\Omega^{(1)}_{\epsilon}}+\Vert \pi\Vert_{1,q,\Omega^{(1)}_{\epsilon}}\leqslant C (\Vert F\Vert_{q,\Omega^{(1)}}+ \Vert g \Vert_{1,q,\Omega^{(1)}_{\epsilon}})
\end{equation}
also, using the Theorem IX.5.1 and properties of $\eta$ we derive
\begin{equation}
\Vert u\Vert_{2,q,\Omega^{(1)}_{\epsilon}}+\Vert p\Vert_{1,q,\Omega^{(1)}_{\epsilon}}\leqslant C_1 (\Vert F\Vert_{q,\Omega^{(1)}_{\epsilon}}+ \Vert g \Vert_{1,q,\Omega^{(1)}_{\epsilon}})
,\end{equation}
also,                                                                                                                                                                                                                                                                                                                                                                                                                                                                                                                                                        we could write
\begin{equation}
\Vert \nabla u\Vert_{1,q,\Omega^{(1)}_{\epsilon}}+\Vert p\Vert_{1,q,\Omega^{(1)}_{\epsilon}} \leqslant C_1 (\Vert F\Vert_{q,\Omega^{((1)}_{\epsilon}}+ \Vert g \Vert_{1,q,\Omega^{(1)}_{\epsilon}})
.\end{equation}

For the case $q>n$, we have embedding $W^{1,q}(\Omega^{(1)}_{\epsilon})\hookrightarrow L^{\infty}(\Omega^{(1)}_{\epsilon})$, thus $\nabla u\in L^{\infty}(\Omega^{(1)}_{\epsilon})$,

 \begin{equation}\label{4.48}
\Vert \nabla u\Vert_{L^{\infty}(\Omega^{(1)}_{\epsilon})}+\Vert p\Vert_{1,q,\Omega^{(1)}_{\epsilon}}\leqslant C_2 (\Vert F\Vert_{q,\Omega^{(1)}_{\epsilon}}+ \Vert g \Vert_{1,q,\Omega^{(1)}_{\epsilon}})
.\end{equation}
 Let us now consider the other "cut off" function $\eta^{(2)}$ such that,

\begin{equation}
\eta^{(2)} = \begin{cases}
1, &\mathbf{x}\in{\Omega^{(2)}}\\
0, &\mathbf{x}\in{\mathbb{R}^2 \setminus{\Omega^{(2)}_{\epsilon}}}
.\end{cases}
\end{equation}

We localize the problem (4.2)-(4.3), and put $v =u\eta^{(2)}$, $P=p\eta^{(2)}$, using the properties of the function $\eta^{(2)}$ we get

\begin{equation}\label{eq:stokes_eq}
-m\,{\rm div\,}\left[\mathbf{D }v\right] +u \cdot\nabla v+\nabla{\pi}=F\,\;\;\; in \,\, \Omega^{(2)}_{\epsilon}
\end{equation}

\begin{equation}\label{eq:div_eq}
{\rm div\,}v = g
,\end{equation}
and
\[w=0 \;\;\, in \;\;\, \partial \Omega^{(2)}_{\epsilon}.\]

 From the bounds of the first case, using that $F\in L^q (\Omega^{(2)}_{\epsilon})$, $g\in W^{1,q}(\Omega^{(2)}_{\epsilon}) $, and using Lemma IX.5.1, Theorem IX.5.1 ([\ref{galdi_book}]) we get an estimate

\begin{equation}
\Vert \nabla u\Vert_{1,q,\Omega^{(2)}_{\epsilon}}+\frac{1}{m}\Vert p\Vert_{1,q,\Omega^{(2)}_{\epsilon}}\leqslant \frac{C_1}{m} (\Vert F\Vert_{q,\Omega^{(2)}_{\epsilon}}+ \Vert g \Vert_{1,q,\Omega^{(2)}_{\epsilon}})
\end{equation}
using the embedding to $W^{1,q}(\Omega^{(2)}_{\epsilon})\hookrightarrow L^{\infty}(\Omega^{(2)}_{\epsilon})$,
\begin{equation}
\Vert \nabla u\Vert_{L^{\infty}(\Omega^{(2)}_{\epsilon})}+\frac{1}{m}\Vert p\Vert_{1,q,\Omega^{(2)}_{\epsilon}}\leqslant \frac{C_3}{m} (\Vert F\Vert_{q,\Omega^{(2)}_{\epsilon}}+ \Vert g \Vert_{1,q,\Omega^{(2)}_{\epsilon}})
.\end{equation}
 So, from (\ref{4.44}) and (\ref{4.48}) we conclude that $\nabla u\in L^{\infty}(\Omega^{(1)}_{\epsilon}\cup\Omega^{(2)}_{\epsilon})$.

\end{proof}

 \section{Numerical simulations}
 In this section, we will illustrate Theorem \ref{limit} with some numerical simulations, and show that our approximate problem (\ref{NS}-\ref{visc}) has  a potential application in practice.
    
  We conduct two numerical experiments, where we consider a smooth obstacle (a half ball) and an obstacle with edges (a wall). We do a number of tests with increasing value of viscosity $m$ (see \ref{visc}). It turns out that for moderately high values of $m$ the solutions are almost identical to the real rigid obstacle problem.     
  We consider a rectangular channel flow problem in $\Omega\setminus S$ with fixed rigid obstacle $S$ touching the boundary of $\Omega$   ([\ref{turek}], [\ref{lang}]). We refer to it as  a "real obstacle" problem and denote the velocity by $u_r$ ( see Table \ref{table:2}). The experimental data and the geometry of channel flow are inherited from the Turek's benchmark [\ref{turek}], test case 2D-2, with two exceptions the channel's length equals  $L=1.2$, and the obstacles are of different shape and touch the boundary (Fig.\ref{fig:Flow1}-\ref{fig:Flow2}]). The experiments with half ball obstacle is illustrated in Fig.\ref{Comp_1} with $r=0.15$ and center at $(0,4;0,0)$.
And the experiments with the wall obstacle is in Fig.\ref{Comp_2} with height $h=0,16$, width $w=0,1$ and center of symmetry $x=0,4$. 
   We assume a parabolic velocity profile at the inlet and Dirichlet boundary condition on the boundary.
  We compare the solution of the real obstacle problem with the solution of the approximate problem (\ref{NS})-(\ref{visc}), where the fixed obstacle domain $S$ is filled with highly viscous fluid of viscosity $m$ (see Table \ref{table:2}).  
  
   The results have been computed with the FEniCS package [\ref{logg}] using the incremental pressure correction scheme to solve the problem ([\ref{goda}]).

 % Fig.\ref{Comp_1}-\ref{Comp_2} visualise velocity solution for described problems above and appropriate data is in table \ref{table:1}. 

\begin{figure}[!tbp]
\centering
\begin{minipage}[b]{0.45\textwidth}
\includegraphics[width=\textwidth]{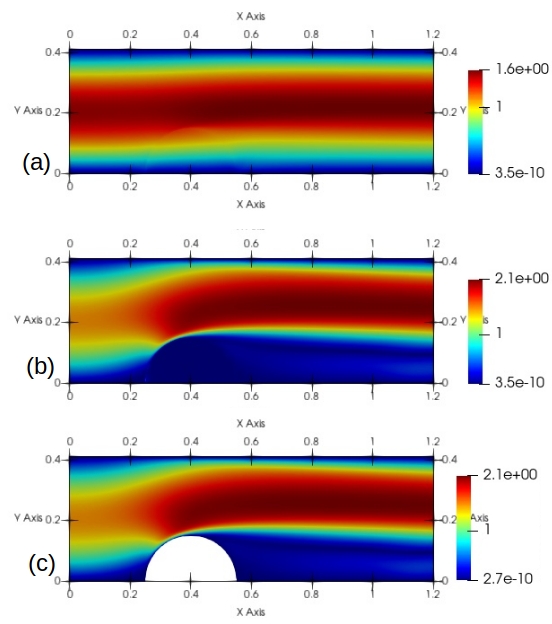}
\caption{ The magnitude of the velocity for the half ball test problem. The (a),(b) cases correspond to parameters from Table \ref{table:2}(Test cases 1,4). The (c) correspond  to the real obstacle test case. }
\label{Comp_1}
\end{minipage}
\hfill
\begin{minipage}[b]{0.45\textwidth}
\includegraphics[width=\textwidth]{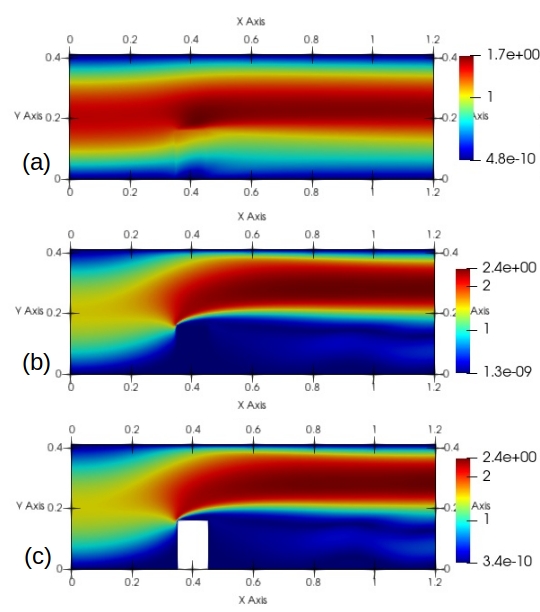}
\caption{The magnitude of the velocity for the wall test problem. The (a), (b) cases correspond to parameters from Table \ref{table:2}(Test case 2, 5). The (c) correspond to the real obstacle test case. }
\label{Comp_2}
\end{minipage}
\end{figure}

\begin{figure}[!tbp]
\centering
\begin{minipage}[b]{0.8\textwidth}
\includegraphics[width=\textwidth]{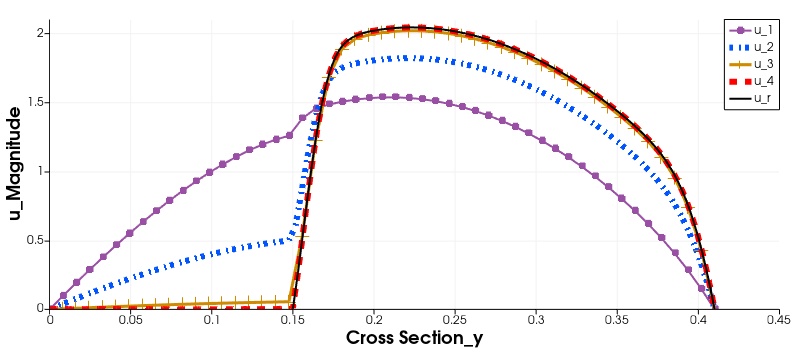}
\caption{Comparison of the velocity magnitude for the half ball case. The velocity $u_k$ correspond to test case number $k$ from Table \ref{table:2}. }
\label{Comp_3}
\end{minipage}
\end{figure}

%\begin{figure}
%\begin{minipage}[b]{0.8\textwidth}
%\includegraphics[width=\textwidth]{Comp_velVecH_b(256.jpeg}
%\caption{Comparison of velocity vector field for the half ball case. The numeration of velocity vector field correspond to numeration of testcases from Table 1, with $x$ and $y$ components.}
%\label{Comp_4}
%\end{minipage}
%\end{figure}  

\begin{figure}
\centering
\begin{minipage}[b]{0.8\textwidth}
\includegraphics[width=\textwidth]{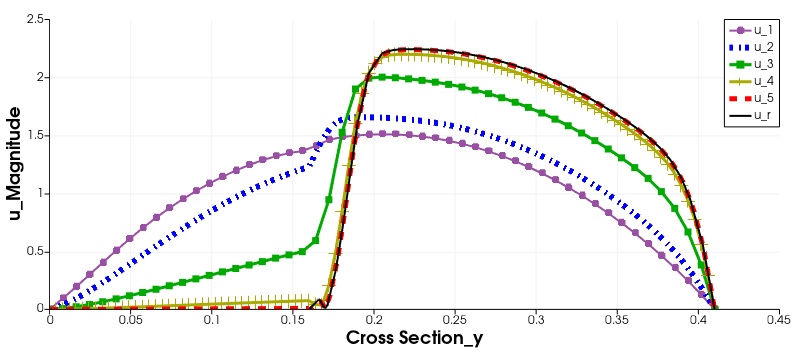}
\caption{Comparison of velocity magnitude for the wall case. The velocity $u_k$ correspond to test case number $k$ from Table \ref{table:2}. }
\label{Comp_5}
\end{minipage}
\end{figure}  
%\begin{figure}
%\begin{minipage}[b]{0.8\textwidth}
%\includegraphics[width=\textwidth]{RectVelVecComp(128).jpeg}
%\caption{Comparison of velocity vector field for the wall case. The numeration of velocity vector field correspond to numeration of test cases from Table \ref{table:2}, with $x$ and $y$ components.}
%\label{Comp_6}
%\end{minipage}
%\end{figure}

\begin{table}[h!]
\centering
\begin{tabular}{ |p{1.0cm}|p{1.5cm}| p{1.5cm}| }
\hline
Test case & Viscosity $m$ of the half ball domain  &Viscosity $m$ of the wall domain \\
\hline
1 & 10 & 10  \\
2 & 100 & 100  \\
3 & $10^3$ & $10^3$  \\
4 & $10^4 $& $10^4 $ \\
5 & - & $10^5 $ \\
\hline
\end{tabular}
\caption{Table corresponds to Fig \ref{Comp_3}-\ref{Comp_5}.}
\label{table:2}
\end{table}
  
We compare solutions in a cross-section $y$ at $x=0.4$, along the vertical axis of symmetry of the obstacle. The comparison graphs of extracted solution of each test are given in Fig.\ref{Comp_3}-\ref{Comp_5}, where the velocity $u_k$ in graphs correspond to the test case number $k$ from Table \ref{table:2}.
The Fig.\ref{Comp_3}-\ref{Comp_5} show that the solution of the approximate problem approaches the solution of the real obstacle problem relatively fast.
The solution of the approximate problem in the region $S$ is really close to the solution of the rigid obstacle problem as predicted in Theorem \ref{limit}.  

 These results establish that the regularized solutions can
approximate the limiting case reasonably well even for small-scale penalty parameter. 
 Moreover, we show that our approach has practical application in numerics.       
%could deal with different geometric structures, even with edged domains, so it 

\section{Appendix}
\subsection{Proof of Theorem 2.1}
\begin{proof}
The existence of $u$ is proved by Galerkin method (Temam [\ref{temam}]): we construct the approximate solution of (\ref{weakNS}) and then pass to the limit.

From the definition of $V$ space there exists a sequence $w_1,w_2,...,w_n...$ of linearly independent elements of $\mathcal{V}$ which is total in $V$.
 For each $n$ we define an approximate solution $u_n$ of (\ref{weakNS}) by

\begin{equation}\label{galerkin}
 u_n=\sum_{j=1}^{n}c_{jn} w_{j}\quad 
\end{equation}
with unknown coefficients $c_{jn}\in \mathbb{R} $, satisfying  
%$u_n$ solves the Galerkin system
\begin{equation}\label{NSgalerkin}
(\nu \mathbf{D}u_n,\mathbf{D}w_j)+ b(u_n,u_n,w_j)=\langle f(t),w_j\rangle
\end{equation}  
 for $j= 1,2...,n$.
The equations (\ref{galerkin}) and (\ref{NSgalerkin}) are a system of nonlinear equations for $c_{1n},...,c_{nn}$, the existence of a solution of this system follows from the Lemma 1.4 ([\ref{temam}] Ch.II, Lemma 1.4.), that is consequence of the Brouwer Fixed Point Theorem: 
\begin{lemma}
Let $X$ be finite dimensional Hilbert space with scalar product $\left[\cdot ,\cdot\right] $ and norm $\left[ \cdot\right] $ and let $P$ be a continuous mapping from $X$ into itself such that for some $k >0$
\begin{equation*}
\left[ P(\xi),\xi \right]  > 0 \,\, for \,\,\left[ \xi \right] = k .  
\end{equation*}
Then there exists $\xi \in X , \, \left[ \xi\right] \leqslant k $, such that
\begin{equation*}
P(\xi )=0.
\end{equation*}
\end{lemma}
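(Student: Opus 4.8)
The plan is to derive this from Brouwer's fixed point theorem by the classical contradiction argument on the closed ball $\overline{B}_k:=\{\xi\in X:[\xi]\leqslant k\}$. So I would assume, contrary to the conclusion, that $P(\xi)\neq 0$ for every $\xi\in\overline{B}_k$. Since $X$ is finite dimensional and $P$ is continuous, the map
\[
\Phi(\xi):=-k\,\frac{P(\xi)}{[P(\xi)]},\qquad \xi\in\overline{B}_k,
\]
is then well defined (the denominator never vanishes on $\overline{B}_k$) and continuous, because $\xi\mapsto P(\xi)$ is continuous while $\xi\mapsto[P(\xi)]$ is continuous and bounded away from $0$ on the compact set $\overline{B}_k$. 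By construction $[\Phi(\xi)]=k$ for every $\xi$, so $\Phi$ maps $\overline{B}_k$ into itself, in fact onto the sphere $\{[\xi]=k\}$.

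Next I would apply Brouwer's theorem. Choosing an orthonormal basis of $X$ identifies $(X,[\cdot,\cdot])$ isometrically with a Euclidean space $\mathbb{R}^{N}$ and $\overline{B}_k$ with a closed Euclidean ball, so the continuous self-map $\Phi$ has a fixed point $\xi_0=\Phi(\xi_0)$. Since $[\xi_0]=[\Phi(\xi_0)]=k$, the point $\xi_0$ lies on the sphere $\{[\xi]=k\}$, where the coercivity hypothesis $[P(\xi),\xi]>0$ is available.

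Finally I would pair the identity $\xi_0=\Phi(\xi_0)$ with $\xi_0$ in the scalar product:
\[
k^2=[\xi_0,\xi_0]=[\Phi(\xi_0),\xi_0]=-\frac{k}{[P(\xi_0)]}\,[P(\xi_0),\xi_0].
\]
The left-hand side is strictly positive, while by hypothesis and since $[P(\xi_0)]>0$ the right-hand side is strictly negative — a contradiction. Hence $P$ must vanish at some point of $\overline{B}_k$, which is precisely the claim.

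Since no estimate enters the argument, the only point that really needs care is the verification of the hypotheses of Brouwer's theorem: that $\Phi$ is genuinely continuous (which rests on $P$ being continuous and nonvanishing on the \emph{compact} set $\overline{B}_k$) and that $\overline{B}_k$ is homeomorphic — here even isometric — to a closed Euclidean ball. I would then use the lemma on the finite-dimensional Galerkin system (\ref{NSgalerkin}), taking $X=\mathrm{span}\{w_1,\dots,w_n\}$ and $P$ the vector field with components $[P(u_n),w_j]=(\nu\mathbf{D}u_n,\mathbf{D}w_j)+b(u_n,u_n,w_j)-\langle f,w_j\rangle$; the coercivity $[P(\xi),\xi]>0$ on a sphere of sufficiently large radius $k$ then follows from the coercivity of the viscous term (Korn's inequality) together with the cancellation $b(v,v,v)=0$, which yields the sought approximate solution $u_n$.
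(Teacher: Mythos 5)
Your proof is correct: it is the standard contradiction argument via Brouwer's fixed point theorem applied to $\Phi(\xi)=-k\,P(\xi)/[P(\xi)]$ on the closed ball, and every step (well-definedness and continuity of $\Phi$, identification of $X$ with $\mathbb{R}^N$, the fixed point landing on the sphere, and the sign contradiction $k^2<0$) checks out. The paper itself does not prove this lemma but only quotes it from Temam (Ch.~II, Lemma~1.4), and your argument is precisely the classical proof given there, so there is nothing to add.
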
 
%\begin{equation}
%\int_{\Omega} ((u_n \cdot\nabla)u_n )w_{j}d x+
%\int_{\Omega}\nu \mathbf{D}({u_n}):\mathbf{D}%(w_{j})\,d x = \int_{\Omega} f\cdot{w_j}\,d x
%.\end{equation}
We apply this lemma for proving the existence of $u_{n}$ as follows:

Let $X$ be the space spanned by $w_1,w_2,...,w_n$; the scalar product on $X$ is the scalar product induced by $V $, and $P=P_{m}$ is defined by
\[ \left[P_{m}((u),v \right]=(\nu\mathbf{D}u,\mathbf{D}v)+b(u,u,v)-(f,v),\,\forall\, u,v \in X.  \]
Let us check that scalar product $[\cdot,\cdot]$ is positive
\[\left[P_{n}(u),u\right]=\Vert\nu \mathbf{D}u\Vert^2+b(u,u,u)-(f,u) \\ =\Vert\nu \mathbf{D}u\Vert^2-(f,u)\\
\geqslant c(\nu)\Vert u\Vert^2-\Vert f\Vert \Vert u \Vert.\]
In the last inequality were used the Korn and Cauchy-Schwartz inequalities. Therefore
\begin{equation}
\left[P_{n} (u),u \right]\geqslant \Vert u \Vert(c(\nu)\Vert u \Vert-\Vert f\Vert)
.\end{equation}
It follows that $\left[ P_{n} u,u\right] > 0$ for $\Vert u \Vert=k$, and $k>\frac{1}{c(\nu)}\Vert f \Vert$. It follows that, there exists a solution $\mathbf{u}_{n}$ of (\ref{galerkin})-(\ref{NSgalerkin}).
We multiply (\ref{NSgalerkin}) by $c_{jn}$, this gives
\[\Vert\nu \mathbf{D}u_{n}\Vert_{L^2}^2 +b( u_n ,u_n ,u_n )=(f,u_n).\]
We know that, trilinear form  $b(u_n ,u_n ,u_n )=0$, and get

\begin{equation}\label{exisEst}
 \Vert\nu \mathbf{D}u_{n}\Vert_{L^2}^2 =(f,u_n )\leqslant \Vert f\Vert_{L^2} \Vert u_n \Vert_{L^2} .
\end{equation}

By Korn inequality
we get a priori estimate
\begin{equation}
\Vert u_n  \Vert_{V}\leqslant \Vert f \Vert_{L^2} 
.\end{equation}
%From (\ref{exisEst}) we could deduce that for the domain $S$ we get 
%\begin{equation}
%m\int_{S}\vert {D(\mathbf{u_n})}\vert^2 \,d\mathbf{x}\leqslant \Vert \mathbf{f}\Vert_{L^2} \Vert\mathbf{u_n }\Vert_{L^2} 
%\end{equation}
%That is, if $m\longrightarrow \infty$, then $\mathbf{D}u\longrightarrow 0$ in $S$, that were proven by [\ref{staravoitov}], [\ref{hoffman}]. Therefore, in the limit, as $m\longrightarrow \infty$, we obtain the solution to the problem on a motion of a rigid body in a viscous incompressible fluid. 

Hence the sequence remains bounded in $V$, there exists a subsequence $k\longrightarrow \infty$ such that 
$u_{n_k}\rightharpoonup u$ in $V$.
From compact embedding of $V\hookrightarrow L^2$, so we have also 
$u_{n_k}\longrightarrow u$ in $L^{2}(\Omega)$.\\
If  $u_n$ converges to $u$ in  $W^{1,2}$ weakly and  in $L^2$ strongly, then we need to show that
\[b(u_n,u_n,v)\longrightarrow b(u,u,v),\,\,\forall v\in V .\]

Then we can pass to the limit in (\ref{NSgalerkin}) with the subsequence $k\longrightarrow \infty$, we find that
\begin{equation*}
(\nu \mathbf{D}u,\mathbf{D}v)+ b(u,u,v)=\langle f(t),v\rangle
\end{equation*}
for any $v= w_1,...,\, w_n...$. The above equation is also true for any $v$ which is the linear combination of $w_1,...,\, w_n...$. Since this combination are dense in $V$, a continuity argument shows that the above equation holds for each $v\in V$ and that $u$ is a solution of (\ref{weakNS}).

From the properties of trilinear form we have 

  \[b(u_m ,u_m ,v)=-b(u_m ,v,u_m)=-\sum_{i,j=1}^{n}\int_{\Omega} u_{mi} u_{mj}\partial_{x_i }v_j\,dx.\]
  We know that $u_{mi}\rightarrow u_{i}$ converges strongly in $L^{2}(\Omega)$, since $\partial_{xi}v\in L^{\infty}(\Omega)$, so we have
  \[\int_{\Omega} u_{mi}u_{mj}\partial_{x_i }v_{j}\rightarrow \int_{\Omega} u_{i}u_{j}\partial_{x_i}v_{j}\,dx.\]
  Hence $b(u_m ,v,u_m )$ converges to $b(u,v,u)=-b(u,u,v)$.
  
\end{proof}
\begin{proof}[Proof of Theorem 2.2]
From (\ref{exisEst}) we could deduce that for the domain $S$ we get 
\begin{equation}
m\int_{S}\vert \mathbf{D}(u)\vert^2 \,d x \leqslant \Vert f\Vert_{L^2} \Vert u \Vert_{L^2} 
.\end{equation}
 That is, if $m\longrightarrow \infty$, then $\mathbf{D}u\longrightarrow 0$ in $S$. Therefore, in the limit, as $m\longrightarrow \infty$, we obtain the rigid motion. 

\end{proof}

\subsection{ Bogovskii type estimate}
 
	We need to derive some energy estimates for Stokes system of equations for the (\ref{stokes_eq}) case, to show that the velocity vector field is in Hilbert space.
	 
From Korn inequality (\ref{eq:korn_eq}) for Stokes system (\ref{stokes_eq}) we have:
\begin{equation}
\int_{\Omega} \nu (x)\vert\mathbf{D} u \vert^2 dx \geqslant c\int_{\Omega} \vert\nabla u\vert^2 dx
.\end{equation}

 Using H\"older and Poincar\'e inequalities we get:
\begin{equation}\label{2.2}
\int_{\Omega} \nu (x)\vert\mathbf{D} u \vert^2 dx \leqslant  \Vert f\Vert_{L^2} \Vert u\Vert_{L^2}\leqslant c_p\Vert f\Vert_{L^2}\Vert \nabla u\Vert_{L^2}
.\end{equation} 
We get energy estimate:
\begin{equation}\label{2.3}
\Vert \nabla u\Vert_{L^2} \leqslant  C_1\Vert f\Vert_{L^2}
.\end{equation}

The Definition \ref{def_Stokes} has no information about the pressure field. Since $u$ is a weak solution, we know from [\ref{temam}, Lemma 2.1] that there exists $p\in L^2(\Omega)$ such that 
\begin{equation}\label{eq:2.13_eq}
(\nu\mathbf{D} u,\mathbf{D}\psi)=-(f,\psi)+(p,{\rm div\,}\psi)
\end{equation}
holds for $\psi\in C_0^{\infty}(\Omega)$.
So, to every weak solution we are able to associate a pressure $p$ in such a way that \eqref{eq:2.13_eq} holds.  We formulate the following result for our case. 
\begin{lemma}\label{lem_bogovskii} Let $\Omega \subset \mathbb{R}^2 $ be an open bounded domain of class $C^2$, and let $f\in L^2 (\Omega)$. A vector field $u\in W^{1,2} (\Omega)$ satisfies \eqref{eq:1.6_eq} for all $\phi\in \mathcal{V}(\Omega)$ if and only if there exists a pressure $p\in L^2(\Omega)$ such that \eqref{eq:2.13_eq} holds for every $\psi\in C_0^{\infty} (\Omega)$. Then
\[p\in L^2 (\Omega).\]
Finally, if we normalize $p$ by the condition
\begin{equation}\label{eq:comp_eq}
\int_{\Omega}p=0
.\end{equation}
The following estimate holds  
\begin{equation}\label{eq:bogov1}
\Vert p\Vert_{L^2}\leqslant c_b\left( \Vert\nu \mathbf{D}u\Vert_{L^2}+\Vert f\Vert_{L^2}\right)
.\end{equation} 
\end{lemma}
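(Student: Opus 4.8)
The plan is to handle the three assertions in order — the equivalence ``weak solution $\Longleftrightarrow$ existence of a pressure'', the $L^2$-integrability of $p$, and the quantitative bound \eqref{eq:bogov1} — noting that only the last requires genuine work, and even that is essentially a one-line application of the Bogovskii right inverse of the divergence.

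The easy implication is immediate: if some $p\in L^2(\Omega)$ satisfies \eqref{eq:2.13_eq} for every $\psi\in C_0^\infty(\Omega)$, then by density it holds for every $\psi\in H^1_0(\Omega)$; choosing $\psi=\phi\in\mathcal V(\Omega)$ we have ${\rm div\,}\phi=0$, so the term $(p,{\rm div\,}\psi)$ vanishes and \eqref{eq:1.6_eq} follows.

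For the converse I would consider the linear functional $L(\psi):=(\nu\mathbf{D}u,\mathbf{D}\psi)+(f,\psi)$ on $H^1_0(\Omega)$. It is bounded, since $\nu\in L^\infty(\Omega)$ and $f\in L^2(\Omega)$, hence $L\in H^{-1}(\Omega)$; and by \eqref{eq:1.6_eq} it vanishes on $\mathcal V(\Omega)$, hence by continuity on its closure $V(\Omega)$. The classical de Rham/Ne\v{c}as description of the polar of $V(\Omega)$ in $H^{-1}(\Omega)$ — equivalently, surjectivity of ${\rm div\,}\colon H^1_0(\Omega)\to L^2_0(\Omega)$ for a bounded domain satisfying the cone condition ([\ref{temam}], Ch.~I, Prop.~1.1--1.2, and [\ref{galdi_book}], Thm.~III.3.1) — then yields $p\in L^2(\Omega)$, unique up to an additive constant, with $L(\psi)=(p,{\rm div\,}\psi)$ for all $\psi\in H^1_0(\Omega)$; this is precisely \eqref{eq:2.13_eq}, and in particular $p\in L^2(\Omega)$. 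Fixing the constant by the normalization \eqref{eq:comp_eq} determines $p$ uniquely.

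For the estimate I would use the Bogovskii operator. Since $\int_\Omega p=0$, $p\in L^2(\Omega)$, and $\Omega$ satisfies the cone condition, Theorem III.3.1 of [\ref{galdi_book}] supplies $w\in H^1_0(\Omega)$ with ${\rm div\,}w=p$ and $\Vert w\Vert_{H^1}\leqslant C_B\Vert p\Vert_{L^2}$. Testing \eqref{eq:2.13_eq} with $\psi=w$ gives
\[\Vert p\Vert_{L^2}^2=(p,{\rm div\,}w)=(\nu\mathbf{D}u,\mathbf{D}w)+(f,w)\leqslant\Vert\nu\mathbf{D}u\Vert_{L^2}\Vert\mathbf{D}w\Vert_{L^2}+\Vert f\Vert_{L^2}\Vert w\Vert_{L^2}.\]
Using $\Vert\mathbf{D}w\Vert_{L^2}\leqslant\Vert\nabla w\Vert_{L^2}$ and the Poincar\'e inequality $\Vert w\Vert_{L^2}\leqslant c_p\Vert\nabla w\Vert_{L^2}$ (valid because $w\in H^1_0(\Omega)$), the right-hand side is at most $(\Vert\nu\mathbf{D}u\Vert_{L^2}+c_p\Vert f\Vert_{L^2})\,C_B\Vert p\Vert_{L^2}$. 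Dividing by $\Vert p\Vert_{L^2}$ (the case $p\equiv0$ being trivial) yields \eqref{eq:bogov1} with $c_b=C_B\max\{1,c_p\}$. The only real ingredient is the existence of the Bogovskii right inverse of the divergence with the stated norm bound; since this is a cited result, I expect no serious obstacle — the rest is routine bookkeeping with H\"older and Poincar\'e inequalities.
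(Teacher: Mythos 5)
Your proposal is correct and follows essentially the same route as the paper: existence of the pressure via the classical de Rham/Temam characterization of functionals vanishing on divergence-free fields, followed by testing \eqref{eq:2.13_eq} with the Bogovskii solution of ${\rm div\,}\psi=p$ from Theorem III.3.1 of Galdi and concluding with H\"older and Poincar\'e. Your write-up is in fact slightly more complete, since you explicitly verify the trivial direction of the equivalence, which the paper omits.
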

\begin{proof} The existence of pressure $p$ follows from Temam ([\ref{temam}], Lemma 2.1). 
Let us consider the functional  
\[\mathcal{F}(\psi)=(\nu \mathbf{D}u,\mathbf{D}\psi)+(f,\psi)\]
for $\psi \in H^{1}_0(\Omega).$
 By assumption, $\mathcal{F}$ is bounded in $H^1_0 (\Omega)$ and is identically zero in $\mathcal{V}(\Omega)$. Consider the problem
\[{\rm div\,}\psi= p\]
\begin{equation}\label{eq:sys_eq}
\psi \in H^{1}_0 (\Omega)
\end{equation}
\[\Vert\psi\Vert_{H^{1}}\leqslant c_1 \Vert p\Vert_{L^2}\]
with $\Omega$ bounded and satisfying the cone condition. Since
\[\int_{\Omega}p=0,\]
from Theorem III.3.1 (in [\ref{galdi_book}]) we deduce the existence of $\psi$ solving \eqref{eq:2.13_eq}. If we replace such a $\psi$ into \eqref{eq:2.13_eq} and use \eqref{eq:comp_eq} together with the H\"{o}lder inequality and Poincar\'e inequality we have
\begin{equation}
\begin{split}
\Vert p\Vert_{L^2}^2 \leqslant\Vert\nu\mathbf{D}u\Vert_{L^2}\Vert\mathbf{D} \psi\Vert_{L^2}+\Vert f\Vert_{L^2} \Vert \psi \Vert_{L^2}\\
\leqslant\Vert\nu\mathbf{D}u\Vert_{L^2}\Vert\nabla \psi\Vert_{L^2}+\Vert f\Vert_{L^2} \Vert \psi \Vert_{L^2} \leqslant c_1 \Vert \nu\mathbf{D}u\Vert_{L^2}\Vert p\Vert_{L^2}+c_2 \Vert f\Vert_{L^2} \Vert p \Vert_{L^2}
,\end{split}
\end{equation}

\[\Vert p\Vert_{L^2} \leqslant c\Vert \nu\mathbf{D}u\Vert_{L^2}+c \Vert f\Vert_{L^2} \]
 from \eqref{eq:1.6_eq} and Young's inequality we obtain (\ref{eq:bogov1}). 
  The proof is therefore completed.
\end{proof}

\subsection{Korn Inequality}
\begin{lemma}[Korn Inequality]
Let $\Omega \subset \mathbb{R}^2 $ be an open bounded domain of class $C^2$. Then there exists constant $c>0$ such that 
\begin{equation}\label{eq:korn_eq}
\int_{\Omega} \nu (x) \vert\mathbf{D} u \vert^2 dx \geqslant c_1(\nu_{0}) \Vert u\Vert^2_{H^1 (\Omega)}  
\end{equation} 
for all $u\in V$.
\end{lemma}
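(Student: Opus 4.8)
The plan is to obtain this weighted Korn inequality by reducing it to the classical, constant-coefficient one, using only that the viscosity is bounded below away from zero: the jump of $\nu$ plays no role beyond its infimum. Let $\nu_{0}>0$ denote the essential infimum of $\nu$ on $\Omega$; in the present setting $\nu(x)\in\{1,m\}$ with $m\geqslant 1$, so $\nu_{0}=1$, and the reason for writing the constant as $c_{1}(\nu_{0})$ is to record that it is independent of the size of the jump $m$ --- which is exactly what makes the downstream energy estimates uniform in $m$. Recall also that $V(\Omega)\subset H^{1}_{0}(\Omega)$ by the classical characterisation quoted above, so $\mathcal{V}(\Omega)$, hence $C^{\infty}_{0}(\Omega)$, is available for density arguments.

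First I would discard the weight by the trivial pointwise bound
\[
\int_{\Omega}\nu(x)\,\vert\mathbf{D}u\vert^{2}\,dx\;\geqslant\;\nu_{0}\int_{\Omega}\vert\mathbf{D}u\vert^{2}\,dx .
\]
Next, for $u\in C^{\infty}_{0}(\Omega)$ an integration by parts (moving one derivative in the mixed term, which produces $\int_{\Omega}\partial_{x_i}u_j\,\partial_{x_j}u_i\,dx=\int_{\Omega}({\rm div\,}u)^{2}\,dx$) yields the elementary identity
\[
2\int_{\Omega}\vert\mathbf{D}u\vert^{2}\,dx=\int_{\Omega}\vert\nabla u\vert^{2}\,dx+\int_{\Omega}({\rm div\,}u)^{2}\,dx ,
\]
which extends to all of $H^{1}_{0}(\Omega)$ by density. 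For $u\in V$ one has ${\rm div\,}u=0$, so in fact $2\int_{\Omega}\vert\mathbf{D}u\vert^{2}\,dx=\int_{\Omega}\vert\nabla u\vert^{2}\,dx$; even dropping incompressibility, the inequality $2\int_{\Omega}\vert\mathbf{D}u\vert^{2}\,dx\geqslant\int_{\Omega}\vert\nabla u\vert^{2}\,dx$ would be enough.

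Finally I would apply the Poincar\'e inequality on the bounded domain $\Omega$: there is $c_{p}=c_{p}(\Omega)$ with $\Vert u\Vert_{L^{2}}\leqslant c_{p}\Vert\nabla u\Vert_{L^{2}}$ for $u\in H^{1}_{0}(\Omega)$, whence $\Vert u\Vert_{H^{1}(\Omega)}^{2}\leqslant(1+c_{p}^{2})\Vert\nabla u\Vert_{L^{2}}^{2}$. Chaining the three displays gives
\[
\int_{\Omega}\nu(x)\,\vert\mathbf{D}u\vert^{2}\,dx\;\geqslant\;\frac{\nu_{0}}{2}\Vert\nabla u\Vert_{L^{2}}^{2}\;\geqslant\;\frac{\nu_{0}}{2\,(1+c_{p}^{2})}\Vert u\Vert_{H^{1}(\Omega)}^{2},
\]
so the claim holds with $c_{1}(\nu_{0})=\nu_{0}/(2(1+c_{p}^{2}))$. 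I do not expect any real obstacle in this argument; the only things that need care are tracking the dependence of the constant on $\nu_{0}$ and $\Omega$ alone, and noting that the $C^{2}$ regularity of $\Omega$ and of $\partial S$ is not actually used here --- since $V\subset H^{1}_{0}$, boundedness of $\Omega$ suffices both for the first Korn identity and for Poincar\'e.
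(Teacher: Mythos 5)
Your proposal is correct and follows essentially the same route as the paper: reduce to the unweighted case via $\nu\geqslant\nu_{0}$, use the integration-by-parts identity $2\int_{\Omega}\vert\mathbf{D}u\vert^{2}\,dx=\int_{\Omega}\vert\nabla u\vert^{2}\,dx+\int_{\Omega}({\rm div\,}u)^{2}\,dx$ for compactly supported fields together with ${\rm div\,}u=0$, and finish with Poincar\'e. If anything, your write-up is the more complete one, since it makes explicit the density step, the Poincar\'e step and the dependence of the constant on $\nu_{0}$ and $\Omega$, all of which the paper leaves implicit.
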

\begin{proof} The proof is based on the results from [\ref{mucha}, Lemma 2.1]. We will prove it in general case where viscosity is bounded $\nu(x)\geq \nu_{0}$, where $\nu_{0}$ is a fixed constant. 
Without loss of generality we could assume that $u\in C^{2}_{c} (\Omega) $. Consider,   
\begin{equation}\label{eq:korn}
\int_{\Omega} \left( \mathbf{D}u\right) ^2 dx=\frac{1}{2}\int_{\Omega}\sum_{i,j=1}^{2}(u^i_{,j}+u^j_{,i})^2 dx=\Vert\nabla u\Vert_{L^2}^2+\int_{\Omega}\sum_{i,j=1}^{2}u^i_{,j}u^j_{,i} dx
.\end{equation}

Integration by parts of the last term on the RHS of (\ref{eq:korn_eq}) gives us

\begin{equation}\label{eq:2.2_eq}
\int_{\Omega}\sum_{i,j=1}^{2}u^i_{,j}u^j_{,i}=\int_{\Omega}\sum_{i,j=1}^{2}u^i_{,i}u^j_{,j}+\int_{\partial \Omega}\sum_{i,j=1}^{2}\left(u^i_{,j}u^j n_i -u^i_{,i}u^j n_j \right) 
.\end{equation}
As $u$ is compactly supported in $\Omega$, it follows that the second term of the RHS of (\ref{eq:2.2_eq}) is zero, so we get (\ref{eq:korn}). 
 
\end{proof}

\textbf{Acknowledgements.} The author would like to thank prof. Piotr Mucha, dr. Piotr Krzyzanowski and dr. Tomasz Piasecki for their invaluable help and remarks through the process of creating the paper.

\end{document}